\documentclass[10pt, english,righttag, intlim]{amsart}
\usepackage{amsmath, amssymb, amsthm, fancyhdr, verbatim, graphicx}
\usepackage{enumerate}
\usepackage{enumitem}
\usepackage[all]{xy}
\usepackage{accents}
\usepackage{mathtools}
\usepackage{bbm} 
\usepackage{extarrows}
\usepackage{stmaryrd}
\usepackage{array}
\usepackage{float}
\usepackage[OT2, T1]{fontenc}
\usepackage[unicode=true]{hyperref}
\usepackage{framed}
\usepackage{url}
\usepackage{scalerel}
\usepackage{booktabs}
\setlength{\heavyrulewidth}{1.5pt} 
\setlength{\abovetopsep}{4pt}

\usepackage{bbding}
\usepackage{pifont}

\usepackage[usenames,dvipsnames]{xcolor}
\usepackage{mathrsfs}
\usepackage{caption}
\usepackage{tikz-cd}

\definecolor{DresdenDarkGreen}{HTML}{576A4F}

\definecolor{RedOcher}{HTML}{8f5b57}

\definecolor{Azurite}{HTML}{274953}


\newcommand{\R}{\mathbb{R}}
\renewcommand{\C}{\mathbb{C}}
\newcommand{\G}{\mathbb{G}}
\newcommand{\Q}{\mathbb{Q}}
\newcommand{\Z}{\mathbb{Z}}
\renewcommand{\P}{\mathbb{P}}

\newcommand{\B}{\mathbb{B}}
\newcommand{\N}{\mathbb{N}}


\newcommand{\GG}{\mathcal{G}}
\newcommand{\OO}{\mathcal{O}}
\newcommand{\HH}{\mathcal{H}}


\newcommand{\g}{\mathfrak{g}}

\newcommand{\fa}{\mathfrak{a}}

\newcommand{\fg}{\mathfrak{g}}

\newcommand{\fH}{\mathfrak{H}}

\newcommand{\gl}{\mathfrak{gl}}
\renewcommand{\sl}{\mathfrak{sl}}


\newcommand{\sG}{\mathscr{G}}
\newcommand{\sH}{\mathscr{H}}

\newcommand{\sM}{\mathscr{M}}

\newcommand{\cA}{\mathcal{A}}

\newcommand{\cG}{\mathcal{G}}
\newcommand{\cH}{\mathcal{H}}
\newcommand{\cI}{\mathcal{I}}

\newcommand{\cM}{\mathcal{M}}
\newcommand{\cN}{\mathcal{N}}

\newcommand{\cP}{\mathcal{P}}

\newcommand{\cU}{\mathcal{U}}
\newcommand{\cV}{\mathcal{V}}
\newcommand{\cW}{\mathcal{W}}

\newcommand{\Qp}{\mathbb{Q}_p}
\newcommand{\Ql}{\mathbb{Q}_{\ell}}
\newcommand{\Zp}{\mathbb{Z}_{p}}
\newcommand{\Zl}{\mathbb{Z}_{\ell}}

\newcommand{\Qpbar}{\ol{\mathbb{Q}}_{p}}

\newcommand{\wt}[1]{\widetilde{#1}}

\newcommand{\Gal}{\operatorname{Gal}}

\newcommand{\ul}[1]{\underline{#1}}
\newcommand{\ol}[1]{\overline{#1}}
\newcommand{\wh}[1]{\widehat{#1}}

\DeclareMathOperator{\proet}{pro\acute{e}t}

\makeatletter
\newcommand*\bigcdot{\mathpalette\bigcdot@{.7}}
\newcommand*\bigcdot@[2]{\mathbin{\vcenter{\hbox{\scalebox{#2}{$\m@th#1\bullet$}}}}}
\makeatother

\def\ocirc#1{\accentset{\circ}{#1}}
\newcommand{\modifier}[1]{}

\newcommand{\bs}{\backslash}
\newcommand{\surj}{\twoheadrightarrow}

\newcommand{\dirlim}{\varinjlim}
\newcommand{\invlim}{\varprojlim}


\newlist{renumerate}{enumerate}{3}
\setlist[renumerate]{label=(\roman*),before=\raggedright, itemsep=0mm}
\newlist{arenumerate}{enumerate}{3}
\setlist[arenumerate]{label=(\arabic*),before=\raggedright, itemsep=0mm}
\newlist{aenumerate}{enumerate}{3}
\setlist[aenumerate]{label=(\alph*),before=\raggedright, itemsep=0mm}
\setlist[enumerate]{itemsep=0mm}

\DeclareMathOperator{\GL}{GL}
\DeclareMathOperator{\SL}{SL}

\DeclareMathOperator{\PGL}{PGL}
\DeclareMathOperator{\Sp}{Sp}

\DeclareMathOperator{\Hom}{Hom}

\DeclareMathOperator{\Ind}{Ind}

\DeclareMathOperator{\Aut}{Aut}
\DeclareMathOperator{\Rep}{Rep}

\DeclareMathOperator{\Spec}{Spec\,}

\DeclareMathOperator{\Lie}{Lie}
\DeclareMathOperator{\LT}{LT}

\DeclareMathOperator{\Res}{Res}
\DeclareMathOperator{\res}{res}

\DeclareMathOperator{\Ext}{Ext}

\DeclareMathOperator{\id}{id}

\DeclareMathOperator{\Ad}{Ad}

\DeclareMathOperator{\dR}{dR}

\DeclareMathOperator{\cores}{cor}

\DeclareMathOperator{\Spf}{Spf}

\DeclareMathOperator{\cts}{cts}

\DeclareMathOperator{\Spa}{Spa}



\DeclareMathOperator{\SU}{SU}

\DeclareMathOperator{\cInd}{c-Ind}

\DeclareMathOperator{\Ab}{\mathbf{Ab}}

\DeclareMathOperator{\Set}{\mathbf{Set}}
\DeclareMathOperator{\Grp}{\mathbf{Grp}}
\DeclareMathOperator{\Grpd}{\mathbf{Grpd}}
\DeclareMathOperator{\Ring}{\mathbf{Ring}}

\DeclareMathOperator{\Top}{\mathbf{Top}}
\DeclareMathOperator{\Mod}{Mod}

\DeclareMathOperator{\Ch}{\mathbf{Ch}}

\DeclareMathOperator{\Cond}{\mathbf{Cond}}
\DeclareMathOperator{\cond}{cond}

\DeclareMathOperator{\Solid}{\mathbf{Solid}}
\DeclareMathOperator{\solid}{solid}




\usetikzlibrary{calc,decorations.pathmorphing,shapes}
\newcounter{sarrow}

\tikzcdset{scale cd/.style={every label/.append style={scale=#1},
    cells={nodes={scale=#1}}}}

\newtheorem{thm}{Theorem}[section]
\newtheorem{cor}[thm]{Corollary}
\newtheorem{prop}[thm]{Proposition}
\newtheorem{lem}[thm]{Lemma}

\theoremstyle{definition}
\newtheorem{defn}[thm]{Definition}
\newtheorem{defns}[thm]{Definitions}

\newtheorem{exmp}[thm]{Example}

\theoremstyle{remark}
\newtheorem{rem}[thm]{Remark}

\usepackage[letterpaper,left=1in,right=1in,top=1.2in,bottom=1.2in]{geometry}

\usepackage{palatino}

\DeclareFontFamily{OT1}{rsfs}{}
\DeclareFontShape{OT1}{rsfs}{n}{it}{<-> rsfs10}{}
\DeclareMathAlphabet{\mathscr}{OT1}{rsfs}{n}{it}

\numberwithin{equation}{section}
\usepackage[cal=euler]{mathalfa}


\hypersetup{
  colorlinks   = true, 
  urlcolor     = DresdenDarkGreen, 
  linkcolor    = Azurite, 
  citecolor    = RedOcher 
}

\theoremstyle{definition}

\usepackage[
backend=biber,
sorting=nyvt,
style=alphabetic,
maxbibnames=5,
backref=true
]{biblatex}
\setlength{\bibitemsep}{0pt}       
\setlength{\bibhang}{1.5em}        

\renewbibmacro{in:}{}
\DeclareFieldFormat[article]{volume}{\textbf{#1}}
\DeclareFieldFormat[article]{number}{no.~#1}
\DeclareFieldFormat[article]{year}{\mkbibparens{#1}}
\renewbibmacro*{volume+number+eid}{%
  \printfield{volume}%
  \setunit*{\addspace}
  \printfield{number}%
  \setunit{\addcomma\space}
  \printfield{eid}%
}

\addbibresource{Op_1.bib}

\DeclareMathOperator{\Dr}{Dr}

\DeclareMathOperator{\Adic}{Adic}
\DeclareMathOperator{\Cont}{Cont}
\DeclareMathOperator{\EDis}{EDis}
\newcommand{\Ksolid}{K_{\square}}
\theoremstyle{theorem}
\newtheorem{Th}{Theorem}

\setcounter{tocdepth}{1}

\tikzcdset{
  cells={font=\everymath\expandafter{\the\everymath\displaystyle}},
}


\title{On the pro-\'etale cohomology of quotient stacks of Drinfeld spaces}
\author{Zecheng Yi}
\address{Department of Mathematics and Statistics\\Boston University\\Boston, MA 02215, USA}
\email{\url{zechengyi0822@gmail.com}}

\begin{document}
\maketitle
\begin{abstract}
	Let  $\cH^{n-1}_{K}$ denote the $(n-1)$-dimensional Drinfeld space over a $p$-adic field $K$.  We give an explicit description of the $\ell$-adic and $p$-adic pro-\'etale cohomology of quotient stacks $[\cH^{n-1}_{K}/\GL_n(\OO_K)]$ and $[\cH^{n-1}_{K}/\GL_n(K)]$, which are moduli stacks of special formal $\OO_D$-modules. The computation makes use of  the isomorphism between the Lubin-Tate tower and the Drinfeld tower due to Faltings and Scholze--Weinstein, as well as  the $p$-adic pro-\'etale cohomology of the Drinfeld spaces computed by Colmez--Dospinescu--Nizio{\l}.

	As an application, we also compute the continuous group cohomology of $\GL_n(\Qp)$ over duals of generalized  Steinberg representations over $\Qp$. 
\end{abstract}
\tableofcontents

\section{Introduction}
Let $K$ be a finite extension of $\Qp$ and let $C$ be the completion of an algebraic closure of $K$. The cohomology of Drinfeld spaces  was first studied by Schneider--Stuhler \cite{SS1991-1} for any cohomology theory satisfying certain axioms. Examples of such cohomology theories include de Rham cohomology and $\ell$-adic ($\ell\neq p$) \'etale cohomology. 

The $(n-1)$-dimensional  Drinfeld space $\cH^{n-1}_{K}$ admits an action of $\GL_n(K)$ and carries an interesting moduli interpretation. Specifically, if we consider the quotient stack $[\cH^{n-1}_{K}/\GL_n(\OO_K)]$ associated to $\cH^{n-1}_{K}$, we get the moduli stack of isomorphism classes of special formal $\OO_D$-modules (as defined in \cite{Drinfeld1976-1}) of height $n^2$; similarly, the quotient stack $[\cH^{n-1}_{K}/\GL_n(K)]$ gives rise to the moduli stack of height $n^2$ special formal $\OO_D$-modules up to quasi-isogeny. 

In this article, we compute the $\ell$-adic and $p$-adic pro-\'etale cohomology of these two moduli stacks. For the $\ell$-adic case, since the $\ell$-adic pro-\'etale cohomology satisfies the axioms listed in \cite{SS1991-1}, we have
\begin{equation}\label{1-l}
	H^{r}_{\proet}(\cH^{n-1}_{C},\Ql(r))\cong \Sp_r(\Ql)^{\ast}
\end{equation}
for $0\leq r\leq n-1$ where $\Sp_r(\Ql)^{\ast}$ is the $\Ql$-dual of the generalized Steinberg representation $\Sp_r(\Ql)$ of $\GL_n(K)$. By computing continuous group cohomology of $\GL_n(\OO_K)$ and $\GL_n(K)$ over $\Sp_r(\Ql)^{\ast}$, we get: 
\begin{Th}[Theorem \ref{thm-l-iso} \& Theorem \ref{thm-l-qiso}]\label{thm-0}~{}
\begin{arenumerate}
	\item The cohomology group $H^{r}_{\proet}([\cH^{n-1}_{K}/\GL_n(\OO_K)], \Ql)$ is isomorphic to $\Ql$ when $r=0$ or 1, and vanishes in all other degrees. 
	\item The cohomology group $H^{r}_{\proet}([\cH^{n-1}_{K}/\GL_n(K)],\Ql)$ is isomorphic to $\Ql$ when $r=0$ or 2, isomorphic to $\Ql^2$ when $r=1$, and vanishes in all other degrees.  
\end{arenumerate}
\end{Th}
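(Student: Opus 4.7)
My plan is to combine a Hochschild--Serre spectral sequence for the group quotient with Galois descent from $C$ to $K$, using the Schneider--Stuhler formula \eqref{1-l} as the geometric input. For any closed subgroup $G \subseteq \GL_n(K)$ acting on $\cH^{n-1}_K$, I would set up the trigraded spectral sequence
\[
E_2^{a,b,c} = H^a_{\cont}\bigl(G_K,\, H^b_{\cont}(G,\, H^c_{\proet}(\cH^{n-1}_C, \Ql))\bigr) \Longrightarrow H^{a+b+c}_{\proet}([\cH^{n-1}_K/G], \Ql),
\]
where $G_K = \Gal(\overline{K}/K)$. By \eqref{1-l}, the innermost term equals $\Sp_c(\Ql)^*(-c)$ for $0 \leq c \leq n-1$ (and vanishes otherwise), on which $G_K$ acts only through the Tate twist $\Ql(-c)$ while $G$ acts only on $\Sp_c(\Ql)^*$.

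The crucial observation is that since $\ell \neq p$, local Tate duality and the local Euler--Poincar\'e formula force $H^*_{\cont}(G_K, \Ql(-c)) = 0$ in every degree whenever $c \geq 1$. Hence only $c = 0$ survives, and since $\Sp_0(\Ql)^* = \Ql$ carries trivial actions of both $G$ and $G_K$, the spectral sequence collapses to a K\"unneth-type identity
\[
H^r_{\proet}([\cH^{n-1}_K/G], \Ql) \;=\; \bigoplus_{a+b=r} H^a_{\cont}(G_K, \Ql) \otimes_{\Ql} H^b_{\cont}(G, \Ql),
\]
with degeneracy automatic once the factors are known, since all potentially nonzero differentials land in vanishing groups. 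The Galois factor is classical: $H^*_{\cont}(G_K, \Ql) = \Ql$ in degrees $0$ and $1$ and vanishes higher, with the degree-$1$ class coming from the unramified quotient $G_K \twoheadrightarrow \widehat{\Z}$.

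For $G = \GL_n(\OO_K)$, I would exploit the exact sequence $1 \to K_1 \to \GL_n(\OO_K) \to \GL_n(k) \to 1$: the first congruence subgroup $K_1$ is pro-$p$ (so has no positive-degree continuous $\Ql$-cohomology, since $\ell \neq p$) and $\Ql[\GL_n(k)]$ is semisimple; hence $H^*_{\cont}(\GL_n(\OO_K), \Ql) = \Ql$ in degree $0$ only. For $G = \GL_n(K)$, I would first establish $H^*_{\cont}(\SL_n(K), \Ql) = \Ql$ in degree $0$ via the action of $\SL_n(K)$ on its contractible Bruhat--Tits building with compact open stabilizers (whose cohomology is handled by the preceding case), then apply $1 \to \SL_n(K) \to \GL_n(K) \xrightarrow{\det} K^* \to 1$ together with $K^* \cong \Z \times \OO_K^*$ to reduce to $H^*_{\cont}(\Z, \Ql) = \Ql$ in degrees $0$ and $1$, yielding $H^*_{\cont}(\GL_n(K), \Ql) = \Ql$ in degrees $0$ and $1$ (the degree-$1$ class being $\val_K \circ \det$).

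Plugging into the K\"unneth formula gives the theorem: for $G = \GL_n(\OO_K)$ only the $b = 0$ column contributes, so $H^r = H^r(G_K, \Ql)$ is $\Ql$ in degrees $0$ and $1$; for $G = \GL_n(K)$ we obtain $\Ql$ in degree $0$, $\Ql^2$ in degree $1$ (from $H^1(G_K) \oplus H^1(G)$), and $\Ql$ in degree $2$ (from $H^1(G_K) \otimes H^1(G)$). The main obstacle will be constructing the pro-\'etale Hochschild--Serre spectral sequence above for a quotient stack by the locally profinite group $\GL_n(K)$ and verifying that its $E_2$-page really computes \emph{continuous} group cohomology in both the $G$- and the $G_K$-variable; once this framework is in place, every downstream ingredient is classical.
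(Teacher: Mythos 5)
Your proposal is correct and follows essentially the same route as the paper: invoke Schneider--Stuhler for the geometric cohomology, observe that for $\ell\neq p$ all Tate twists $\Ql(-c)$ with $c\geq 1$ have vanishing Galois cohomology (by duality plus the Euler--Poincar\'e formula), so only $\Sp_0(\Ql)^*=\Ql$ survives descent, then feed the continuous group cohomology of $\GL_n(\OO_K)$ (resp. $\GL_n(K)$) over $\Ql$ into a Hochschild--Serre spectral sequence for the quotient stack. The paper computes $H^*_{\cts}(\SL_n(K),\Ql)$ via exactly the Bruhat--Tits resolution you describe, and then the $\det$ sequence, as in your sketch.

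Two small presentational differences are worth flagging. First, the paper does not use a single ``trigraded'' spectral sequence but rather composes two ordinary Hochschild--Serre spectral sequences in succession (Galois descent $C\to K$, then the $G$-quotient); the single-page picture you write down is not a standard object, and while your degeneracy argument is sound because every potential target is literally zero, you would need to unwind it into the two-step composition when making the argument rigorous. (You already flag this framework as the main obstacle; the paper handles it through the condensed-mathematics formalism and \cite[Lemma~4.4]{CGN}.) Second, for $H^*_{\cts}(\GL_n(\OO_K),\Ql)$ the paper argues via a pro-$p$ subgroup of finite index $m$ and the vanishing of $[m]=\cores\circ\res$ on positive-degree cohomology, whereas your argument through $1\to K_1\to\GL_n(\OO_K)\to\GL_n(k)\to 1$ together with semisimplicity of $\Ql[\GL_n(k)]$ is equally valid and arguably a bit cleaner. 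The identification of the degree-$1$ class with $\val_K\circ\det$ is a nice explicit touch that the paper does not spell out.
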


Our strategy to show Theorem \ref{thm-0} is fairly straightforward. First we descend (\ref{1-l}) down to the field $K$. The proof then rests on the aforementioned results of group cohomology together with the use of the Hochschild-Serre spectral sequence for pro-\'etale cohomology.

As for the $p$-adic case, the computation is  more involved. In \cite{CDN2020-1}, Colmez--Dospinescu--Nizio{\l} described the $p$-adic pro-\'etale cohomology of $\cH^{n-1}_{C}$ using a short exact sequence
\[
0\rightarrow \Omega ^{r-1}(\cH^{n-1}_{C})/\ker d \rightarrow H^{r}_{\proet}(\cH^{n-1}_{C}, \Qp(r))\rightarrow \Sp_r(\Qp)^{\ast}\rightarrow 0.
\] 
It is then very tempting to mimic what we did in the $\ell$-adic case. However, although the differential part $\Omega ^{r-1}(\cH^{n-1}_{C})/\ker d$ wouldn't survive  Galois descent, it is still  difficult to compute the group cohomology $H^{\ast}_{\cts}(\GL_n(K), \Sp_r(\Qp)^{\ast})$ and to control the spectral sequence converging to $H^{\ast}_{\proet}([\cH^{n-1}_{K}/\GL_n(K)], \Qp)$.  

To salvage this, we use the isomorphism between the Drinfeld tower and the Lubin-Tate tower from \cite{SW2013-1}. Specifically, let $\breve{K}$ denote the completion of the maximal unramified extension of $K$, \cite[Theorem~E]{SW2013-1} allows us to trade the action of $\GL_n(\OO_K)$ on $\cH^{n-1}_{K}$ (resp. $\GL_n(K)$ on $\cH^{n-1}_{K}$) for the action of the Morava stabilizer group $\G_n$ on the Lubin-Tate space  $\LT_{n,\breve{K}}$ (resp. $\G^{0}_{n}:=\G_n\times \Z$ on $\P^{n-1}_{\breve{K}}$), and gives us two isomorphisms of stacks:
\[
	[\cH^{n-1}_{K}/\GL_n(\OO_K)]\cong [\LT_{n,\breve{K}}/\G_n]\quad\text{and}\quad	[\cH^{n-1}_{K}/\GL_n(K)]\cong [\P^{n-1}_{\breve{K}}/\G_n^{0}].
\]
By transferring to the Lubin-Tate side, our problem is reduced to computing the continuous group cohomology of $\G_n$ over the  pro-\'etale cohomology of $\LT_{n,\breve{K}}$. Let $\Lambda _{\Qp}(x_1,x_3,\cdots,x_{2n-1})$ denote the exterior algebra over $\Qp$ generated by $x_i$ in degree $i$ and let   $\Lambda _{\Qp}(y)$ denote the exterior algebra generated by a single element $y$ in degree 1. Our main result in the $p$-adic case is the following:

\begin{Th}[Theorem \ref{thm-p-iso} \& Theorem \ref{thm-p-qiso}]\label{thm-2}
	Suppose $K/\Qp$ is a finite extension of degree $d$. 
	\begin{arenumerate}
	\item There is an isomorphism of graded $\Qp$-vector spaces
		\[
			H^{\ast}_{\proet}([\cH^{n-1}_{K}/\GL_n(\OO_K)],\Qp)\cong \Lambda _{\Qp}(x_1,x_3,\cdots,x_{2n-1})^{\otimes d}\otimes H^{\ast}_{\cts}(\Gal(\ol{K}/K), \Qp).
		\] 
	\item There is an isomorphism of graded $\Qp$-vector spaces
		\[
			H^{\ast}_{\proet}([\cH^{n-1}_{K}/\GL_n(K)],\Qp)\cong H^{\ast}_{\proet}(\P^{n-1}_{K},\Qp)\otimes \Lambda _{\Qp}(x_1,x_3,\cdots,x_{2n-1})^{\otimes d}\otimes \Lambda _{\Qp}(y),
		\] 
			where 
		\[
			H^{\ast}_{\proet}(\P^{n-1}_{K},\Qp)\cong \begin{cases}
			\Qp  &  \text{if $\ast=0$}\\
			\Qp^{d+1}  &  \text{if $\ast=1$}\\
			\Qp^{d}  &  \text{if $3\leq \ast\leq 2n-1$ and $\ast$ is odd}\\
				0 & \text{otherwise.}
			\end{cases}
		\] 
\end{arenumerate}
\end{Th}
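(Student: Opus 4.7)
The plan is to translate both computations to the Lubin--Tate side via the Scholze--Weinstein tower isomorphism \cite{SW2013-1}, which after base change to $\breve K$ yields the stack-level identifications
\[
[\cH^{n-1}_{K}/\GL_n(\OO_K)]_{\breve K}\;\cong\;[\LT_{n,\breve K}/\G_n],\qquad [\cH^{n-1}_{K}/\GL_n(K)]_{\breve K}\;\cong\;[\P^{n-1}_{\breve K}/\G_n^{0}]
\]
together with a Weil descent datum encoding $\Gal(\breve K/K)\cong \wh\Z$. One then computes the $\breve K$-cohomology by a Hochschild--Serre spectral sequence for the Morava stabilizer action, and finally descends the answer back down to $K$.

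For part (2), the base space $\P^{n-1}_{\breve K}$ has its classical pro-\'etale cohomology, concentrated in even degrees with the standard Tate twists, and the $\G_n$-action on these classes is trivial (the hyperplane class is invariant under the embedding $\G_n\hookrightarrow \GL_n(\breve K)$ induced by the action on $D\otimes_K\breve K\cong M_n(\breve K)$). The Hochschild--Serre $E_2$-page therefore splits as a tensor product, and by Lazard's theorem together with $\G_n^{0}=\G_n\times\Z$ one obtains
\[
H^{\ast}_{\cts}(\G_n^{0},\Qp)\;\cong\;\Lambda_{\Qp}(x_1,x_3,\ldots,x_{2n-1})^{\otimes d}\otimes \Lambda_{\Qp}(y).
\]
The sequence degenerates at $E_2$ for Galois-weight reasons, and descent from $\breve K$ to $K$ combines with the residual inertia action on $\Qp(-j)$: a Leray-type analysis converts the $\P^{n-1}_{\breve K}$-factor together with its $\wh\Z$-descent into $H^{\ast}_{\proet}(\P^{n-1}_K,\Qp)$ with the explicit graded dimensions recorded in the theorem.

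For part (1), the crucial step is to compute $H^{\ast}_{\cts}(\G_n,H^{\ast}_{\proet}(\LT_{n,\breve K},\Qp))$. Since the Lubin--Tate space is geometrically an open $(n-1)$-dimensional polydisk, its \'etale cohomology vanishes in positive degrees, and the Colmez--Dospinescu--Nizio{\l} short exact sequence \cite{CDN2020-1} collapses to isomorphisms $H^{r}_{\proet}(\LT_{n,\breve K,C},\Qp(r))\cong \Omega^{r-1}(\LT_{n,\breve K,C})/\ker d$ for $r\ge 1$. The principal obstacle is to show that the $\G_n$-cohomology of this infinite-dimensional differential part vanishes, leaving only the trivial-coefficient contribution $\Qp$ in degree $0$; I expect this to follow from an explicit $\G_n$-equivariant description of $\Omega^{\ast}$ on the Lubin--Tate polydisk together with a compact-averaging argument for the $p$-adic Lie group $\G_n$, possibly pulled back from the known vanishing statements on the Drinfeld side via the tower isomorphism. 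Once this vanishing is in place, Lazard's theorem identifies $H^{\ast}_{\cts}(\G_n,\Qp)\cong \Lambda_{\Qp}(x_1,x_3,\ldots,x_{2n-1})^{\otimes d}$, and descent along $\wh\Z$ combined with the inherent $I_K$-action on $H^{\ast}_{\proet}(\LT_{n,\breve K},\Qp)$ produces the Galois factor $H^{\ast}_{\cts}(\Gal(\ol K/K),\Qp)$ via an inflation--restriction argument, yielding the stated K\"unneth-type decomposition.
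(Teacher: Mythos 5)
The high-level plan—pass to the Lubin--Tate side via Scholze--Weinstein—matches the paper, but the key technical step in part (1) is wrong, and the organization of the descent differs in a way that matters.

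For part (1), the paper never shows, and does not need, that the $\G_n$-cohomology of the differential part $\Omega^{j-1}(\ocirc{\B}^{n-1})/\ker d$ vanishes. The actual mechanism is different: the paper builds the group $H$ as the fiber product of $\cG_K$ and $\G_n$ over $\Gal(\breve K/K)$, so that $[\LT_{n,\breve K}/\G_n]\cong[\ocirc{\B}^{n-1}_C/H]$ and $H$ sits in $0\to\OO_D^\times\to H\to\cG_K\to 0$. Running Hochschild--Serre for this extension and applying the projection formula (Lemma \ref{lem-proj}) isolates the factor $H^p_{\cts}(\cG_K, C(-j))$, which vanishes for $j\neq 0$ by Tate's theorem. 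So the differential contribution dies for reasons of Galois weight, not because of any vanishing of $\OO_D^\times$-cohomology. Your suggested route—a ``compact-averaging argument for the $p$-adic Lie group $\G_n$''—cannot work: for a compact $p$-adic group with $\Qp$-coefficients there is no averaging operator, and indeed Lazard's theorem (Lemma \ref{lem-Lazard}, Proposition \ref{prop-generalization}) shows $H^\ast_{\cts}(\OO_D^\times,\Qp)$ is a large exterior algebra, not concentrated in degree zero. Without the group $H$ and the projection-formula/Tate argument, the computation of the inner spectral sequence terms for $j\geq 1$ is not accessible; you flag this as an obstacle you ``expect'' to overcome, but there is no argument, and the route you sketch does not lead anywhere.

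Your organization of the descent is also subtly off, especially in part (2). You treat $\breve K$-cohomology first and then descend, and you write $H^\ast_{\cts}(\G_n^0,\Qp)\cong\Lambda_{\Qp}(x_1,\dots,x_{2n-1})^{\otimes d}\otimes\Lambda_{\Qp}(y)$. But $\G_n^0=D^\times\rtimes\wh\Z$, and $D^\times\cong\OO_D^\times\times\varpi^{\Z}$, so passing from $\OO_D^\times$ to $\G_n^0$ adds \emph{two} degree-one contributions (one from $\varpi^{\Z}$ and one from $\wh\Z$), not one. In the paper this is resolved correctly because the $\wh\Z$-part of $\G_n$ is absorbed into the Galois descent: again via the group $H$, the $\wh\Z$-part and the Galois inertia are combined into $\cG_K$, so the Galois factor appears as $H^\ast_{\proet}(\P^{n-1}_K,\Qp)$ (see Lemma \ref{lem-fib1}) and only a single $\Lambda(y)$ for the $\Z\subseteq\G_n^0$ survives. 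Finally, to get the tensor-product decomposition in Lemma \ref{lem-fib1} the paper also needs that the $\cG_K$-conjugation action on $H^\ast_{\cts}(\OO_D^\times,\Qp)$ is trivial (citing \cite[Proposition~3.8.1]{BSSW-1}); your proposal does not address this step at all. You should replace the ``breve-$K$ then descend'' picture by the fiber-product group $H$ and supply the Tate-vanishing argument and the triviality of the conjugation action.
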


Our computations conducted on the Lubin-Tate side  also have some applications back to the Drinfeld side. Using Theorem \ref{thm-2}, we show (Theorem \ref{thm-Steinberg-coh}) that there is an isomorphism $H^{i}_{\cts}(\GL_2(\Qp), \Sp_1(\Qp)^{\ast})\cong H^{i-1}_{\cts}(\GL_2(\Qp), \Qp)$ for every $i\geq 0$. Furthermore, Theorem \ref{thm-2} suggests that the cohomology groups of  $\GL_n(\Qp)$ over the dual Steinberg representation $\Sp_r(\Qp)^{\ast}$ is a degree-$r$ shift of that over $\Qp$. Consider the exterior algebra  $\Lambda_{\Qp}(x,y,x_3,\cdots,x_{2n-1})$  over $\Qp$ with degrees of generators given by $\left|x\right|=\left|y\right|=1$ and $\left|x_i\right|=i$. We can then identify  $H^{\ast}_{\cts}(\GL_n(\Qp), \Qp)$ with this exterior algebra (cf. Proposition \ref{prop-GLQp}). Using techniques from \cite{Orlik2005-1} for computing $\Ext$-groups between Steinberg representations and also the machinery of solid representations from \cite{RJRC2022-1} and \cite{RJRC2025-1}, we show:

\begin{Th}[Theorem \ref{thm-shift}]
There is an isomorphism of graded $\Qp$-vector spaces 
\[
	H^{\ast}_{\cts}(\GL_n(\Qp), \Sp_r(\Qp)^{\ast})\cong \Lambda _{\Qp}(x,y,x_3,\cdots,x_{2n-1})[-r]
\] 
for  every  $0\leq r\leq n-1$. 
\end{Th}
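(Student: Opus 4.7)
The plan is to adapt Orlik's technique from \cite{Orlik2005-1} to the setting of solid representations of \cite{RJRC2022-1} and \cite{RJRC2025-1}. First, I would write down a resolution of the generalized Steinberg representation $\Sp_r(\Qp)$ by smooth parabolic inductions $i_{P_I}^G := \Ind_{P_I}^G \mathbf{1}$, where $G = \GL_n(\Qp)$ and $I$ ranges over subsets of the set $\Delta$ of simple roots containing a fixed subset of size $n-1-r$ (the one corresponding to the parabolic defining $\Sp_r$). Dualizing in the solid sense then yields a resolution of $\Sp_r(\Qp)^\ast$ whose terms are the continuous duals $(i_{P_I}^G)^\ast$.

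Next, applying $H^\ast_{\cts}(G, -)$ term-by-term produces a spectral sequence whose entries, by Shapiro's lemma in the solid framework, take the form $H^\ast_{\cts}(G, (i_{P_I}^G)^\ast) \cong H^\ast_{\cts}(P_I, \Qp)$. I would compute the right-hand side for each parabolic $P_I = L_I \ltimes U_I$ by an iterated Hochschild--Serre argument along the lower central filtration of $U_I$, using that the continuous cohomology of the additive $p$-adic Lie group $\G_a(\Qp)$ is an exterior algebra on one generator in degree $1$. Combined with the K\"unneth formula for the product Levi $L_I \cong \prod_j \GL_{n_j}(\Qp)$ and the formula $H^\ast_{\cts}(\GL_m(\Qp), \Qp) \cong \Lambda_{\Qp}(x, y, x_3, \ldots, x_{2m-1})$ recorded in Proposition \ref{prop-GLQp}, this renders the $E_1$-page explicit as a combination of exterior algebras indexed by standard parabolics of $\GL_n$.

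The main obstacle is the analysis of the differentials. The naive $E_1$-page is much larger than the claimed answer, and the differentials are induced by restriction maps among parabolics with compatible Levi factors. Following Orlik, these maps are identified with Weyl-group combinatorial data, and a Solomon--Tits type cancellation leaves behind exactly the exterior algebra $\Lambda_{\Qp}(x, y, x_3, \ldots, x_{2n-1})$ sitting in a degree-$r$ shift. As a consistency check, Theorem \ref{thm-p-qiso} applied to $K = \Qp$ determines $H^\ast_{\proet}([\cH^{n-1}_{\Qp}/\GL_n(\Qp)], \Qp)$ explicitly; running the Hochschild--Serre spectral sequence for $\GL_n(\Qp)$ against the Colmez--Dospinescu--Nizio{\l} description of $H^\ast_{\proet}(\cH^{n-1}, \Qp)$ provides an independent verification of the degree-$r$ shift in appropriate ranges, and in particular recovers the $n=2$, $r=1$ case pinned down in Theorem \ref{thm-Steinberg-coh}.
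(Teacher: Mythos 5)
Your outline correctly identifies the resolution of $\Sp_r(\Qp)$ by parabolically induced representations and the solid/condensed framework needed to make sense of $\Ext$-groups, but it contains a substantive error at the key step that makes the argument much harder than it needs to be. You claim that Shapiro's lemma yields
\[
H^{\ast}_{\cts}(G, (i_{P_I}^G)^{\ast}) \cong H^{\ast}_{\cts}(P_I, \Qp),
\]
but this drops the modulus character. After passing to smooth vectors (as in Proposition \ref{prop-to_smooth}), the smooth dual of $\Ind_{P_I}^{G}\mathbbm{1}$ is $\Ind_{P_I}^{G}\delta_{P_I}$, and Frobenius reciprocity then gives
\[
\Ext^{\ast}_{\Ksolid[G]}(\underline{\mathbbm{1}}, \underline{\Ind_{P_I}^{G}\delta_{P_I}}) \cong \Ext^{\ast}_{\Ksolid[P_I]}(\underline{\mathbbm{1}}, \underline{\delta_{P_I}}) \cong H^{\ast}_{\cts}(P_I, \delta_{P_I}),
\]
which is \emph{not} $H^{\ast}_{\cts}(P_I, \Qp)$. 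This is not a minor normalization issue: for any proper parabolic $P_I \subsetneq G$ (equivalently $I \subsetneq \Delta$), the modulus character $\delta_{P_I}$ is a nontrivial character, and by \cite[Proposition~XI.1.9]{BW} the groups $H^{\ast}_{\cts}(P_I, \delta_{P_I})$ vanish in \emph{all} degrees. This is precisely Lemma \ref{lem-ind_vanishing} in the paper.

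As a result, the $E_1$-page of the spectral sequence is far simpler than you anticipate: every column except the one corresponding to $I = \Delta$ (the trivial representation, contributing $\Ext^{\ast}_{\Ksolid[G]}(\underline{\mathbbm{1}}, \underline{\mathbbm{1}}) = H^{\ast}_{\cts}(G,\Qp)$) is identically zero. There is no need for an explicit computation of $H^{\ast}_{\cts}(P_I, \Qp)$ via iterated Hochschild--Serre on the unipotent radical and K\"unneth on the Levi, and no Solomon--Tits-type combinatorial cancellation to organize. The spectral sequence collapses immediately, and the surviving column sits in horizontal position $r$, giving the shift. Your planned combinatorial analysis of the differentials is where the argument would actually break down (or at least become very delicate), because the $E_1$-page you wrote down is the wrong one; the remedy is to notice that it is mostly zero from the start. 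The consistency check against Theorem \ref{thm-p-qiso} is a sensible sanity check but of course does not substitute for this vanishing.
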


\subsection*{Acknowledgements} 
I am deeply grateful to my advisor, Jared Weinstein, for suggesting this problem and for his constant encouragement and many helpful pieces of advice. I  thank  Jiawei An, Gabriel Dospinescu, Alexander Petrov, and Juan Esteban Rodr\'iguez Camargo for several helpful and stimulating discussions on topics related to this project. I also thank Jared Weinstein and H\r{a}vard Damm-Johnsen for many helpful comments and corrections on earlier drafts of this text. 

The author was partially supported by NSF grant DMS-2401472. 

\subsection*{Notations}
We use the following notations throughout the paper, unless otherwise stated. 

We denote  by $K$  a  finite extension of $\Qp$, with ring of integers $\OO_K$, uniformizer $\pi$, and residue field $k$. We let $W=W(\ol{k})$ be the ring of Witt vectors of $\ol{k}$, and let $\breve{K}=W[\frac{1}{p}]$ be the completion of the maximal unramified extension of $K$. We let $C$ be the completion of an algebraic closure $\ol{K}$ of $K$, and the  Galois group $\Gal(\ol{K}/K)$ will be denoted by $\cG_K$. 
We use $\ell$ to denote  a prime different from $p$.

\section{Two pairs of moduli stacks}\label{section-2}
Following \cite[Chapter~3]{RZ} and \cite[Chapitre~II]{Fargues2008-1}, we recall the definitions of Drinfeld tower and Lubin-Tate tower at infinite level, their  related  group actions, and also the moduli interpretation of associated quotient stacks. We will also state the isomorphism due to Scholze--Weinstein between the two towers and derive two pairs of isomorphic moduli stacks.   
\subsection{Moduli stacks arising from the Drinfeld tower}\label{section-2.1}

For $n\geq 2$, the $(n-1)$-dimensional Drinfeld symmetric space over $K$ is defined as 
\[
\cH^{n-1}_{K}:=\P ^{n-1}_{K}\bs \bigcup^{}_{H\in \fH}H
\] 
where $\fH$ is the set of $K$-rational hyperplanes inside $\P^{n-1}_{K}$. The space $\cH^{n-1}_{K}$ is a rigid analytic Stein space and has a natural action of  $\GL_n(K)$ on it. Let $D$ be the unique (up to isomorphism) division algebra over $K$ of invariant $1/n$, let $\OO_D$ be its ring of integers, and denote by $\varpi$ the uniformizer of $D$. As defined in \cite{Drinfeld1976-1}, the level-0 Drinfeld moduli space $\sM^{\Dr}_{0}$ arises as the rigid  fiber of certain deformation space of special formal $\OO_D$-modules of dimension $n$ and height $n^2$. Moreover, Drinfeld also proved in \cite{Drinfeld1976-1} that there is a $\GL_n(K)$-equivariant isomorphism  $\sM^{\Dr}_{0}\cong \coprod_{\Z}\cH^{n-1}_{\breve{K}}$. The space $\sM^{\Dr}_{0}$ admits a tower of finite \'etale $\GL_n(K)$-coverings
\[
	\cdots \rightarrow \sM^{\Dr}_{2}\rightarrow \sM^{\Dr}_{1}\rightarrow \sM^{\Dr}_{0}\cong \coprod_{\Z}\cH^{n-1}_{\breve{K}},
\] 
which is called  the \textit{Drinfeld tower}. 

Fix  $G_0$ to be a special formal $\OO_D$-module over $k$ of dimension  $n$ and height $n^2$. 

\begin{defn}\label{defn-DR}
	Let $(R,R^{+})$ be a complete affinoid $(K, \OO_K)$-algebra. The \textit{Drinfeld tower at infinite level} is a functor  $\sM^{\Dr}_{\infty}$ on complete affinoid $(K,\OO_K)$-algebras  whose set of $(R,R^{+})$-points is the set of quadruples  $(G,\iota, \rho,\eta)$ up to isomorphism, where 
	\begin{itemize}
		\item $G$ is a special formal $\OO_{D}$-module over $R^{+}$,
		\item $\iota: W \rightarrow R^{+}$ is a ring homomorphism,
		\item $\rho: G_0\otimes _{W, \iota} R^{+}/pR^{+} \rightarrow G\otimes _{R^{+}} R^{+}/pR^{+}$ is an $\OO_D$-equivariant quasi-isogeny,
		\item $\eta: \OO_D\xrightarrow{\sim} T_p(G)=\invlim G[p^{n}]$ is an isomorphism of $\OO_D$-modules. 
	\end{itemize}
	Two quadruples  $(G,\iota, \rho, \eta)$ and $(G', \iota', \rho', \eta')$ are  isomorphic if there is an isomorphism $f: G\xrightarrow{\sim} G'$ transferring  one set of data to the other. 
\end{defn}

\begin{rem}
Here we have a slight caveat that a complete affinoid algebra $(R,R^{+})$ might not be sheafy, in which case its adic spectrum is not an adic space in the sense of Huber. However, as discussed in  \cite[Section~2.1]{SW2013-1}, the category of complete affinoid algebras has the sheafy ones as a full subcategory, and the functor $(R,R^{+})\mapsto \Spa(R,R^{+})$ is fully faithful when restricted to this subcategory. One could also restrict to pairs $(R,R^{+})$ where $R$ is perfectoid, which guarantees the sheafiness and leads to the "diamond" version of the moduli space. See Remark \ref{rem-diamond} for more on this. 
\end{rem}

The space $\sM^{\Dr}_{\infty}$ bears an action of $\GL_n(K)\times D^{\times}$. The group $\GL_n(K)$ comes from the automorphism group $\Aut^{0}(G_0)$ of $G_0$ in the isogeny category, and $g\in \GL_n(K)$ acts on $\sM^{\Dr}_{\infty}$ by sending $\rho$ to $\rho \circ  g^{-1}$. An element $d\in \OO^{\times}_{D}$ acts on $\sM^{\Dr}_{\infty}$ by sending $\eta$ to $\eta \circ d^{-1}$. Let $\phi$ denote the quotient map $G\surj G/G[\varpi]$. The action of $\varpi\in D^{\times}$ is defined by $\varpi\cdot (G,\iota, \rho, \eta)=(G/G[\varpi], \iota, \phi\circ \rho, \eta\circ \varpi)$. From the space $\sM^{\Dr}_{\infty}$, quotienting by $\OO^{\times}_{D}$ gives $\sM^{\Dr}_{0}$ and quotienting by $D^{\times}$ places us at the level of $\cH^{n-1}_{\breve{K}}$. Let $\varphi\in \Gal(\breve{K}/K)$ be the lift of Frobenius, then $\varphi$ acts on $\sM^{\Dr}_{\infty}$ by sending $\iota$ to $\iota\circ \varphi$. In the mean time, $\varphi$ also induces an isomorphism $f_{\varphi}: G_0\otimes _{W, \iota\circ \varphi}R^{+}/pR^{+}\xrightarrow{\sim} G_0\otimes _{W,\iota}R^{+}pR^{+}$ and a quasi-isogeny $\rho_{\varphi}:=\rho\circ f_{\varphi}: G_0\otimes _{W, \iota\circ \varphi}R^{+}/pR^{+}\rightarrow H\otimes R^{+}/pR^{+}$. The map $(G, \iota\circ \varphi, \rho, \eta)\mapsto (G, \iota, \rho_{\varphi}, \eta)$ then defines an isomorphism $\varphi^{\ast}\sM^{\Dr}_{\infty}\xrightarrow{\sim} \sM^{\Dr}_{\infty}$ and gives the functor $\sM^{\Dr}_{\infty}$ a Weil descent datum. To help visualize how these spaces in the Drinfeld tower relate to one another, we have the following diagram:
\[
\begin{tikzcd}
	& \sM^{\Dr}_{\infty}\arrow[dd, "\OO_D^{\times}\rtimes \Gal(\breve{K}/K)", swap]\arrow[dr, "\OO_D^{\times}"]\arrow[dddd, bend right = 85, "D^{\times}\rtimes \Gal(\breve{K}/K)", swap, looseness = 1.2]\arrow[dddr, bend left = 63, in = 70,  looseness = 1.6, "D^{\times}"] & \\
	& &\coprod_{\Z}\cH^{n-1}_{\breve{K}}\arrow[dl, "\Gal(\breve{K}/K)"]\arrow[dd, "\Z"]\\
	& \coprod_{\Z}\cH^{n-1}_{K}\arrow[dd, "\Z"]	& \\
	&       &\cH^{n-1}_{\breve{K}}\arrow[dl, "\Gal(\breve{K}/K)"]\\
	& \cH^{n-1}_{K} &
\end{tikzcd}
\]

Building on the moduli interpretation of $\sM^{\Dr}_{\infty}$ , we can similarly regard $\cH^{n-1}_{K}$ as a moduli space. Furthermore, since $\GL_n(K)$ acts on $\cH^{n-1}_{K}$, taking quotients by its subgroups naturally yields some  interesting moduli stacks. Among these, we  are particularly interested in the quotient stacks $[\cH^{n-1}_{K}/\GL_n(\OO_K)]$ and $[\cH^{n-1}_{K}/\GL_n(K)]$. We will explain their moduli interpretation in the following. 

We define a functor $\sG^{+}$ from the category of affinoid adic spaces over $\Spa(K,\OO_K)$ to the category $\Grpd$ of groupoids by assigning to each affinoid adic space $\Spa(R, R^{+})$ the groupoid $\sG^{+}(\Spa(R, R^{+}))$ where: 
\begin{itemize}
	\item Objects in $\sG^{+}(\Spa (R,R^{+}))$ are special formal $\OO_D$-modules  over $R^{+}$ that are quasi-isogenous to the fixed module $G_0$ over $R^+/pR^+$.
	\item Morphisms in $\sG(\Spa(R,R^{+}))$ are isomorphisms between such special formal $\OO_D$-modules.
\end{itemize}
Similarly, we define another  functor $\sG^{0,+}$ from the category of affinoid adic spaces over $\Spa(K,\OO_K)$ to $\Grpd$ by assigning to each affinoid adic space $\Spa (R,R^{+})$ the groupoid $\sG^{0,+}(\Spa (R,R^{+}))$ where:
\begin{itemize}
	\item Objects in $\sG^{0,+}(\Spa (R,R^{+}))$ are special formal $\OO_D$-modules over $R^{+}$ which are quasi-isogenous to $G_0$ over $R^{+}/pR^{+}$.
	\item Morphisms in $\sG^{0,+}(\Spa(R,R^{+}))$ are quasi-isogenies between such special formal $\OO_D$-modules.
\end{itemize}
We let $\sG$ be the sheafification of $\sG^{+}$ and let $\sG^{0}$ be the sheafification of $\sG^{0,+}$, both with respect to the pro-\'etale topology. So $\sG$ and $\sG^{0}$ are functors from the category $\Adic_{K}$ of adic spaces over $\Spa(K,\OO_K)$ to the category $\Grpd$ which pro-\'etale locally on $\Spa(R, R^{+})$ are given by $\sG^{+}$ and $\sG^{0,+}$ respectively.

\begin{prop}\label{prop-Dr-moduli}
	The quotient stacks $[\cH^{n-1}_{K}/\GL_n(\OO_K)]$ and $[\cH^{n-1}_{K}/\GL_n(K)]$ represent the functors $\sG$ and $\sG^{0}$ respectively. 
\end{prop}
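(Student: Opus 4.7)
My approach is to realize $\sG$ and $\sG^{0}$ as pro-\'etale quotient stacks of the Drinfeld tower $\sM^{\Dr}_\infty$ by the various groups whose actions are described in Section~\ref{section-2.1}, and then apply the quotient diagram of that section (which encodes both Drinfeld's isomorphism and the Weil descent datum) to identify them with the claimed stacks.

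I would begin by showing that, for any affinoid $(R,R^+)$ over $(K,\OO_K)$, an object $G \in \sG^+(\Spa(R,R^+))$ can be lifted pro-\'etale locally to a quadruple $(G,\iota,\rho,\eta)$ classifying a point of $\sM^{\Dr}_\infty$. Each of the three pieces of auxiliary data becomes available after a suitable pro-\'etale cover: a Witt lift $\iota: W \to R^+$ exists after trivializing the $\ol{k}$-structure on $R^+/pR^+$, and such trivializations form a torsor under $\Gal(\breve{K}/K)$; a quasi-isogeny $\rho$ exists by the defining hypothesis on $\sG^+$ after an \'etale refinement, with different choices related by $\GL_n(K) = \Aut^{0}_{\OO_D}(G_0)$; and a Drinfeld level structure $\eta: \OO_D \xrightarrow{\sim} T_p(G)$ trivializes the Tate module, which pro-\'etale locally is a free rank-one $\OO_D$-module, yielding an $\OO_D^\times$-torsor of trivializations. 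These three groups are exactly those acting on $\sM^{\Dr}_\infty$ in Section~\ref{section-2.1}, so sheafifying yields
\[
\sG \simeq \big[\sM^{\Dr}_\infty \,/\, (\OO_D^\times \times \GL_n(K) \rtimes \Gal(\breve{K}/K))\big].
\]
For $\sG^{0}$, where morphisms are quasi-isogenies, one further identifies $G$ with $G/G[\varpi^k]$ for all $k \in \Z$; combined with $\OO_D^\times$ this enlarges the first factor to $D^\times$, giving
\[
\sG^{0} \simeq \big[\sM^{\Dr}_\infty \,/\, (D^\times \times \GL_n(K) \rtimes \Gal(\breve{K}/K))\big].
\]

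Next, reading off the diagram of Section~\ref{section-2.1}, the quotient $\sM^{\Dr}_\infty /(\OO_D^\times \rtimes \Gal(\breve{K}/K))$ equals $\coprod_\Z \cH^{n-1}_K$ and $\sM^{\Dr}_\infty /(D^\times \rtimes \Gal(\breve{K}/K))$ equals $\cH^{n-1}_K$. Performing the remaining $\GL_n(K)$-quotient therefore yields
\[
\sG \simeq \big[(\coprod_\Z \cH^{n-1}_K)/\GL_n(K)\big], \qquad \sG^{0} \simeq [\cH^{n-1}_K/\GL_n(K)],
\]
and the statement for $\sG^{0}$ follows immediately. To finish the $\sG$ case, I would identify $[(\coprod_\Z \cH^{n-1}_K)/\GL_n(K)]$ with $[\cH^{n-1}_K/\GL_n(\OO_K)]$ by verifying that the $\GL_n(K)$-action on $\pi_0(\coprod_\Z \cH^{n-1}_K) = \Z$ is transitive with stabilizer exactly $\GL_n(\OO_K)$.

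I expect this last identification to be the main obstacle. The action on components is governed by how the height of $\rho$ changes under $\rho \mapsto \rho \circ g^{-1}$, and pinning down the stabilizer as $\GL_n(\OO_K)$ rather than the naive candidate $\{g \in \GL_n(K) : \val_K(\det g) = 0\}$ requires careful bookkeeping of Drinfeld's normalization in conjunction with the Weil descent datum. I would plan to appeal to the explicit descriptions of Drinfeld's isomorphism in Boutot--Carayol or Rapoport--Zink to fix conventions and complete the identification.
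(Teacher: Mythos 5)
Your steps 1--5 are a sound way to organize the argument, and the $\sG^{0}$ half goes through exactly as you describe. The obstacle you flag in step 6 is genuine, and I do not believe it can be resolved by careful bookkeeping. On the Drinfeld side the quasi-isogenies $\rho\colon G_0\to G$ for a fixed $G$ form a torsor under $\Aut^0_{\OO_D}(G_0)=\GL_n(K)$, and $g\in\GL_n(K)$ shifts the height of $\rho$ by $\height(g)$, which is a fixed multiple of $\val_K(\det g)$. The stabilizer of a component of $\coprod_{\Z}\cH^{n-1}_K$ is therefore precisely your "naive candidate" $\{g\in\GL_n(K):\val_K(\det g)=0\}$, which for $n\geq 2$ strictly contains $\GL_n(\OO_K)$, so the proposed identification $[\coprod_\Z \cH^{n-1}_K/\GL_n(K)]\cong[\cH^{n-1}_K/\GL_n(\OO_K)]$ fails.

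For comparison, the paper's own proof takes a different route: it quotients $\sM^{\Dr}_\infty$ by $D^\times\rtimes\Gal(\breve K/K)$ first, arriving at $\cH^{n-1}_K$ parametrizing pairs $(G,\rho)$, and then asserts that further quotienting by $\GL_n(\OO_K)$ forgets $\rho$. But this meets the same difficulty you encountered, since the $\rho$'s form a $\GL_n(K)$-torsor (not a $\GL_n(\OO_K)$-torsor) and the latter does not act transitively on them. The situation is cleaner on the Lubin--Tate side (Proposition~\ref{prop-LT-moduli}): there the $\rho$'s form a $D^\times$-torsor and the paper quotients by all of $D^\times$, so that argument genuinely works. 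Keeping track of which subgroup of $\GL_n(K)\times D^\times$ one quotients $\sM^{\Dr}_\infty$ by, $\sG$ corresponds to $\GL_n(K)\cdot\OO_D^\times$, whereas $[\cH^{n-1}_K/\GL_n(\OO_K)]$ --- and equivalently $[\LT_{n,\breve K}/\G_n]$ by Corollary~\ref{cor-iso} --- corresponds to $\GL_n(\OO_K)\cdot D^\times$, and these subgroups are distinct even modulo the diagonal copy of $K^\times$ acting trivially. So the discrepancy you ran into is not a normalization artifact that Boutot--Carayol or Rapoport--Zink will resolve; it reflects a genuine mismatch between $\sG$ and the stack $\sH$ of Proposition~\ref{prop-LT-moduli}, and the $\sG$ half of the statement as written appears problematic. (The $\sG^{0}$ half is fine, because $\sG^0$ and $\sH^0$ both arise from the full group $\GL_n(K)\cdot D^\times$.)
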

\begin{proof}
	From the space $\sM^{\Dr}_{\infty}$, when we quotient out the action of $D^{\times}$, the resulting space $\cH^{n-1}_{\breve{K}}$ only parametrizes triples $(G,\iota, \rho)$. Using the Weil descent datum on $\sM^{\Dr}_{\infty}$, quotienting by $\Gal(\breve{K}/K)$ further forgets the homomorphism $\iota$.  Now  two pairs $(G,\rho)$ and $(G',\rho')$ identify the same point in $\sM^{\Dr}_{\infty}$ if the quasi-isogeny 
	$\rho'\circ \rho^{-1}: G\text{ mod } p \rightarrow G'\text{ mod } p$
lifts to an isomorphism $G\rightarrow G'$ over  $R^{+}$. Thus in the stacky quotient $[\cH^{n-1}_{\breve{K}}/\GL_n(\OO_K)]$, the data of $\rho$ is forgotten and the morphisms between the objects are isomorphisms. When we instead quotient out by the full group $\GL_n(K)$, we are further allowing morphisms over $R^{+}$ which are only invertible upon inverting $p$. Thus  morphisms in $[\cH^{n-1}_{\breve{K}}/\GL_n(K)]$ are  quasi-isogenies.
\end{proof}

\subsection{Moduli stacks arising from the Lubin-Tate tower}
The Lubin-Tate tower consists of spaces parametrizing formal groups. Similar to Section \ref{section-2.1}, we will first recall the construction of these spaces and then explain the moduli interpretation of the associated moduli stacks of our interest. For more details on this, see also \cite[Section~6.4]{SW2013-1} and \cite[Section~3]{BSSW2024-2}.  

Fix $H_0$ to be a 1-dimensional formal group over $k$  of height $n$. The Lubin-Tate space arises as the deformation space of $H_0$. Over a Noetherian local ring of residue characteristic $p$, \cite[Proposition~1]{Tate1967-1} states that the functor $H\mapsto H[p^{\infty}]$ gives an equivalence between the category of $p$-divisible formal groups and the category of connected $p$-divisible groups over this ring. To simplify notation, we use the same symbol $H$ for both a deformation of $H_0$ and its associated $p$-divisible group $H[p^{\infty}]$.

\begin{defn}
	Let $(R,R^{+})$ be a complete affinoid $(K,\OO_K)$-algebra. The \textit{Lubin-Tate tower at infinite level} is a functor  $\sM^{\LT}_{\infty}$ on complete affinoid $(K,\OO_K)$-algebras  whose set of $(R,R^{+})$-points is the set of quadruples $(H,\iota,\rho,\eta)$ up to isomorphism, where 
	\begin{itemize}
		\item $H$ is a 1-dimensional $p$-divisible formal group over $R^{+}$,
		\item $\iota: W\rightarrow R^{+}$ is a ring homomorphism,
		\item $\rho: H_0\otimes _{W,\iota} R^{+}/pR^{+} \rightarrow H\otimes_{R^{+}} R^{+}/pR^{+}$ is a quasi-isogeny,
		\item $\eta: \OO_K^{n}\xrightarrow{\sim} T_p(H)=\invlim H[p^{n}]$ is an isomorphism of Galois modules.
	\end{itemize}
	Two quadruples $(H,\iota,\rho,\eta)$ and $(H,\iota', \rho', \eta')$ are isomorphic if there is an isomorphism $f: H\xrightarrow{\sim} H'$ over $R^{+}$ transferring one set of data to the other. 
\end{defn}

Both $D^{\times}$ and  $\GL_n(K)$ act naturally  on the space $\sM^{\LT}_{\infty}$. 
An element $d\in D^{\times}$ acts on the space by sending $\rho$ to $\rho\circ d^{-1}$. The subgroup $\OO_D^{\times}\subseteq D^{\times}$  arises as the group of automorphisms of the formal group $H_0$, and the group  $D^{\times}$ is the group of automorphisms of $H_0$ in the isogeny category. An element $g\in \GL_n(\OO_K)$ acts on the space via sending $\eta$ to $\eta\circ g$. To extend the action to $\GL_n(K)$, let $H$ be a fixed $p$-divisible group over $R^{+}$ and let $S$ be the set of isomorphism classes of pairs $(H', \phi)$  where $H'$ is another $p$-divisible group over $R^{+}$ and $\phi: H\rightarrow H'$ is a quasi-isogeny over $R^{+}$. This set $S$ is in bijection with the set of Galois stable lattices in $V_p(H)$, cf. \cite[Lemme~II.6.1]{Fargues2008-1}. Now for an element $g\in \GL_n(K)$, $\eta(g\cdot \OO_K^{n})$ is a Galois stable lattice in $V_p(H)$. Thus it gives a pair $(H_g,\phi_g)\in S$. Let $\phi_{g,\ast}: V_p(H)\xrightarrow{\sim} V_p(H_g)$ denote the induced isomorphism on the rational Tate modules. The action of $\GL_n(K)$ is defined via 
\begin{equation}\label{eq-GL-action}
	g\cdot [(H,\iota, \rho,\eta)]=[(H_g,\iota,  (\phi_g\text{ mod } p)\circ \rho,\phi_{g,\ast}\circ \eta\circ g )], \quad g\in \GL_n(K). 
\end{equation}
Let $\LT_n=\Spf W\llbracket u_1,\cdots,u_{n-1}\rrbracket$ be the Lubin-Tate deformation space of $H_0$ as defined in \cite{LT} and let $\LT_{n,\breve{K}}$ denote its generic fiber. We have the following diagram obtained from taking quotients on $\sM^{\LT}_{\infty}$: 
\[
\begin{tikzcd}
	& \sM^{\LT}_{\infty} \arrow[dd] \arrow[dr, "\GL_n(\OO_K)"] \arrow[ddd, "\GL_n(K)", bend right=60, looseness=1.3, swap] & \\
	& & \coprod_{\mathbb{Z}} \LT_{n,\breve{K}} \arrow[dl, "\mathbb{Z}"]\\
	& \LT_{n,\breve{K}} \arrow[d, "\text{Gross-Hopkins}"] &\\
	& \P^{n-1}_{\breve{K}} &
\end{tikzcd}
\]
where the map $\LT_{n,\breve{K}}\rightarrow \P^{n-1}_{\breve{K}}$ is the Gross-Hopkins period map from \cite{GH}.  

As we noted earlier, the group $\OO^{\times}_{D}$ comes from $\Aut_{\ol{k}}(H_0)$. If we treat $H_0$ as a functor on $k$-algebras and also take $\Gal(\ol{k}/k)$ into consideration, we get the \textit{Morava stabilizer group} $\G_n:= \OO^{\times}_{D}\rtimes \hat{\Z}$. 
We use $\G^{0}_{n}:=D^{\times}\rtimes \hat{\Z}$ to denote the analogue of $\G_n$ in the isogeny category. These groups fit into the following diagram: 
\[
\begin{tikzcd}
	& 0\arrow[d] & 0\arrow[d] & &\\
	0\arrow[r] & \OO_D^{\times}\arrow[r]\arrow[d] & \G_n\arrow[r]\arrow[d] & \hat{\Z}\arrow[r]\arrow[d] & 0\\
	0\arrow[r] & D^{\times}\arrow[r]\arrow[d] & \G^{0}_{n}\arrow[r]\arrow[d] & \hat{\Z}\arrow[r] & 0\\
	 & \Z\arrow[d]\arrow[r] & \Z\arrow[d] &  &\\
	 & 0 & 0  & &
\end{tikzcd}
\] 
Let $\varphi\in \Gal(\ol{k}/k)$ be the Frobenius map, then  the action of $\Gal(\ol{k}/k)$ on $\sM^{\LT}_{\infty}$ is given by $\varphi\cdot (H,\iota, \rho, \eta)=(H, \iota\circ \varphi, \rho, \eta)$. Furthermore, $\varphi$ induces an isomorphism $f_{\varphi}: H_0\otimes_{W,\iota\circ \varphi}R^+/pR^+\xrightarrow{\sim} H_0\otimes_{W,\iota}R^+/pR^+$, which then gives a quasi-isogeny $\rho_{\varphi}:=\rho\circ f_{\varphi}: H_0\otimes_{W,\iota\circ \varphi}R^+/pR^+\rightarrow H\otimes R^+/pR^+$. The map $(H, \iota\circ \varphi, \rho, \eta)\mapsto (H, \iota, \rho_{\varphi}, \eta)$ then gives an isomorphism $\varphi^{\ast}\sM^{\LT}_{\infty}\xrightarrow{\sim} \sM^{\LT}_{\infty}$ and equips the functor $\sM^{\LT}_{\infty}$ a Weil descent datum.

The moduli stacks we are interested in here are $[\LT_{n,\breve{K}}/\G_n]$ and $[\P^{n-1}_{\breve{K}}/\G^{0}_{n}]$. To state the moduli interpretation of these two stacks, we first define two functors $\sH^{+}, \sH^{0,+}$ from the category of affinoid adic spaces over $\Spa (K,\OO_K)$ to $\Grpd$ by assigning to $\Spa(R,R^{+})$ the following:
\begin{itemize}
	\item Objects in both $\sH^{+}(\Spa(R,R^{+}))$ and $\sH^{0,+}(\Spa(R,R^{+}))$ are formal groups $H$ over $R^{+}$ which are quasi-isogenous to $H_0$ over $R^{+}/pR^+$.
	\item Morphisms in $\sH^{+}(\Spa(R,R^{+}))$ are isomorphisms between the formal groups.
	\item Morphisms in $\sH^{0,+}(\Spa(R,R^{+}))$ are quasi-isogenies between the formal groups.
\end{itemize}
We let $\sH$ be the sheafification of $\sH^{+}$ and let $\sH^{0}$ be the sheafification of $\sH^{0,+}$, both with respect to the pro-\'etale topology. Thus we have two functors 
\[
	\sH, \sH^{0}: \Adic_K\rightarrow \Grpd
\] 
such that pro-\'etale locally they are defined as $\sH^{+}$ and $\sH^{0,+}$ above respectively.

\begin{prop}\label{prop-LT-moduli}
	The quotient stacks $[\LT_{n,\breve{K}}/\G_n]$ and $[\P^{n-1}_{\breve{K}}/\G_n^{0}]$ represent the functors $\mathscr{H}$ and $\mathscr{H}^{0}$ respectively.
\end{prop}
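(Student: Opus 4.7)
The plan is to mirror the argument of Proposition~\ref{prop-Dr-moduli}, interchanging the roles of $\GL_n(K)$ and $D^\times$. The starting input is the moduli description of $\sM^{\LT}_\infty$ by quadruples $(H, \iota, \rho, \eta)$. First I would observe that $\GL_n(\OO_K)$ acts simply transitively on trivializations $\eta: \OO_K^n \xrightarrow{\sim} T_p(H)$, so passing to the $\GL_n(\OO_K)$-quotient of $\sM^{\LT}_\infty$ forgets $\eta$ and yields $\coprod_\Z \LT_{n,\breve{K}}$, parametrizing triples $(H, \iota, \rho)$; absorbing the residual $\Z$, which reindexes the height of $\rho$, yields $\LT_{n,\breve{K}}$ itself, parametrizing triples with $\rho$ of a fixed height.

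Next I would carry out the stacky quotient by $\G_n = \OO_D^\times \rtimes \hat{\Z}$ in two stages. The subgroup $\OO_D^\times \subset D^\times$ acts via $\rho \mapsto \rho \circ d^{-1}$; the same argument used in Proposition~\ref{prop-Dr-moduli} shows that two triples $(H, \iota, \rho)$ and $(H', \iota, \rho')$ are identified in $[\LT_{n,\breve{K}}/\OO_D^\times]$ exactly when $\rho' \circ \rho^{-1} \bmod p$ lifts to an isomorphism $H \xrightarrow{\sim} H'$ over $R^+$. Thus the datum of $\rho$ becomes stackily forgotten, and morphisms in the quotient are precisely isomorphisms of formal groups over $R^+$ whose mod-$p$ reductions remain quasi-isogenous to $H_0$. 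Quotienting further by $\hat{\Z} = \Gal(\breve{K}/K)$ uses the Weil descent datum $\varphi^\ast \sM^{\LT}_\infty \xrightarrow{\sim} \sM^{\LT}_\infty$ to drop $\iota$ and descend from $(\breve{K}, W)$- to $(K, \OO_K)$-algebras. Pro-\'etale sheafifying --- needed because $\eta$ and the compatible choice of $\iota$ exist only pro-\'etale locally --- then produces exactly $\sH$.

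For $[\P^{n-1}_{\breve{K}}/\G_n^0]$ with $\G_n^0 = D^\times \rtimes \hat{\Z}$, the inclusion $\G_n \hookrightarrow \G_n^0$ enlarges $\OO_D^\times$ to the full $D^\times$, introducing the extra factor $D^\times/\OO_D^\times \cong \varpi^\Z$. The $\varpi$-action shifts the height of $\rho$, and the corresponding additional quotient takes $\LT_{n,\breve{K}}$ to $\P^{n-1}_{\breve{K}}$; concretely, $\P^{n-1}_{\breve{K}} = \sM^{\LT}_\infty / \GL_n(K)$, with the Gross--Hopkins period map realizing this quotient at finite level. Moduli-theoretically, the extra $\varpi$-equivalences allow morphisms $H \to H'$ that are invertible only after inverting $p$, i.e., genuine quasi-isogenies, which is precisely the content of $\sH^0$. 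The main technical subtlety of writing this out cleanly lies in the identification of $\P^{n-1}_{\breve{K}}$ with the $\GL_n(K)$-quotient of $\sM^{\LT}_\infty$ via the Gross--Hopkins period map, and in the careful bookkeeping of the pro-\'etale sheafification for groupoid-valued functors; beyond these points the argument proceeds verbatim as on the Drinfeld side.
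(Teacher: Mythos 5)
Your first paragraph is essentially consistent with the paper's argument for $[\LT_{n,\breve{K}}/\G_n] \cong \sH$: quotienting by $\GL_n(\OO_K)$ forgets $\eta$, the $\varpi^{\Z}$-quotient collapses the $\Z$-indexed components, $\OO_D^{\times}$ (together with what came before, i.e.\ all of $D^{\times}$) forgets $\rho$, and $\hat{\Z}$ via the Weil descent datum forgets $\iota$.

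The second paragraph, however, has a genuine gap. You attribute the appearance of quasi-isogeny morphisms in $[\P^{n-1}_{\breve{K}}/\G_n^{0}]$ to the enlargement $\G_n \hookrightarrow \G_n^{0}$, i.e.\ to the extra $\varpi^{\Z} = D^{\times}/\OO_D^{\times}$. This is not where they come from. On the Lubin--Tate side the element $\varpi \in D^{\times}$ acts only via $\rho \mapsto \rho\circ\varpi^{-1}$; it does not change the underlying formal group $H$ at all, so the "extra $\varpi$-equivalences" cannot produce any morphism $H \to H'$ between genuinely distinct formal groups, let alone all quasi-isogenies. Moreover, the $\varpi^{\Z}$-quotient was already spent in your own step passing from $\coprod_{\Z}\LT_{n,\breve{K}}$ to $\LT_{n,\breve{K}}$, so it cannot also account for the passage to $\P^{n-1}_{\breve{K}}$. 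The Gross--Hopkins period map corresponds to the residual $\GL_n(K)/\GL_n(\OO_K)$-quotient, not to $\varpi^{\Z}$.

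The correct mechanism, which the paper uses, is the $\GL_n(K)$-action in \eqref{eq-GL-action}: an element $g\in\GL_n(K)$ replaces $(H,\iota,\rho,\eta)$ by $(H_g,\iota,(\phi_g\bmod p)\circ\rho,\phi_{g,\ast}\circ\eta\circ g)$, where $H_g$ corresponds to the Galois-stable lattice $\eta(g\cdot\OO_K^n)\subset V_p(H)$. The essential input is \cite[Lemme~II.6.1]{Fargues2008-1}: as $g$ ranges over $\GL_n(K)$, the lattice $\eta(g\cdot\OO_K^n)$ runs over all Galois-stable lattices in $V_p(H)$, hence $H_g$ runs over all formal groups over $R^{+}$ quasi-isogenous to $H$. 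It is this, not $\varpi$, that turns the morphisms into quasi-isogenies in the stacky $\GL_n(K)$-quotient and identifies $[\P^{n-1}_{\breve{K}}/\G_n^{0}]$ with $\sH^{0}$. Your proof never invokes this lemma or the effect of $\GL_n(K)$ on $H$, so the key step is missing.
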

\begin{proof}

	From the space $\sM^{\LT}_{\infty}$, quotienting out $D^{\times}$ forgets the quasi-isogeny $\rho$. Quotienting  by $\GL_n(\OO_K)$ further forgets the rigidification $\eta$. Using the Weil descent datum of $\sM^{\LT}_{\infty}$, quotienting out by $\hat{\Z}$ forgets the homomorphism $\iota$. The moduli interpretation of $[\LT_{n,\breve{K}}/\G_n]$ then follows. As for the stack $[\P^{n-1}_{\breve{K}}/\G^{0}_{n}]$, notice that the only difference from the earlier quotient is we are now quotienting $\sM^{\LT}_{\infty}$ by $\GL_n(K)$. Let $g\in \GL_n(K)$ and let $(H,\iota, \rho, \eta)$ be a point of $\sM^{\LT}_{\infty}$. Using \cite[Lemme~II.6.1]{Fargues2008-1} and the action of $\GL_n(K)$ described in (\ref{eq-GL-action}), as $\eta(g\cdot \OO_K^{n})$ runs through all  Galois stable lattices in $V_p(H)$, $H_g$ also runs through all formal groups over $R^{+}$ which are quasi-isogenous to $H$. This gives the moduli interpretation of $[\P^{n-1}_{\breve{K}}/\G^{0}_{n}]$.  
\end{proof}

\subsection{The isomorphism between the two towers}
The Drinfeld tower and the Lubin-Tate tower are related through the following celebrated theorem of Scholze-Weinstein:
\begin{thm}[{\cite[Theorem~E]{SW2013-1}}]\label{thm-two-towers}
	The spaces $\sM^{\Dr}_{\infty}$ and $\sM^{\LT}_{\infty}$ are isomorphic as perfectoid spaces. 
\end{thm}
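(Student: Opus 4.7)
The plan is to reduce both moduli problems to the same linear-algebraic data via a classification of $p$-divisible groups over perfectoid rings. I would first restrict attention to perfectoid test objects $(R,R^{+})$ over $(\breve{K},\OO_{\breve{K}})$; since both functors at infinite level are expected to be representable by perfectoid spaces, it suffices to compare them on this smaller category and then sheafify. The main technical input is the Scholze--Weinstein classification theorem, which asserts that a $p$-divisible group $H$ over $R^{+}$ is functorially equivalent to a pair $(T,W)$, where $T$ is a finite projective $\Z_p$-module (the Tate module $T_p(H)$) and $W\subset T\otimes_{\Z_p}R$ is the kernel of the Hodge--Tate map $T_p(H)\otimes_{\Z_p}R\to \Lie(H^{\vee})\otimes_{R^{+}}R$. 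This classification is the conceptual heart of the theorem and is by far the most substantial step, requiring almost-purity and an analysis of the universal cover $\wt{H}=\invlim_{[p]}H$.

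With the classification in hand, I would unpack both moduli interpretations. On the Lubin--Tate side, the trivialization $\eta\colon \OO_K^{n}\xrightarrow{\sim}T_p(H)$ identifies $T_p(H)\otimes R$ with $R^{n}$, and because $H$ has dimension $1$, the Hodge--Tate submodule $W_H$ is a rank-$(n-1)$ direct summand. The datum $(\iota,\rho)$ then rigidifies this to give a point of a space of hyperplanes in $K^{n}\otimes R$, recovering the image of $\sM^{\LT}_{\infty}$ in $\P^{n-1}_{\breve{K}}$. On the Drinfeld side, the trivialization $\eta\colon \OO_D\xrightarrow{\sim}T_p(G)$ and the "special" condition on $G$ force the Hodge--Tate submodule $W_G\subset \OO_D\otimes R$ to split, via the isomorphism $\OO_D\otimes_{\OO_K}W\cong M_n(W)$, into $n$ components corresponding to the $n$ embeddings of $\OO_{K^{\mathrm{unr}}}$ into $\OO_D$, each component being a rank-$(n-1)$ hyperplane — producing exactly the data of a point of $\cH^{n-1}_{\breve{K}}$ times a rigidification.

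The crucial identification step is to match the two parameterizations on perfectoid points. Here I would exploit the fact that both sides parameterize, after rigidification, the same flag-type data: a free rank-$n$ (or rank-$1$ as $\OO_D$-module) lattice together with a codimension-$1$ subspace of its generic fibre. The exchange of roles between the dimension-$1$ formal group $H$ (with an extra full $\OO_K^{n}$ trivialization) and the dimension-$n$ special formal $\OO_D$-module $G$ (with an $\OO_D$-trivialization) is precisely the Morita duality $D\otimes_K \breve{K}\cong M_n(\breve{K})$ applied at the level of Tate modules, paired with the splitting of $W_G$ described above. Once the bijection of functors is established, I would verify its equivariance with respect to the actions of $\GL_n(K)\times D^{\times}$ and with respect to the Weil descent data, both of which are formal consequences of the construction.

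The hard part will be the classification of $p$-divisible groups over perfectoid bases in terms of $(T,W)$; this is where all of the serious perfectoid and integral $p$-adic Hodge theory enters. Everything after that — the Morita-style rewriting of $\OO_D$-linear Hodge--Tate data in terms of ordinary rank-$n$ flag data, the check that the rigidifications match, and the equivariance — is essentially a bookkeeping exercise in multilinear algebra over $\OO_D$.
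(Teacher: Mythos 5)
The paper does not give a proof of this statement: it is quoted verbatim from \cite[Theorem~E]{SW2013-1} and used as a black box, with the only original content nearby being the (almost immediate) Corollary~\ref{cor-iso} obtained by passing to quotient stacks. There is therefore no ``paper's own proof'' to compare your argument against, and any comparison has to be with the Scholze--Weinstein paper itself.

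On its own terms, your sketch is a reasonable bird's-eye view of the Scholze--Weinstein strategy, but it compresses the genuinely hard parts into one sentence and is imprecise in a way worth flagging. The classification of $p$-divisible groups in terms of pairs $(T,W)$ is proved by Scholze--Weinstein over $\OO_C$ for $C$ an algebraically closed perfectoid field, not over an arbitrary perfectoid $(R,R^{+})$; extending it to the relative setting needed here is itself a nontrivial step (and is part of what makes the original paper long). Likewise, the assertion that the Morita rewriting, the matching of rigidifications, the equivariance for $\GL_n(K)\times D^{\times}$, and the compatibility with Weil descent are ``essentially a bookkeeping exercise'' undersells what is, in the source, a careful argument involving integral structures and period morphisms. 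If you intend this as a replacement for the citation you would need to fill in those steps; as a heuristic gloss on why the theorem is true, it is fine, but the paper you are reading never attempts one.
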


Upon taking quotients of $\sM^{\Dr}_{\infty}\cong \sM^{\LT}_{\infty}$ by subgroups of $\GL_n(K)\times D^{\times}$ and taking descent into consideration, we get:
\begin{cor}\label{cor-iso}
There are isomorphisms of quotient stacks
\[
	[\LT_{n,\breve{K}}/\G_n]\cong [\cH^{n-1}_{K}/\GL_n(\OO_K)]\quad\text{and}\quad	[\P^{n-1}_{\breve{K}}/\G_n^{0}]\cong [\cH^{n-1}_{K}/\GL_n(K)].
\] 
\end{cor}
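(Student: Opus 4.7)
The plan is to apply the $(\GL_n(K)\times D^\times)$-equivariant isomorphism $\sM^{\Dr}_\infty \cong \sM^{\LT}_\infty$ of Theorem~\ref{thm-two-towers}, which moreover intertwines the Weil descent data built on the two towers, and then form a common stack quotient by an appropriate subgroup of $\GL_n(K)\times D^\times \rtimes \hat{\Z}$. Because the roles of $\GL_n(K)$ and $D^\times$ are exchanged on the two sides---the $\rho$-group on the Drinfeld tower is the $\eta$-group on the Lubin--Tate tower, and vice versa---the very same quotient will present itself as two rather different looking stacks, producing the claimed isomorphisms.

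For the first isomorphism, I form the quotient of both sides by $H:=\GL_n(\OO_K)\times (D^\times\rtimes \hat{\Z})$. On the Drinfeld side, the factor $D^\times\rtimes \hat{\Z}$ acts through the $\eta$-variable together with the Weil descent datum: the diagram in Section~\ref{section-2.1} shows that $\sM^{\Dr}_\infty/(D^\times\rtimes \hat{\Z})=\cH^{n-1}_K$, and the residual $\GL_n(\OO_K)$-action then yields $[\cH^{n-1}_K/\GL_n(\OO_K)]$. On the Lubin--Tate side, quotienting first by $\GL_n(\OO_K)$ gives $\coprod_{\Z}\LT_{n,\breve{K}}$, with $\Z$ indexing the height of $\rho$; the subgroup $D^\times/\OO_D^\times\cong \Z$ of the remaining $D^\times\rtimes \hat{\Z}$ shifts the components transitively, while each component is stabilized precisely by $\G_n=\OO_D^\times\rtimes \hat{\Z}$. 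The standard reduction for transitive component-permuting actions collapses this to the stack quotient $[\LT_{n,\breve{K}}/\G_n]$.

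For the second isomorphism, I take $H:=\GL_n(K)\times (D^\times\rtimes \hat{\Z})$. The Drinfeld-side analysis of the $D^\times\rtimes \hat{\Z}$-quotient is unchanged, so the remaining $\GL_n(K)$-action yields $[\cH^{n-1}_K/\GL_n(K)]$. On the Lubin--Tate side, the full $\GL_n(K)$-quotient is $\P^{n-1}_{\breve{K}}$ via the Grothendieck--Messing period map (factoring as $\sM^{\LT}_\infty\to\LT_{n,\breve{K}}\xrightarrow{\pi_{\mathrm{GH}}}\P^{n-1}_{\breve{K}}$), and the residual $\G^{0}_{n}=D^\times\rtimes \hat{\Z}$-action delivers $[\P^{n-1}_{\breve{K}}/\G^{0}_{n}]$.

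The only substantive point to verify is the equivariance---namely, that the Scholze--Weinstein isomorphism intertwines not just the $\GL_n(K)\times D^\times$-actions but also the two Weil descent data described in Section~\ref{section-2}. Granting this, which is part of the content of \cite[Theorem~E]{SW2013-1}, the corollary reduces to the formal statement that stack quotients of equivariantly isomorphic objects are isomorphic, together with the bookkeeping above identifying the common quotient on each side.
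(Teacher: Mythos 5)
Your proposal takes essentially the same approach as the paper, which offers only the one-sentence justification preceding the corollary; your write-up is the natural unwinding of that sentence, tracking how the quotient diagrams in Section~\ref{section-2} collapse to the stated stacks on each side. You also correctly flag the only nontrivial input---that the Scholze--Weinstein isomorphism is $\GL_n(K)\times D^\times$-equivariant and compatible with the Weil descent data, which the paper's Theorem~\ref{thm-two-towers} does not spell out but which is part of \cite[Theorem~E]{SW2013-1}; the only slip is calling the period map $\LT_{n,\breve{K}}\to\P^{n-1}_{\breve{K}}$ the Grothendieck--Messing map where the paper uses Gross--Hopkins (the former is the general de~Rham period map for Rapoport--Zink spaces, the latter its name in the Lubin--Tate case).
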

Based on their moduli interpretation, we will simply refer to the first pair of stacks $[\LT_{n,\breve{K}}/\G_n]$ and $[\cH^{n-1}_{K}/\GL_n(\OO_K)]$ as the \textit{isomorphism stacks}; similarly we  shall call the two stacks in the second pair as \textit{isogeny stacks}. 

\begin{rem}\label{rem-diamond}
	The spaces in the quotient stacks from Corollary \ref{cor-iso} are the generic fibers of their formal models. One can also take the "diamond" generic fibers instead and reinterpret Corollary \ref{cor-iso} as 
	\[
		[\LT^{\diamond}_{n,\breve{K}}/\G_n]\cong [\cH^{n-1,\diamond}_{K}/\GL_n(\OO_K)]\quad\text{and}\quad [\P^{n-1,\diamond}_{\breve{K}}/\G^{0}_{n}]\cong [\cH^{n-1,\diamond}_{K}/\GL_n(K)].
	\] 
	Such stacks have analogous moduli interpretations as given earlier, except restricting the source from the category  $\Adic_K$ to its subcategory of perfectoid spaces over $\Spa(K,\OO_K)$, cf. \cite[Section~3.7]{BSSW2024-2}. 
\end{rem}

\section{Pro-\'etale cohomology of some period domains}\label{section-3}
Our goal of this section is to review the pro-\'etale cohomology, both $\ell$-adic and $p$-adic, of certain rigid spaces. After that, we explain how to equip such cohomology groups with a topology using the theory of condensed mathematics of Clausen--Scholze.  
\subsection{Results of Colmez--Dospinescu--Nizio\l}\label{subsection-3.1}
The study of the cohomology of the Drinfeld spaces traces  back to Schneider and Stuhler \cite{SS1991-1}, where they give a general description of the cohomology groups for any cohomology theory satisfying certain axioms. Although the $p$-adic pro-\'etale cohomology theory does not satisfy the axiom  of  "homotopy invariance" in \cite{SS1991-1}, Colmez--Dospinescu--Nizio{\l} \cite{CDN2020-1} still manage to give a description of $H^{\ast}_{\proet}(\cH ^{n-1}_{C},\Qp)$ in terms of differential forms and the generalized Steinberg representations, which is the key result we aim to highlight here.  Along the way, we will also recall the pro-\'etale cohomology of some other spaces that already  appeared in Section \ref{section-2}.

The pro-\'etale cohomology of projective spaces can be deduced directly from their \'etale cohomology, which is well-understood, see e.g. \cite{Niziol2021-1}.
\begin{thm}\label{thm-projective}
	Let $\ell$ be a prime (possibly equal to $p$) and let $r$ be an integer. When $r$ is even and $0\leq r\leq 2n$, we have
		\[
			H^{r}_{\proet}(\P^{n}_{C}, \Ql)\cong \Ql\left(-\frac{r}{2}\right)
		\] 
	For all other $r$, the cohomology group vanishes. 
\end{thm}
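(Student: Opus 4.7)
The plan is to reduce the statement to the classical \'etale computation for projective spaces and then upgrade to pro-\'etale cohomology. By definition, $H^{r}_{\proet}(\P^{n}_{C},\Ql)$ is the sheaf cohomology of $\widehat{\Ql}$ on the pro-\'etale site. For $\ell\neq p$, the natural projection $\nu\co (\P^{n}_{C})_{\proet}\to (\P^{n}_{C})_{\et}$ satisfies $R\nu_{\ast}\widehat{\Ql} = \Ql$, so the pro-\'etale group coincides with continuous \'etale $\ell$-adic cohomology. For $\ell = p$, an analogous comparison holds on smooth proper rigid analytic spaces as a consequence of Scholze's primitive comparison theorem; equivalently, since $H^{\ast}_{\et}(\P^{n}_{C},\Zp)$ is finitely generated over $\Zp$ in every degree, the pro-\'etale and continuous \'etale versions agree after inverting $p$.

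With the comparison in hand, I would compute $H^{\ast}_{\et}(\P^{n}_{C},\Ql)$ by induction on $n$. The closed-open decomposition $\P^{n-1}_{C}\hookrightarrow \P^{n}_{C}\hookleftarrow \A^{n}_{C}$ yields a Gysin long exact sequence
\[
\cdots \to H^{r-2}_{\et}(\P^{n-1}_{C},\Ql(-1)) \to H^{r}_{\et}(\P^{n}_{C},\Ql) \to H^{r}_{\et}(\A^{n}_{C},\Ql) \to \cdots
\]
Combined with the $\A^{1}$-contractibility of \'etale cohomology of affine space over an algebraically closed field (valid for both $\ell$-adic and $p$-adic coefficients) and the inductive hypothesis, one recovers $H^{r}_{\et}(\P^{n}_{C},\Ql)\cong \Ql(-r/2)$ in even degrees $0\leq r\leq 2n$ and vanishing otherwise, the Tate twist $(-r/2)$ appearing naturally from the codimension-one Gysin maps. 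In the $\ell = p$ case one may alternatively appeal to the Hodge--Tate spectral sequence
\[
E_{2}^{i,j} = H^{i}(\P^{n}_{C},\Omega^{j}_{\P^{n}_{C}})(-j) \;\Longrightarrow\; H^{i+j}_{\proet}(\P^{n}_{C},\Qp)\otimes_{\Qp}C,
\]
which degenerates immediately because $H^{i}(\P^{n}_{C},\Omega^{j})$ is one-dimensional when $i=j\in[0,n]$ and vanishes otherwise; this yields $H^{r}_{\proet}(\P^{n}_{C},\Qp)\otimes_{\Qp} C\cong C(-r/2)$, and finite-dimensionality plus Galois equivariance pins down $H^{r}_{\proet}(\P^{n}_{C},\Qp)\cong \Qp(-r/2)$.

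The only genuinely non-trivial ingredient is the pro-\'etale/\'etale comparison in the $p$-adic setting, which is not formal but follows from standard results in the Scholze school and is explicitly documented in \cite{Niziol2021-1}. Once it is granted, the computation collapses to familiar features of $\P^{n}$, either via its Schubert cell decomposition or its Hodge structure, and presents no further obstacle.
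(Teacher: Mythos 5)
Your overall approach --- reduce to ordinary \'etale cohomology and then compute --- matches the paper, which at this point only cites \cite{Niziol2021-1} without giving an argument, so you are filling in what the paper leaves implicit. The $\ell\neq p$ branch and the finiteness-based reduction for $\ell=p$ (via primitive comparison for proper smooth rigid analytic spaces) are both correct.

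There are, however, two gaps in the $p$-adic part. First, the Gysin induction as written would fail if carried out in the rigid analytic category: the affine piece $\A^n_C$ of the stratification is then a Stein rigid space, and its $p$-adic pro-\'etale cohomology is \emph{not} concentrated in degree zero --- the paper's own Theorem~\ref{thm-ball}(2) shows that the open ball has $H^r_{\proet}(\ocirc{\B}^n_C,\Qp(r))\cong \Omega^{r-1}(\ocirc{\B}^n_C)/\ker d$ for $r\geq 1$, and the analytic affine space behaves in the same way. The Gysin argument must therefore be run in the \emph{algebraic} category, where algebraic $\A^n_C$ is acyclic for $\Zp$-\'etale cohomology because $C$ has characteristic zero, and the result transported to the rigid analytic $\P^n_C$ by the comparison between algebraic and analytic \'etale cohomology for proper varieties. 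Your phrase ``affine space over an algebraically closed field'' suggests you have the algebraic side in mind, but the transfer step is not stated and is precisely where the subtlety lies. Second, in the Hodge--Tate alternative, knowing that a one-dimensional $\cG_K$-representation $V$ satisfies $V\otimes_{\Qp}C\cong C(-r/2)$ determines only its Hodge--Tate weight, not the representation itself: an unramified twist of $\Qp(-r/2)$ is not excluded. The Galois structure should instead be read off from the Galois-equivariance of the GAGA comparison with the algebraic side, or from the iterated cup power of the Chern class $c_1(\OO(1))\in H^2_{\proet}(\P^n_C,\Qp(1))$, which canonically trivializes $H^{2m}_{\proet}(\P^n_C,\Qp(m))$.
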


The space $\LT_{n,\breve{K}}$ arises as the generic fiber of the Lubin-Tate space $\LT_n$ and is  isomorphic to the $(n-1)$-dimensional open unit ball $\ocirc{\B}^{n-1}_{\breve{K}}$. Let $\Omega^{r} $ denote the sheaf of differential $r$-forms and  let $d: \Omega^{r}\rightarrow \Omega^{r+1}$ denote the differential map. The cohomology of open unit balls is given as follows.  
\begin{thm}\label{thm-ball}
Let $r\geq 0$ be an integer, we have 
\begin{arenumerate}
	\item $H^{0}_{\proet}(\ocirc{\B}^{n}_{C},\Ql)\cong \Ql$ and all higher cohomology groups vanish, for  $\ell\neq p$.
	\item $H^{0}_{\proet}(\ocirc{\B}^{n}_{C},\Qp)\cong \Qp$ and $H^{r}_{\proet}(\ocirc{\B}^{n}_{C},\Qp(r))\cong \Omega ^{r-1}(\ocirc{\B}^{n}_{C})/\ker d$ for $r\geq 1$. 
\end{arenumerate}
\end{thm}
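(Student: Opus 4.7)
The plan is to treat (1) and (2) by fundamentally different tools, reflecting the sharp dichotomy between $\ell$-adic and $p$-adic pro-\'etale cohomology of rigid analytic spaces. For (1), the key input is Berkovich's contractibility theorem: for $\ell \neq p$, the open unit ball satisfies $H^r_{\et}(\ocirc{\B}^n_C, \Z/\ell^k) = 0$ for $r > 0$ and $H^0_{\et}(\ocirc{\B}^n_C, \Z/\ell^k) = \Z/\ell^k$. Since $\ocirc{\B}^n_C$ is Stein, the Milnor-type short exact sequence
\[
0 \to R^1\!\invlim_k H^{r-1}_{\et}(\ocirc{\B}^n_C,\Z/\ell^k) \to H^r_{\proet}(\ocirc{\B}^n_C,\Z_\ell) \to \invlim_k H^r_{\et}(\ocirc{\B}^n_C,\Z/\ell^k) \to 0
\]
relates the two theories. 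Both outer terms vanish in positive degrees (the ML condition holds trivially), and in degree zero one recovers $\Z_\ell$; inverting $\ell$ completes part (1).

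For (2), the main tool is the fundamental short exact sequence of Colmez--Dospinescu--Nizio\l{} \cite{CDN2020-1} describing the $p$-adic pro-\'etale cohomology of a smooth Stein space $X$ over $C$, which takes the shape
\[
0 \to \Omega^{r-1}(X)/\ker d \to H^r_{\proet}(X,\Q_p(r)) \to (\text{Hyodo--Kato piece})_r \to 0
\]
for $r \geq 1$, where the rightmost term is a Frobenius-eigenspace (with a monodromy condition) inside the Hyodo--Kato cohomology of $X$. The plan is to show that the third term vanishes for $X = \ocirc{\B}^n_C$ when $r \geq 1$. Concretely, the open ball admits a natural formal model $\Spf \OO_C\llbracket u_1,\dots,u_n \rrbracket$ whose special fibre is a single point; hence its Hyodo--Kato cohomology is concentrated in degree zero and equals $\breve{K}$ with trivial monodromy and the canonical Frobenius. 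The Frobenius-fixed part then only contributes $\Q_p$ in degree zero, which together with connectedness gives $H^0_{\proet}(\ocirc{\B}^n_C,\Q_p) = \Q_p$; for $r \geq 1$, the vanishing of the third term in the fundamental sequence forces the isomorphism $H^r_{\proet}(\ocirc{\B}^n_C,\Q_p(r)) \cong \Omega^{r-1}(\ocirc{\B}^n_C)/\ker d$.

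The main obstacle is accessing the fundamental sequence in a form that applies to the open ball and verifying that the Hyodo--Kato cohomology of the natural formal model indeed reduces to the trivial Frobenius--monodromy module in degree zero; the proof of the fundamental sequence itself is the technical heart of \cite{CDN2020-1} and is the nontrivial input here. Once this piece is in place, the computation is entirely formal: the eigenvalue condition on a degree-zero Hyodo--Kato group cannot produce a nonzero class in degrees $\geq 1$, so the third term collapses and the result follows by direct inspection. A small compatibility check is required to ensure the Frobenius on the formal model is identified correctly so that the eigenvalue condition $\varphi = p^r$ is vacuous in positive degree; this is straightforward from the construction.
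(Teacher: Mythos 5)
Your proposal is correct in outline but takes a different route from the paper, which for part~(1) simply invokes the known computation for the \emph{closed} ball (via an exhaustion of the open ball by closed balls), and for part~(2) cites \cite[Theorem~3]{CN2020-1} directly rather than re-deriving it. Your treatment of~(1) via Berkovich acyclicity plus the Milnor sequence is essentially the same calculation, just phrased through the étale cohomology of the open ball instead of the closed one; both are fine. For~(2), you are in effect reproving the cited theorem of Colmez--Nizio{\l} by running the CDN fundamental exact sequence and killing the Hyodo--Kato term. That is a legitimate approach, and it is, I believe, close to how the cited result is actually proved. The one place you should tighten is the Hyodo--Kato computation: the formal model you write, $\Spf\OO_C\llbracket u_1,\dots,u_n\rrbracket$, lives over $\OO_C$, whereas Hyodo--Kato theory is built from (weak) formal models over a complete DVR such as $W=\OO_{\breve K}$; one should instead use $\Spf W\llbracket u_1,\dots,u_n\rrbracket$ and then base change. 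Moreover the open ball is not quasi-compact, so ``special fibre is a point'' does not literally give you $H^{\ast}_{\HK}$; you need either the exhaustion by closed balls (whose Hyodo--Kato cohomology is trivial and for which Mittag--Leffler holds), or the Hyodo--Kato isomorphism $H^r_{\HK}\otimes_{\breve K} C\cong H^r_{\dR}$ together with the acyclicity of $\Omega^{\bullet}$ on the open ball. Either route gives $H^r_{\HK}(\ocirc{\B}^n)=0$ for $r\geq 1$ and $H^0_{\HK}=\breve K$ with the canonical Frobenius, after which your argument goes through (including the $r=0$ case via $(B^+_{\st})^{N=0,\varphi=1}=\Qp$). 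So: correct plan, minor gap in the Hyodo--Kato step that is readily repaired.
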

\begin{proof}
	The $\ell$-adic pro-\'etale cohomology of the open unit ball follows directly from the $\ell$-adic pro-\'etale cohomology of the closed ball. For the $p$-adic case, see \cite[Theorem~3]{CN2020-1}. 
\end{proof}

Before describing the pro-\'etale cohomology of  Drinfeld spaces, we first recall the following class of $\GL_n(K)$-representations; see also \cite[Section~5.2]{CDN2020-1} for more details. Firstly,  let $\Delta =\{1,\cdots,n-1\}$,  which can be identified with the set of simple roots of $\GL_n(K)$. Let $W$ be its Weyl group and let $B\subseteq \GL_n(K)$ be the upper triangular Borel subgroup. For a subset $I\subseteq \Delta $, let $W_I\subseteq W$  be the subgroup generated by permutations $(i,i+1)$ for $i\in I$ and let $P_I=BW_IB$ be the associated parabolic subgroup. Then for any ring $A$ and any subset $I\subseteq \Delta $, the \textit{generalized Steinberg representation} (associated to $I$) is defined as 
\[
	\Sp_I(A)=\frac{C^{\infty}(G/P_I, A)}{\sum^{}_{i\in \Delta \bs I}C^{\infty}(G/P_{I\cup \{i\}}, A)}
\] 
where $C^{\infty}$ denotes the set of smooth functions. For $r\in \{0,\cdots, n-1\}$, we simply write  $\Sp_r(A)$ for the generalized Steinberg representation associated to the subset $\{1,\cdots,n-1-r\}\subseteq \Delta $. Notice that when $r=0$, its associated set is simply $\Delta $ and $\Sp_0(A)$ is the trivial representation over $A$. The ring $A$ we consider will always be a topological ring, and this will give both $\Sp_I(A)$ and its dual $\Sp_I(A)^{\ast}$ a topology.  As discussed in \cite[Section~5.2.2]{CDN2020-1}, when $A=\Qp$, $\Sp_I(A)$ is an $\operatorname{LF}$-space and $\Sp_I(A)^{\ast}$ is a Fr\'echet space. 

Now for $\ell\neq p$, the following theorem can be directly deduced from \cite{SS1991-1} as the $\ell$-adic pro-\'etale cohomology theory satisfies the axioms listed there.  
\begin{thm}[Schneider-Stuhler]\label{thm-SS}
	Let  $\ell\neq p$ be a prime.  There is a $\GL_n(K)\times \cG_K$-equivariant isomorphism
	\[
		H^{r}_{\proet}(\HH^{n-1}_{C}, \Ql(r))\cong \Sp_r(\Ql)^{\ast}
	\] 
	for every integer $r\geq 0$. 
\end{thm}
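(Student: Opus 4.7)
The plan is to reduce to the general theorem of Schneider--Stuhler \cite{SS1991-1}, which computes $H^r$ of the Drinfeld space for any cohomology theory $F$ satisfying a short list of axioms and always outputs the dual of the generalized Steinberg representation, with the coefficient in degree $r$ determined by the value of $F$ on the affinoid torus $\G_m^r$. For $F = H^{*}_{\proet}(-,\Ql)$ with $\ell \neq p$, this coefficient is $\Ql(-r)$, so after applying the Tate twist $(r)$ in the statement one obtains the untwisted dual Steinberg as claimed.

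The first task is to verify that $\ell$-adic pro-\'etale cohomology satisfies the Schneider--Stuhler axioms. For $\ell \neq p$ and a smooth rigid analytic space $X$ over $C$, pro-\'etale cohomology with $\Ql$-coefficients coincides with Jannsen-style continuous \'etale cohomology (the inverse limit of $H^{*}_{\et}(X,\Z/\ell^{n})$ tensored with $\Ql$), so the properties to check are homotopy invariance of the form $H^{*}_{\proet}(\B^{n}\times X,\Ql)\cong H^{*}_{\proet}(X,\Ql)$ (this is exactly the axiom that fails for $p$-adic coefficients but holds for $\ell \neq p$), Mayer--Vietoris for admissible coverings, compatibility with Stein exhaustions, and the standard computation of the cohomology of the affinoid torus $\G_m^r$. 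All of these are standard properties of $\ell$-adic \'etale cohomology of rigid analytic spaces.

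With the axioms in place, Schneider--Stuhler's argument proceeds by covering $\cH^{n-1}_C$ by an increasing admissible family of affinoid subdomains indexed by simplices of the Bruhat--Tits building of $\PGL_n(K)$. The resulting spectral sequence has $E_1$-terms controlled via homotopy invariance by the cohomology of affinoid tori, and abuts to the dual Steinberg representation tensored with the Tate twist picked up from the torus computation. The $\GL_n(K)$-action on $\cH^{n-1}_C$ permutes the cover compatibly with its action on the building, producing the $\GL_n(K)$-equivariant identification with $\Sp_r(\Ql)^{\ast}$. Galois equivariance is automatic: $\cH^{n-1}$ is defined over $K$, so $\cG_K$ acts naturally on the cohomology of its base change to $C$, and the Tate twist $(r)$ in the statement records exactly the Galois action coming from the top cohomology of $\G_m^r$.

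The main obstacle I expect is not in the overall structure but in the rigorous verification of the axioms for pro-\'etale cohomology, especially the behavior with respect to the derived limit arising from the Stein exhaustion, which requires some care with the pro-\'etale topos on non-quasi-compact rigid spaces. The remainder is essentially a direct invocation of \cite{SS1991-1}.
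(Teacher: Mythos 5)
Your proposal is correct and takes essentially the same route as the paper: the paper's entire justification is that $\ell$-adic pro-\'etale cohomology satisfies the Schneider--Stuhler axioms of \cite{SS1991-1}, from which the statement follows directly. You have simply spelled out the axiom verification and the origin of the Tate twist in more detail than the paper does.
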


As for the $p$-adic case, we have the following result of Colmez--Dospinescu--Nizio{\l}. 
\begin{thm}[{\cite[Theorem~5.13]{CDN2020-1}}]\label{thm-CDN}
	There is a strictly exact sequence of $\GL_n(K)\times \GG_K$-Fr\'echet spaces 
	\[
		0\rightarrow \Omega ^{r-1}(\HH^{n-1}_{C})/\ker d\rightarrow H^{r}_{\proet}(\HH^{n-1}_{C},\Qp(r))\rightarrow \Sp_r(\Qp)^{\ast}\rightarrow 0
	\] 
	for all integers $r\geq 0$. 
\end{thm}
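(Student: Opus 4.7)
The plan is to follow the $p$-adic comparison strategy: express the $p$-adic pro-\'etale cohomology of a smooth Stein variety $X/C$ via the Colmez--Nizio{\l} ``fundamental exact sequence'' which separates a de Rham contribution from a Hyodo--Kato contribution. Concretely, I would first invoke the short exact sequence
\[
0 \rightarrow \Omega^{r-1}(X)/\ker d \rightarrow H^{r}_{\proet}(X, \Qp(r)) \rightarrow \bigl(H^{r}_{\HK}(X)\cotimes_{\breve{K}} \Bst\bigr)^{N=0,\, \varphi = p^{r},\, \Fil^{r}} \rightarrow 0
\]
valid for smooth Stein rigid analytic spaces over $C$; this follows from the comparison between geometric pro-\'etale and syntomic cohomology, combined with the Fontaine fundamental sequence relating $\Bst$-semistable data to $\BdRp$-filtered pieces. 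This reduces Theorem \ref{thm-CDN} to identifying the rightmost term, applied to $X = \cH^{n-1}_{C}$, with $\Sp_r(\Qp)^{\ast}$ as a topological $\GL_n(K)\times \cG_K$-module.

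Second, to handle the Hyodo--Kato side I would use the well-known semistable formal model of $\cH^{n-1}_K$ (originally due to Mustafin/Drinfeld) whose special fiber has components indexed by vertices of the Bruhat--Tits building of $\GL_n(K)$. The Steenbrink/weight spectral sequence attached to this model makes $H^{\ast}_{\HK}(\cH^{n-1}_C)$ computable as a representation of $\GL_n(K)$ in terms of combinatorial data on the nerve of the building. A Schneider--Stuhler-type argument, now run entirely on this simplicial set, identifies the associated graded pieces with generalized Steinberg representations; the input here is parallel to the $\ell$-adic computation already stated in Theorem \ref{thm-SS}. Taking the eigenspace cut out by $N=0$, $\varphi = p^{r}$, and the filtration step $\Fil^{r}$ then singles out a single Steinberg piece, whose dual (upon taking continuous duals over $\Bst$ and descending) I would identify with $\Sp_r(\Qp)^{\ast}$ by matching $\GL_n(K)$-equivariant generators.

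The principal obstacle will be upgrading this algebraic short exact sequence to a \emph{strict} short exact sequence of Fr\'echet spaces. All three terms are infinite-dimensional, with topologies coming from genuinely different sources: the Stein exhaustion on the de Rham piece, the $\BdRp$-adic and projective-system topology on the middle, and the smooth-induction topology on Steinberg representations. My approach would be to fix a Stein exhaustion $\cH^{n-1}_{C} = \bigcup_{h} U_h$ by quasi-compact affinoids, establish the sequence with uniform strictness estimates on each $U_h$, and then pass to the inverse limit using Mittag--Leffler control on the transition maps; Fr\'echet duals of $\mathrm{LF}$-spaces behave well under such limits. Throughout this topological bookkeeping, one must track the $\GL_n(K)\times \cG_K$-equivariance of every identification, and this is where I expect the bulk of the technical effort to lie.
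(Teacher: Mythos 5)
The paper does not prove this theorem; it cites it verbatim as Theorem~5.13 of Colmez--Dospinescu--Nizio{\l}, so there is no in-paper argument to compare against. What you sketch is, in broad strokes, the strategy of that reference: a pro-\'etale-to-syntomic comparison plus Hyodo--Kato theory yielding a fundamental exact sequence for smooth Stein rigid spaces, followed by a computation of $H^{r}_{\HK}(\cH^{n-1}_{C})$ to identify the third term.

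Two places in your outline deserve flagging as genuine gaps rather than mere compression. First, the exact sequence you ``invoke'' at the start is not an available black box for Stein spaces over $C$; proving it is the main general theorem of CDN's paper, and almost all of the syntomic, $\BdRp$-filtration, and Mittag--Leffler/Fr\'echet bookkeeping you push to the end of your argument is really happening inside the proof of that sequence, not downstream of it. (Also, as written, the condition $\Fil^{r}$ is not meaningful on $\Bst$ itself; one must first map into $\BdR$ and impose the filtration there, and CDN are careful about the topological completion $\wh{\tB}^{+}_{\st}$ rather than the classical $\Bst$.) Second, and more substantively, the passage from the $(N=0,\ \varphi=p^{r},\ \Fil^{r})$-eigenspace to $\Sp_r(\Qp)^{\ast}$ does not go by running a Schneider--Stuhler combinatorial argument directly at the level of Hyodo--Kato cohomology of the special fiber. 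CDN instead identify $H^{r}_{\HK}(\cH^{n-1}_{C})$ with $\Sp_r(\Qp)^{\ast}\cotimes_{\Qp}\breve{K}$ by comparison with de~Rham cohomology (Schneider--Stuhler, Gro{\ss}e-Kl\"onne), and the heart of the matter is then showing $N=0$ and that $\varphi$ acts with slope exactly $r$ on this piece; only once that precise $(\varphi,N)$-structure is pinned down does the Bloch--Kato fundamental exact sequence $0\to\Qp(r)\to\Bcris^{\varphi=p^{r}}\to\BdR/\Fil^{r}\BdR\to 0$ collapse the period-ring tensor factor to $\Qp$ and leave $\Sp_r(\Qp)^{\ast}$. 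Your sketch gestures at the eigenspace cutting out ``a single Steinberg piece'' but does not supply the $(\varphi,N)$-computation that makes this work, and without it the identification is not established.
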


\subsection{Topology and condensed mathematics}\label{subsection-3.2}
After seeing the various cohomology groups from the previous section, one might  ask: what is the topology on such cohomology groups, or more fundamentally, why should these cohomology groups possess a natural topology at all? The theory of condensed mathematics developed by Clausen and Scholze provides a natural way to endow cohomology groups with a topology. In this section, we briefly review some  notions in condensed mathematics (following \cite{CS2019-1}) that we will need later  and outline how a natural topology arises for cohomology groups under consideration.

Let $\ast_{\proet}$ be the pro-\'etale site of a point, which is equivalent to the category of profinite sets. A condensed set/group/ring is a sheaf over $\ast_{\proet}$ with values in $\Set/\Grp/\Ring$. We denote the category of condensed sets by $\Cond$.  There is a functor from the category $\Top$ of topological spaces  to the category  $\Cond$ given by 
\begin{align*}
	(\_): \Top &\rightarrow \Cond\\
	T&\mapsto (\underline{T}: S\mapsto \Cont(S,T))
\end{align*}
where $S$ is any profinite set and $\Cont(S,T)$ is the set of continuous maps from $S$ to $T$. We let  $\Cond(\Ab)$ be the category of condensed abelian groups, which is an abelian category \cite[Theorem~1.10]{CS2019-1} and has an internal Hom-functor denoted by $\ul{\Hom}$.  We also let $\Solid\subseteq \Cond(\Ab)$ be the subcategory of solid abelian groups (see \cite[Definition~5.1]{CS2019-1}), and tensor product of objects in $\Solid$ will be denoted by $\otimes^{\square}$.

Inside the category of profinite sets, we have the subcategory of extremally disconnected sets. These are the projective objects in the category $\ast_{\proet}$, and we denote this subcategory by $\EDis$. Given a condensed ring $R$, we denote by $\Mod^{\cond}_{R}$ the category of condensed $R$-modules. A \textit{pre-analytic ring} $\cA$ consists of a condensed ring $\ul{\cA}$ and a functor 
\begin{align*}
	\EDis&\rightarrow \Mod^{\cond}_{\ul{\cA}}\\
	S&\mapsto \cA[S]
\end{align*}
sending finite disjoint unions to finite products, together with a natural transformation $S\rightarrow \cA[S]$. Now given a pre-analytic ring $\cA$, if for any complex 
 $
C: \cdots \rightarrow C_i\rightarrow C_1\rightarrow C_0\rightarrow 0
$ 
in $\Ch(\Mod^{\cond}_{\ul{\cA}})$ with each  $C_i$ a direct sum of objects of the form $\cA[T]$ for varying $T\in \EDis$, the map 
$
	R\ul{\Hom}_{\ul{\cA}}(\cA[S], C)\rightarrow R\ul{\Hom}_{\ul{\cA}}(\ul{\cA}[S], C)
$ 
is an isomorphism in the derived category $D(\Cond(\Ab))$, then we say $\cA$ is \textit{analytic}. 

\begin{exmp}
	The primal example of a pre-analytic ring is  $\Z_{\square}$, whose underlying condensed ring is given by $\ul{\Z_{\square}}=\Z$ and the associated functor is defined as
	\begin{align*}
		\EDis &\rightarrow \Mod^{\cond}_{\Z}\\
		S=\invlim_{i}S_i&\mapsto \Z_{\square}[S]:=\invlim_{i} \Z[S_i].
	\end{align*}
	In fact, $\Z_{\square}$ is also  analytic and we have $\Solid=\Mod^{\cond}_{\Z_{\square}}$ (see \cite[Theorem~5.8]{CS2019-1}).
\end{exmp}

\begin{exmp}
	For $K/\Qp$ a finite extension, we can give $\OO_K$ an analytic ring structure $\OO_{K,\square}$ by setting  $\ul{\OO_{K,\square}}=\OO_K$ and defining $\OO_{K,\square}[S]=\invlim_{i}\OO_K[S_i]$ for $S=\invlim_{i}S_i\in \EDis$. We  can also give $K$ an anlytic ring structure $K_{\square}$ by setting $\ul{K_{\square}}=K$ and $K_{\square}[S]:=\OO_{K,\square}[S][\frac{1}{p}]$.
\end{exmp}

For the category $\Mod^{\cond}_{\Z_{\square}}$, we may also write it as $\Mod^{\solid}_{\Z}$ to emphasize solidity. When $K$ is a finite extension of $\Qp$, we let $\Mod^{\solid}_{K}$ denote the category of solid $K$-modules. Objects in $\Mod^{\solid}_{K}$ are solid abelian groups which are also $K$-vector spaces. 

We recall the following  classes of $K$-vector spaces from classical functional analysis.

\begin{defns}
	Let $V$ be a  topological $K$-vector space. 
\begin{arenumerate}
	\item We say $V$ is a \textit{$K$-Banach space} if $V$ admits a $\pi$-adically complete $\OO_{K}$-module $V^{0}\subseteq V$ such that $V^{0}\otimes_{\OO_{K}}K = V$ and $V^{0}/\pi^{n}V^{0}$ is discrete for all $n\in \N$. 
	\item We say $V$ is a \textit{$K$-Fr\'echet space} if $V$ is locally convex, complete, and its topology can be given via a countable family of seminorms. 
\end{arenumerate}
\end{defns}

We call an object in $\Mod^{\solid}_{K}$  a  \textit{solid $K$-Banach/Fr\'echet space}  if it arises as $\underline{V}$ for some $K$-Banach/Fr\'echet space $V$. 

Now for the cohomology groups appeared in Section \ref{subsection-3.1}, one way to endow them with a topology is by upgrading the constant sheaf $\Qp$  to a sheaf of condensed abelian groups  $\ul{\Qp}$ (with the $p$-adic topology). For  an adic space  $X$, this  will also upgrade the  pro-\'etale cohomology group $H^{i}_{\proet}(X_C, \Qp)$ into a condensed abelian group $H^{i}_{\cond}(X_{C,\proet}, \ul{\Qp})$. If there exists a topological $\Qp$-vector space $V$ such that 
 \[
	 H^{i}_{\cond}(X_{C,\proet}, \ul{\Qp})\cong \ul{V}
\] 
in $\Cond(\Ab)$, the topology of $V$ then  offers us a way to topologize $H^{i}_{\proet}(X_{C},\Qp)$. 

When Colmez--Dospinescu--Nizio{\l} first studied the $p$-adic (pro-)\'etale cohomology of Drinfeld spaces, the theory of condensed mathematics was not fully available yet. A condensed version of their calculations is contained in  \cite{Bosco2023-1}, which describes the cohomology groups and their topology  in a unified manner. 

\begin{exmp}
	We give an example of how one can endow a pro-\'etale cohomology group a natural topology through the case of  $H^{r}_{\proet}(\cH^{n-1}_{C},\Ql(r))$ in Theorem \ref{thm-SS}. Following the condensed approach in \cite{Bosco2023-1}, once we upgrade the sheaf $\Ql(r)$ into a sheaf of condensed abelian groups $\ul{\Ql(r)}$, the isomorphism in Theorem \ref{thm-SS} becomes 
	\[
		H^{r}_{\cond}(\cH^{n-1}_{C,\proet}, \ul{\Ql(r)})\cong \ul{\Hom}_{\Ql}(\ul{\Sp_r(\Ql)},\ul{\Ql}),
	\] 
	where $\Ql$ is equipped with the $\ell$-adic topology and the topology of $\Sp_r(\Ql)$ is induced by that of $\Ql$. One can then show that 
	\[
		\ul{\Hom}_{\Ql}(\ul{\Sp_r(\Ql)}, \ul{\Ql})\cong \ul{\Hom_{\cts}(\Sp_r(\Ql),\Ql)}
	\] 
	in the category $\Mod^{\solid}_{\Ql}$, where $\Hom_{\cts}(\Sp_r(\Ql),\Ql)$ is equipped with the weak topology. The topology of $H^{r}_{\proet}(\cH^{n-1}_{C},\Ql(r))$ is then given by that of $\Hom_{\cts}(\Sp_r(\Ql),\Ql)$, which is a $\Ql$-Fr\'echet space. 
\end{exmp}

Using the condensed formalism, we now elaborate on the Hochschild-Serre spectral sequence for pro-\'etale cohomology. Let $G$ be a profinite group, $X$ a rigid analytic space over $K$, and $Y\rightarrow X$ a pro-\'etale $G$-torsor. The pro-\'etale cohomology of $X$ can be computed via the simplicial cover 
\[
	\cdots\mathrel{\substack{\textstyle\rightarrow\\[-0.3ex]
                      \textstyle\rightarrow \\[-0.3ex]
                      \textstyle\rightarrow}} Y \times_X Y \rightrightarrows Y
\] 
where the $(n+1)$-fold product $(Y/X)^{n+1}$ is isomorphic to $Y\times G^{n}$. By \cite[Lemma~4.4]{CGN} (see also \cite[Proposition~4.12]{Bosco2023-1}), we have a quasi-isomorphism in $D(\Solid)$
\[
	R\Gamma_{\cond}((Y\times G^{n})_{\proet}, \underline{\Qp})\simeq R\underline{\Hom}(\Z_{\square}[G^{n}], R\Gamma_{\cond}(Y_{\proet}, \underline{\Qp})). 
\] 
This gives us a Hochschild-Serre spectral sequence 
\[
	E^{i,j}_{2}=H^{i}_{\cond}(G, H^{j}_{\cond}(Y_{\proet}, \underline{\Qp}))\Rightarrow H^{i+j}_{\cond}(X_{\proet}, \underline{\Qp})
\] 
or equivalently, if we remember that the cohomology groups have a topology,
\[
	E^{i,j}_{2}=H^{i}_{\cts}(G, H^{j}_{\proet}(Y, \Qp))\Rightarrow H^{i+j}_{\proet}(X, \Qp).
\] 
Such a spectral sequence allows us to compute the cohomology of a rigid analytic variety $X$ over $K$ using the $\cG_K$-torsor $X_{C}\rightarrow X$ and also the cohomology of a quotient stack $[X/G]$ using the $G$-torsor $X\rightarrow [X/G]$.

In the rest of the paper, we will not emphasize the use of condensed formalism to prevent overloading the notation with too many "underlines". Instead, we will simply state what is  the topology of the cohomology groups under consideration, and the reader should keep in mind that the  topology comes from condensing the relevant objects.

\section{Cohomology of the isomorphism stacks}\label{section-4}
In this section, we compute the pro-\'etale cohomology of the isomorphism stack $[\cH^{n-1}_{K}/\GL_n(\OO_K)]$. 
\subsection{\texorpdfstring{$\ell$-adic case}{l-adic case}}\label{subsection-4.1}
First recall the  Galois cohomology of $\cG_K$ over $\Ql$, which can be summarized as follows. 
\begin{lem}\label{lem-l-Gal}
	For $j\in \Z$, let $\Ql(j)$ be the $j$-th Tate twist of $\Ql$. We have  
	\begin{align*}
		&H^{0}_{\cts}(\GG_K, \Ql(j))\cong\begin{cases}
	\Ql  &  \text{if $j=0$}\\
	0  &  \text{otherwise}
	\end{cases} \\
		&H^{1}_{\cts}(\GG_K, \Ql(j))\cong\begin{cases}
		\Ql  &  \text{if $j=0,1$}\\
		0  &  \text{otherwise}
		\end{cases}\\
		&H^{2}_{\cts}(\GG_K, \Ql(j))\cong\begin{cases}
		\Ql  &  \text{if $j=1$}\\
		0  &  \text{otherwise}
		\end{cases}
	\end{align*}
and  higher cohomology groups vanish for all $j$. 
\end{lem}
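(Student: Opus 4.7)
This is a standard local Galois cohomology computation, and my plan is to extract it from the Hochschild--Serre spectral sequence for
\[
1\to I_K\to \cG_K\to \Gal(K^{\ur}/K)\cong \hat{\Z}\to 1,
\]
combined with two elementary facts about cohomological dimensions and the trivial action of inertia on $\Q_\ell$-coefficients. First I would record that since $\ell\neq p$, all $\ell$-power cyclotomic extensions $K(\mu_{\ell^{n}})$ are unramified, so the cyclotomic character $\chi_\ell\co \cG_K\to \Z_\ell^{\times}$ is trivial on $I_K$; hence $I_K$ acts trivially on $\Q_\ell(j)$ for every $j$, and the arithmetic Frobenius $\varphi$ acts on $\Q_\ell(j)$ by $q^{j}$ (with $q=\#k$). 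This immediately yields the $H^{0}$ statement, since $H^{0}_{\cts}(\cG_K,\Q_\ell(j))=\Q_\ell(j)^{\varphi=1}$ is $\Q_\ell$ when $q^{j}=1$ (i.e.\ $j=0$) and vanishes otherwise.

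Next I would invoke that the tame $\ell$-part of $I_K$ is $\Z_\ell(1)$ (wild inertia being pro-$p$ is $\ell$-cohomologically trivial), so $\operatorname{cd}_\ell(I_K)=1$, while $\operatorname{cd}_\ell(\hat{\Z})=1$ for the obvious reason. Consequently the spectral sequence
\[
E_2^{a,b}=H^{a}_{\cts}(\hat{\Z},H^{b}_{\cts}(I_K,\Q_\ell(j)))\Rightarrow H^{a+b}_{\cts}(\cG_K,\Q_\ell(j))
\]
is concentrated in $a,b\in\{0,1\}$ and degenerates at $E_{2}$; in particular $H^{i}_{\cts}(\cG_K,\Q_\ell(j))=0$ for $i\geq 3$, and $H^{2}=E_2^{1,1}$ while $H^{1}$ fits in a short exact sequence $0\to E_2^{1,0}\to H^{1}\to E_2^{0,1}\to 0$.

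The four $E_2$-entries are computed as follows. Using the trivial action of $I_K$, $H^{0}_{\cts}(I_K,\Q_\ell(j))=\Q_\ell(j)$; and $H^{1}_{\cts}(I_K,\Q_\ell(j))=\Hom_{\cts}(\Z_\ell(1),\Q_\ell(j))=\Q_\ell(j-1)$, the identification of tame $\ell$-inertia with $\Z_\ell(1)$ (with the correct Frobenius twist) being the key input. For a continuous $\Q_\ell$-representation $V$ of $\hat{\Z}$, one has $H^{0}_{\cts}(\hat{\Z},V)=V^{\varphi=1}$ and $H^{1}_{\cts}(\hat{\Z},V)=V_{\varphi=1}=\coker(\varphi-1)$. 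Substituting $V=\Q_\ell(j)$ and $V=\Q_\ell(j-1)$ and using that $\varphi$ acts on $\Q_\ell(m)$ by $q^{m}$, each invariant/coinvariant is $\Q_\ell$ when $m=0$ and zero otherwise. Running through the three cases $j=0$, $j=1$, and $j\notin\{0,1\}$ (where one examines whether $q^{j}$ or $q^{j-1}$ equals $1$) then extracts exactly the values stated in the lemma; in particular $H^{2}$ sees $j=1$ only, while $H^{1}$ picks up contributions from both $E_2^{1,0}$ (at $j=0$) and $E_2^{0,1}$ (at $j=1$).

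The only non-routine step is ensuring that the Hochschild--Serre spectral sequence is available in the continuous $\Q_\ell$-setting. This is exactly what the condensed formalism reviewed in Section \ref{subsection-3.2} provides: the spectral sequence exists in $\Mod^{\solid}_{\Q_\ell}$, and on the relevant $E_{2}$-terms it recovers the classical continuous group cohomology, so that all the invariant/coinvariant identifications above can be read off as identifications of solid $\Q_\ell$-vector spaces. I do not anticipate any genuine obstacle here beyond bookkeeping, since both $I_K$ (via its tame quotient) and $\hat{\Z}$ are pro-finite of $\ell$-cohomological dimension one.
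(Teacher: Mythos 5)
Your proof is correct but takes a genuinely different route from the paper. The paper's argument is shorter and leans on two cited structural theorems: it computes $H^{0}$ directly, deduces $H^{2}$ by local Tate duality, recovers $H^{1}$ from the local Euler--Poincar\'e characteristic formula, and invokes $\operatorname{cd}(\cG_{K})=2$ for the vanishing in higher degrees. You instead unwind the Hochschild--Serre spectral sequence for $1\to I_{K}\to\cG_{K}\to\hat{\Z}\to 1$, using only the two elementary inputs that $I_{K}$ acts trivially on $\Q_{\ell}(j)$ for $\ell\neq p$ and that both $I_{K}$ (via its tame $\ell$-quotient $\Z_{\ell}(1)$, wild inertia being pro-$p$) and $\hat{\Z}$ have $\ell$-cohomological dimension one. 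Your computation of the four $E_{2}$-terms is correct, the identification $H^{1}_{\cts}(I_{K},\Q_{\ell}(j))\cong\Q_{\ell}(j-1)$ is the right normalization, and degeneration at $E_{2}$ holds for the reason you give (the only possible $d_{2}$ lands in $E_{2}^{2,0}=0$). What your approach buys: it is more self-contained, it makes the source of each cohomology class transparent (unramified versus tamely ramified contribution), and it adapts cleanly to the solid/condensed setting you already need elsewhere in the paper. What it costs: a bit more bookkeeping compared with the paper's two-line appeal to duality and the Euler characteristic. Both proofs are valid; the difference is purely one of machinery traded against transparency.
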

\begin{proof}
	Since $\GG_K$ acts on $\Ql(j)$ through the $j$-th power $\ell$-adic cyclotomic character, we see that $H^{0}$ is nonzero only when $j=0$. By local Tate duality,  $H^{i}_{\cts}(\GG_K, \Ql(j))$ and $H^{2-i}_{\cts}(\GG_K, \Ql(1-j))$ are dual to each other,which  gives the description of $H^{2}$. Using the local Euler-Poincar\'e characteristic formula \cite[Corollary~7.3.8]{coh_number_fields}, we have
	$\sum^{2}_{i=0}(-1)^{i}\dim_{\Ql}H^{i}_{\cts}(\GG_K, \Ql(j))=0$ 
	for any fixed $j$, and this gives the description of $H^{1}$. Finally, $H^{i}_{\cts}(\GG_K, \Ql(j))$ vanishes for all $i\geq 3$ and all $j$ as the cohomological degree of $\GG_K$ is 2. 
\end{proof}

Now we compute the $\ell$-adic pro-\'etale cohomology of Drinfeld spaces over $K$ through Galois descent. 
\begin{prop}\label{prop-l-descent}
	There are $\GL_n(K)$-equivariant isomorphisms 
\[
	H^{r}_{\proet}(\HH ^{n-1}_K, \Ql)\cong \begin{cases}
		\Ql &  \text{if $r=0,1$}\\
	0  &  \text{otherwise.}
	\end{cases}
\] 
\end{prop}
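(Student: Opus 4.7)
The plan is to apply the Hochschild-Serre spectral sequence to the pro-\'etale $\cG_K$-cover $\cH^{n-1}_{C}\to \cH^{n-1}_{K}$, combining Schneider--Stuhler's computation over $C$ (Theorem \ref{thm-SS}) with the basic Galois cohomology recalled in Lemma \ref{lem-l-Gal}. Working in the condensed framework of Section \ref{subsection-3.2}, we obtain
\[
E^{i,j}_{2}=H^{i}_{\cts}(\cG_K, H^{j}_{\proet}(\cH^{n-1}_{C}, \Ql))\Longrightarrow H^{i+j}_{\proet}(\cH^{n-1}_{K}, \Ql).
\]
By Theorem \ref{thm-SS}, $H^{j}_{\proet}(\cH^{n-1}_{C}, \Ql(j))\cong \Sp_j(\Ql)^{\ast}$ for $0\leq j\leq n-1$, and both sides vanish outside this range (the left-hand side for dimension reasons, as $\cH^{n-1}_{C}$ has dimension $n-1$). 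Untwisting yields $H^{j}_{\proet}(\cH^{n-1}_{C}, \Ql)\cong \Sp_j(\Ql)^{\ast}(-j)$ as $\GL_n(K)\times \cG_K$-modules, where $\cG_K$ acts trivially on the Steinberg factor.

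The key step is then to reduce the $E_2$-page to Galois cohomology with coefficients in $\Ql(-j)$. Since $\Sp_j(\Ql)^{\ast}$ is a Fr\'echet space carrying a trivial $\cG_K$-action, one expects
\[
H^{i}_{\cts}(\cG_K, \Sp_j(\Ql)^{\ast}(-j))\cong \Sp_j(\Ql)^{\ast}\otimes^{\square}H^{i}_{\cts}(\cG_K, \Ql(-j)),
\]
which follows in the solid formalism because the constant Galois representation splits off any completed tensor factor with trivial action. Applying Lemma \ref{lem-l-Gal}, the right-hand side is nonzero only for $j=0$ with $i\in\{0,1\}$ (each contributing $\Ql$), the $H^{2}$-contribution forcing $-j=1$ which is incompatible with $j\geq 0$. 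Since $\Sp_0(\Ql)=\Ql$ is the trivial representation, its dual is again $\Ql$ with trivial $\GL_n(K)$-action.

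Thus $E^{i,j}_{2}$ is supported on the single row $j=0$, and the spectral sequence degenerates immediately, producing $H^{0}_{\proet}(\cH^{n-1}_{K}, \Ql)\cong H^{1}_{\proet}(\cH^{n-1}_{K}, \Ql)\cong \Ql$ with all higher cohomology vanishing, compatibly with the $\GL_n(K)$-action. The main subtlety I expect is the careful handling of topology, namely the splitting of continuous Galois cohomology along the trivial factor $\Sp_j(\Ql)^{\ast}$; this is best addressed through the condensed/solid formalism of Section \ref{subsection-3.2}, where the Fr\'echet structure on the dual Steinberg is transparent and completed tensor products behave exactly as needed.
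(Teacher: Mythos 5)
Your proof is correct and takes essentially the same route as the paper: apply the Hochschild–Serre spectral sequence for the $\cG_K$-cover $\cH^{n-1}_C\to\cH^{n-1}_K$, use Theorem \ref{thm-SS} to identify $H^j_{\proet}(\cH^{n-1}_C,\Ql)\cong\Sp_j(\Ql)^{\ast}(-j)$, factor out the $\Sp_j(\Ql)^{\ast}$-part as a trivial tensor factor, and then invoke Lemma \ref{lem-l-Gal} to see that only $(i,j)=(0,0),(1,0)$ survive on the $E_2$-page. Your extra remark explaining the projection formula via the solid formalism is a welcome elaboration of a step the paper states without comment, but it is the same argument.
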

\begin{proof}
	We use the Hochschild-Serre spectral sequence to carry out Galois descent from the geometric cohomology $H^{r}_{\proet}(\HH^{n-1}_{C},\Ql)$ to the arithmetic cohomology $H^{r}_{\proet}(\HH^{n-1}_{K},\Ql)$, where the  $E_2$-page is given by 
	\[
		E_{2}^{i,j}=H^{i}_{\cts}(\GG_K, H^{j}_{\proet}(\HH^{n-1}_{C},\Ql))\Rightarrow H^{i+j}_{\proet}(\HH^{n-1}_{K},\Ql).
	\] 
	By Theorem \ref{thm-SS}, $H^{j}_{\proet}(\HH^{n-1}_{C},\Ql)\cong \Sp_j(\Ql)^{\ast}(-j)$ where $\GG_K$ acts through the Tate twist. So we have  $E^{i,j}_{2}=H^{i}_{\cts}(\GG_K, \Ql(-j))\otimes \Sp_j(\Ql)^{\ast}$, which is isomorphic to $\Sp_j(\Ql)^{\ast}$ if $j=0$ and $i=0,1$ and is 0 for all other $i,j$ by Lemma \ref{lem-l-Gal}. Thus the spectral sequence  degenerates on the $E_2$-page and the claim follows.
\end{proof}

To compute the $\ell$-adic pro-\'etale cohomology of the stack $[\cH^{n-1}_{K}/\GL_n(\OO_K)]$, we first compute the continuous group cohomology of $\GL_n(\OO_K)$ over $\Ql$. 
\begin{lem}\label{lem-int-l}
	Let $\Ql$ be the trivial 1-dimensional $\GL_n(\OO_K)$-representation equipped with the $\ell$-adic topology. Then 
	\[
		H^{i}_{\cts}(\GL_n(\OO_K), \Ql)=\begin{cases}
		\Ql  &  \text{if $i=0$}\\
		0  &  \text{otherwise.}
		\end{cases}
	\] 
\end{lem}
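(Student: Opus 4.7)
The plan is to reduce to the well-known fact that continuous cohomology of pro-$p$ groups with $\Ql$-coefficients ($\ell \neq p$) vanishes in positive degrees, combined with the vanishing of cohomology of finite groups with $\Q$-vector space coefficients. The main input is that $\GL_n(\OO_K)$ is a compact $p$-adic Lie group and thus contains an open normal pro-$p$ subgroup of finite index.

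Concretely, let $U = \ker(\GL_n(\OO_K) \to \GL_n(k))$ be the first principal congruence subgroup, i.e. the matrices congruent to the identity modulo $\pi$. This is an open normal subgroup of $\GL_n(\OO_K)$ with finite quotient $\GL_n(k)$, and $U$ is pro-$p$ since it is the inverse limit of the finite $p$-groups $U/U_m$ where $U_m = 1 + \pi^{m}\Mat_n(\OO_K)$ (the successive quotients $U_m/U_{m+1}$ are elementary abelian $p$-groups).

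I would then run the Hochschild-Serre spectral sequence for the extension $1 \to U \to \GL_n(\OO_K) \to \GL_n(k)\to 1$:
\[
	E_2^{i,j} = H^{i}_{\cts}(\GL_n(k), H^{j}_{\cts}(U, \Ql)) \Rightarrow H^{i+j}_{\cts}(\GL_n(\OO_K), \Ql).
\]
For the rows $j>0$: since $U$ is pro-$p$, for any finite discrete $U$-module $M$ of order coprime to $p$ one has $H^{j}_{\cts}(U, M) = 0$ in positive degrees (standard result, e.g.\ \cite[Ch.~I, \S3.3]{coh_number_fields}-type reference). Passing to the limit gives $H^{j}_{\cts}(U, \Zl) = 0$ for $j>0$, and inverting $\ell$ yields $H^{j}_{\cts}(U, \Ql) = 0$ for $j>0$; clearly $H^{0}_{\cts}(U, \Ql) = \Ql$. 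For the bottom row: $\GL_n(k)$ is a finite group, and continuous (= ordinary) cohomology of a finite group with coefficients in a $\Q$-vector space vanishes in positive degrees via the standard averaging/transfer argument, so $E_2^{i,0} = 0$ for $i>0$ and $E_2^{0,0} = \Ql$.

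Thus the spectral sequence degenerates with a single nonzero term $E_2^{0,0} = \Ql$, giving the claimed result. No step is really an obstacle here; the only thing to be mildly careful about is the passage from cohomology with finite coefficients to $\Ql$-coefficients for the pro-$p$ group $U$, which goes through $H^{j}_{\cts}(U, \Zl) = \invlim_{n} H^{j}_{\cts}(U, \Z/\ell^{n})$ and then inverts $\ell$.
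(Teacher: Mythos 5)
Your proof is correct, and the overall idea is the same as the paper's (reduce to a pro-$p$ subgroup of finite index and exploit $\ell \neq p$), but you implement the ``finite index'' step differently. The paper takes an arbitrary pro-$p$ subgroup $P$ of finite index $m$, shows $H^{i}_{\cts}(P,\Ql)=0$ for $i>0$ using the same Mittag--Leffler argument as you plus the short exact sequence $0\to \Zl\to\Ql\to\Ql/\Zl\to 0$, and then concludes via the transfer identity $\cores\circ\res=[m]$, which is an isomorphism on $\Ql$-vector spaces. You instead pick the specific normal congruence subgroup $U=\ker(\GL_n(\OO_K)\to\GL_n(k))$ and run Hochschild--Serre for $1\to U\to\GL_n(\OO_K)\to\GL_n(k)\to 1$, handling the finite quotient by the averaging argument. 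The two mechanisms are essentially equivalent (the averaging argument on a finite group \emph{is} the transfer argument), but the paper's version is marginally more flexible since it doesn't require normality of the pro-$p$ subgroup, while yours has the advantage of being concrete about which subgroup is used and of making the profinite/finite split explicit via the spectral sequence. One small remark on a step you flagged yourself: ``inverting $\ell$'' to pass from $H^{j}_{\cts}(U,\Zl)=0$ to $H^{j}_{\cts}(U,\Ql)=0$ does work here, because $U$ is compact, so any continuous cochain $U^{j}\to\Ql$ has image in some $\ell^{-N}\Zl$, giving $C^{j}_{\cts}(U,\Ql)=C^{j}_{\cts}(U,\Zl)\otimes_{\Zl}\Ql$; the paper sidesteps this point by also computing $H^{i}_{\cts}(P,\Ql/\Zl)=0$ and invoking the long exact sequence. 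Either justification is fine, but one of the two should be said explicitly.
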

\begin{proof}
	Let  $P\subseteq \GL_n(\OO_K)$ be a pro-$p$ subgroup of finite index $m$. Since $\ell\neq p$, the $\ell$-cohomological dimension of $P$ is 0 \cite[Corollary~3.3.7]{coh_number_fields}. Thus we have  $H^{i}_{\cts}(P, \Z/\ell^{r}\Z)=0$ for $i,r>0$. This implies the system $\{H^{i}_{\cts}(P, \Z/\ell^{r}\Z)\}_{r}$ is Mittag-Leffler for all $i\geq 0$. Thus we get $R^{1}\invlim_{r}H^{i-1}_{\cts}(P, \Z/\ell^{r}\Z)=0$ and $H^{i}_{\cts}(P, \Zl)\cong \invlim_{r}H^{i}_{\cts}(P, \Z/\ell^{r}\Z)=0$ for $i>0$. Using the $\ell$-cohomological dimension of $P$ again, we also have $H^{i}_{\cts}(P, \Ql/\Zl)=0$ for $i>0$. Together, such vanishing results imply $H^{i}_{\cts}(P, \Ql)=0$ for $i>0$. 

	Now let $\cores^{\GL_n(\OO_K)}_{P}$ and $\res^{\GL_n(\OO_K)}_{P}$ denote the corestriction and restriction maps on group cohomology and also let $[m]$ denote the multiplication-by-$m$ map. Using the vanishing of $H^{i}_{\cts}(P, \Ql)$, we get 
	\[
		[m]|_{H^{i}_{\cts}(\GL_n(\OO_K), \Ql)}=\cores^{\GL_n(\OO_K)}_{P}\circ \res^{\GL_n(\OO_K)}_{P}=0\quad \text{for $i>0$.}
	\] 
	As $m$ is invertible in $\Ql$, we get $H^{i}_{\cts}(\GL_n(\OO_K), \Ql)=0$ for $i>0$.
\end{proof}

\begin{thm}\label{thm-l-iso}
There are isomorphisms
\[
	H^{r}_{\proet}([\HH^{n-1}_K/\GL_n(\OO_K)],\Ql)\cong \begin{cases}
	\Ql  &  \text{if $r=0,1$}\\
	0  &  \text{otherwise.}
	\end{cases}
\] 
\end{thm}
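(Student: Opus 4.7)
The plan is to apply the Hochschild--Serre spectral sequence from Section \ref{subsection-3.2} to the pro-\'etale $\GL_n(\OO_K)$-torsor $\cH^{n-1}_K \to [\cH^{n-1}_K/\GL_n(\OO_K)]$. This gives
\[
E_2^{i,j} = H^i_{\cts}\bigl(\GL_n(\OO_K),\, H^j_{\proet}(\cH^{n-1}_K, \Ql)\bigr) \Rightarrow H^{i+j}_{\proet}\bigl([\cH^{n-1}_K/\GL_n(\OO_K)],\, \Ql\bigr).
\]
The strategy is then to combine the two input computations already in place: Proposition \ref{prop-l-descent} describes the coefficient systems, and Lemma \ref{lem-int-l} collapses the group cohomology in the vertical direction.

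First, by Proposition \ref{prop-l-descent}, the only nonzero rows of the $E_2$-page are $j=0$ and $j=1$, with coefficients $H^0_{\proet}(\cH^{n-1}_K, \Ql) \cong H^1_{\proet}(\cH^{n-1}_K, \Ql) \cong \Ql$. I need to check that in both cases $\GL_n(\OO_K)$ acts trivially on this copy of $\Ql$. For $j=0$ this is obvious. For $j=1$, tracing through the proof of Proposition \ref{prop-l-descent} one sees that the class comes from $H^1_{\cts}(\cG_K, \Ql) \otimes \Sp_0(\Ql)^{\ast}$, i.e., an \emph{arithmetic} class coupled with the trivial $\GL_n(K)$-representation $\Sp_0(\Ql)^{\ast} = \Ql$, so the $\GL_n(K)$-action (hence the $\GL_n(\OO_K)$-action) is trivial. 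Thus each nonzero $E_2^{i,j}$ equals $H^i_{\cts}(\GL_n(\OO_K), \Ql)$.

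Now I invoke Lemma \ref{lem-int-l}, which says $H^i_{\cts}(\GL_n(\OO_K), \Ql)$ is $\Ql$ in degree $0$ and vanishes for $i>0$. Consequently the $E_2$-page has precisely two nonzero entries, namely $E_2^{0,0} \cong \Ql$ and $E_2^{0,1} \cong \Ql$. The spectral sequence degenerates trivially at $E_2$ (the entire page is concentrated in a single column, so all differentials vanish), and reading off the abutment yields the claimed values for $H^{r}_{\proet}([\cH^{n-1}_K/\GL_n(\OO_K)], \Ql)$.

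The only nontrivial point in this outline is the verification that $\GL_n(\OO_K)$ acts trivially on $H^1_{\proet}(\cH^{n-1}_K, \Ql)$, since everything else is a direct combination of Proposition \ref{prop-l-descent} and Lemma \ref{lem-int-l}. I do not anticipate serious obstacles beyond being careful about this action and about invoking the condensed form of Hochschild--Serre as stated in Section \ref{subsection-3.2}.
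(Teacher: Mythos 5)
Your argument is correct and follows exactly the same route as the paper's proof: the Hochschild--Serre spectral sequence for the $\GL_n(\OO_K)$-torsor $\cH^{n-1}_K \to [\cH^{n-1}_K/\GL_n(\OO_K)]$, with inputs from Proposition \ref{prop-l-descent} and Lemma \ref{lem-int-l}. The only additional content in your write-up is the explicit check that $\GL_n(\OO_K)$ acts trivially on $H^1_{\proet}(\cH^{n-1}_K,\Ql)$, which is already built into the statement of Proposition \ref{prop-l-descent} (it asserts $\GL_n(K)$-equivariant isomorphisms onto $\Ql$ with trivial action, since it arises from $\Sp_0(\Ql)^{\ast}$).
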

\begin{proof}
We compute the pro-\'etale cohomology of the stack using the spectral sequence 
\[
	E^{i,j}_{2}=H^{i}_{\cts}(\GL_n(\OO_K), H^{j}_{\proet}(\HH^{n-1}_{K}, \Ql))\Rightarrow H^{i+j}_{\proet}([\HH^{n-1}_{K}/\GL_n(\OO_K)], \Ql).
\] 
	By Proposition \ref{prop-l-descent} and Lemma \ref{lem-int-l}, we get $E^{i,j}_{2}\cong \Ql$ when $j=0,1$ and $i=0$ and vanishes everywhere else. This finishes the computation.  
\end{proof}

\subsection{\texorpdfstring{$p$-adic case}{p-adic case}}\label{subsection-4.2}
As one might have anticipated, the steps used in the $\ell$-adic case do not  translate to the $p$-adic case. Specifically, as we will see later in Proposition \ref{prop-p-descent}, while the differential part appeared  in Theorem \ref{thm-CDN} does not survive the Galois descent from $C$ to $K$, the Galois cohomology of $\cG_K$ over $\Qp(j)$ is  more intricate than over $\Ql(j)$, making duals of nontrivial Steinberg representations appear in cohomology of Drinfeld spaces after descent. The challenge then comes from computing the group cohomology of $\GL_n(\OO_K)$ over representations $\Sp_r(\Qp)^{\ast}$.

To proceed with our computation of  $p$-adic pro-\'etale cohomology of $[\cH^{n-1}_{K}/\GL_n(\OO_K)]$, we will pass to the Lubin-Tate side and compute the $p$-adic pro-\'etale cohomology of the stack $[\LT_{n,\breve{K}}/\G_n]$ instead. As implied by Corollary \ref{cor-iso}, cohomology groups of these two quotient stacks are identical.

We again start by recalling  Galois cohomology of $\cG_K$ over Tate twists of $\Qp$.
\begin{lem}\label{lem-p-Gal}
Let $d$ be the degree of $K$ over $\Qp$. Then for $j\in \Z$, we have  
	\begin{align*}
		&H^{0}_{\cts}(\GG_K, \Qp(j))\cong\begin{cases}
	\Qp  &  \text{if $j=0$}\\
	0  &  \text{otherwise}
	\end{cases}\\
	&H^{1}_{\cts}(\GG_K, \Qp(j))\cong\begin{cases}
	\Qp^{d+1}  &  \text{if $j=0,1$}\\
	\Qp^{d}  &  \text{otherwise}
	\end{cases} \\
	&H^{2}_{\cts}(\GG_K, \Qp(j))\cong\begin{cases}
	\Qp  &  \text{if $j=1$}\\
	0  &  \text{otherwise}
	\end{cases}
	\end{align*}
	and  higher cohomology groups vanish for all $j$. 
\end{lem}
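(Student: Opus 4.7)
The plan is to follow exactly the same three-step strategy used in Lemma \ref{lem-l-Gal}. The key numerical difference from the $\ell$-adic setting is that the local Euler--Poincar\'e characteristic of a $\Qp$-representation of $\GG_K$ is $-[K:\Qp]=-d$ rather than $0$; this single fact accounts for the extra $\Qp^{d}$ appearing in every $H^{1}$.

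First, for $H^{0}$, I would observe that $\GG_K$ acts on $\Qp(j)$ via the $j$-th power of the $p$-adic cyclotomic character. For $j\neq 0$, the image of this character in $\Zp^{\times}$ is an open (hence infinite) subgroup, so there are no fixed vectors; for $j=0$, the action is trivial. This gives the $H^{0}$ row. Next, local Tate duality pairs $H^{i}_{\cts}(\GG_K, \Qp(j))$ with $H^{2-i}_{\cts}(\GG_K, \Qp(1-j))$, so the $H^{2}$ row is read off directly from the $H^{0}$ row.

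For $H^{1}$, I would apply the local Euler--Poincar\'e characteristic formula
\[
\sum_{i=0}^{2}(-1)^{i}\dim_{\Qp} H^{i}_{\cts}(\GG_K, \Qp(j)) = -d,
\]
which together with the computed values of $H^{0}$ and $H^{2}$ immediately yields $\dim_{\Qp} H^{1}=d+1$ when $j\in\{0,1\}$ and $\dim_{\Qp} H^{1}=d$ otherwise. The vanishing of $H^{i}$ for $i\geq 3$ then follows from the fact that the $p$-cohomological dimension of $\GG_K$ is $2$ (see \cite[Theorem~7.1.8]{coh_number_fields}).

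The only (very minor) obstacle is that local Tate duality and the Euler--Poincar\'e formula are most naturally stated for finite $p$-primary coefficients rather than for $\Qp$-coefficients. To bridge this gap I would pass through $\Zp(j)$ via an inverse limit argument, noting that the systems $\{H^{i}_{\cts}(\GG_K,\Z/p^{n}(j))\}_{n}$ have finite terms and are therefore automatically Mittag-Leffler, so that $R^{1}\varprojlim$ vanishes and the integral statements descend from their mod-$p^{n}$ counterparts; inverting $p$ then gives the stated $\Qp$-dimensions.
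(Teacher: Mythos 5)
Your proof is correct and follows essentially the same three-step route as the paper: direct computation for $H^{0}$, local Tate duality for $H^{2}$, and the local Euler--Poincar\'e characteristic formula for $H^{1}$. The only addition is your explicit inverse-limit argument bridging finite $p$-primary coefficients to $\Qp$-coefficients, a technical detail the paper's proof silently suppresses but which is handled correctly (finite terms give Mittag--Leffler automatically).
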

\begin{proof}
	Same as in Lemma \ref{lem-l-Gal}, $H^{0}$ can be computed directly and local Tate duality gives the description of $H^{2}$. For all fixed $j$, the  local Euler-Poincar\'e  characteristic formula \cite[Corollary~7.3.8]{coh_number_fields} now implies $\sum^{2}_{i=0}(-1)^{i}\dim_{\Qp} H^{i}_{\cts}(\GG_K, \Qp(j))=-[K:\Qp]$, which gives the description of $H^{1}$. 
\end{proof}

As for Galois cohomology over Tate twists of $C$, we have the following theorem of Tate. 
\begin{thm}[\cite{Tate1967-1}]\label{thm-Tate}
	Let $K$ be a finite extension of $\Qp$ and let $C(j)$ be the 1-dimensional $\GG_K$-representation over $C$ where $\GG_K$ acts through the $j$-th power cyclotomic character. Then 
	\[
		H^{i}_{\cts}(\GG_K, C(j))\cong \begin{cases}
		K  &  \text{if $j=0$ and $i=0,1$}\\
		0  &  \text{otherwise.}
		\end{cases}
	\] 
\end{thm}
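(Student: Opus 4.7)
The approach is Tate's original argument via the cyclotomic tower. Write $K_\infty = K(\mu_{p^\infty})$, let $\wh{K_\infty}$ be its $p$-adic completion, set $\Gamma = \Gal(K_\infty/K)$ and $H_K = \Gal(\ol{K}/K_\infty)$, and let $\chi \co \Gamma \to \Zp^\times$ denote the cyclotomic character, which identifies a suitable open subgroup $\Gamma'$ of $\Gamma$ with $\Zp$. The first move is to apply the Hochschild--Serre spectral sequence
\[
E_2^{p,q} = H^p_{\cts}(\Gamma, H^q_{\cts}(H_K, C(j))) \Longrightarrow H^{p+q}_{\cts}(\cG_K, C(j)),
\]
thereby reducing the problem to (a) computing the $H_K$-cohomology of $C(j)$ and (b) computing the $\Gamma$-cohomology of what remains.

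For (a), the classical Ax--Sen--Tate theorem gives $C^{H_K} = \wh{K_\infty}$, and Tate's almost \'etale refinement shows that $H^q_{\cts}(H_K, C)$ vanishes in positive degrees. Since $H_K$ acts trivially on the Tate-twist factor of $C(j)$, the same holds with $C(j)$ in place of $C$. The spectral sequence then collapses onto the edge, yielding $H^i_{\cts}(\cG_K, C(j)) \cong H^i_{\cts}(\Gamma, \wh{K_\infty}(j))$. This is the technical heart of the proof, and I would cite it rather than reprove it; it is classically established by Tate's explicit analysis of trace-compatible almost normal bases for the tower $K_\infty/K_n$, or in modern language via almost \'etale descent along the perfectoid extension $\wh{K_\infty}/K$.

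For (b), the main input is Tate's normalized trace $R \co \wh{K_\infty} \to K$, a continuous $\Gamma$-equivariant projection whose kernel $X$ is $\Gamma$-stable and has the property that, for an appropriate topological generator $\gamma_0$ of $\Gamma' \cong \Zp$, the operator $\gamma_0 - \chi(\gamma_0)^j$ is a topological automorphism of $X$ for every $j \in \Z$. Hence $H^i_{\cts}(\Gamma', X(j)) = 0$ for all $i$ and $j$, and the Lyndon spectral sequence for the finite, prime-to-$p$ quotient $\Gamma/\Gamma'$ transports this vanishing back to $\Gamma$. It therefore suffices to compute $H^i_{\cts}(\Gamma, K(j))$. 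If $j \neq 0$, the scalar $\chi(\gamma_0)^j - 1$ is a nonzero element of $\Zp \subset K$ and hence a unit in $K$, so $\gamma_0 - \chi(\gamma_0)^j$ is invertible on $K$ and all cohomology vanishes; combined with (a) this gives $H^*_{\cts}(\cG_K, C(j)) = 0$. If $j = 0$, the action is trivial, $\Gamma'$ has cohomological dimension one, and one reads off $H^0(\Gamma, K) = K$ and $H^1_{\cts}(\Gamma, K) = K$ (the latter spanned by the cocycle $\log \chi$), with higher cohomology vanishing. This gives the stated values.

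The principal obstacle is step (a): Tate's computation of $H^*_{\cts}(H_K, C)$ is the nontrivial technical input, while everything afterwards is essentially the cohomology of $\Zp$ combined with the invertibility properties of the normalized trace.
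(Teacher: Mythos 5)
The paper does not give a proof of this statement: it is cited directly from Tate's 1967 paper on $p$-divisible groups and used as a black box. Your sketch faithfully reconstructs Tate's original argument --- Hochschild--Serre along the cyclotomic tower $K_\infty/K$, the almost-\'etale vanishing $H^q_{\cts}(H_K, C) = 0$ for $q>0$ with $C^{H_K} = \wh{K_\infty}$ (Ax--Sen--Tate plus Tate's refinement), and the normalized-trace splitting $\wh{K_\infty} = K \oplus X$ with $\gamma_0 - \chi(\gamma_0)^j$ a topological automorphism of $X$, reducing everything to the elementary cohomology of $\Gamma' \cong \Zp$ acting on $K(j)$ --- and the argument is correct as written.
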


On the Lubin-Tate side, since we are quotienting  $\LT_{n,\breve{K}}$ by the Morava stabilizer group $\G_n\cong \OO_D^{\times}\rtimes\hat{\Z}$, we evidently need to compute the continuous group cohomology of $\OO^{\times}_{D}$ over $\Qp$. Let $D_0$ be the division algebra of invariant $1/n$ over $\Qp$. 
The following result follows  from Lazard's comparison theorem \cite[Th\'eor\`eme~V.2.4.10]{Lazard1965-1} between continuous cohomology and Lie algebra cohomology for $\Qp$-analytic groups.

\begin{lem}[{\cite[Proposition~3.8.1]{BSSW-1}}]\label{lem-Lazard}
	Let $G$ be either $\GL_n(\Zp)$ or $\OO_{D_0}^{\times}$ with the trivial action on $\Qp$. Then
	\[
		H^{\ast}_{\cts}(G, \Qp)\cong  H^{\ast}_{\Lie}(\gl_n \Qp, \Qp)\cong \Lambda _{\Qp}(x_1,x_3,\cdots,x_{2n-1})
	\] 
	as graded $\Qp$-algebras where $\left|x_i\right|=i$ in the exterior algebra on the right. 
\end{lem}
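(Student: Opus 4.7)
The plan is to run both cases through Lazard's comparison theorem \cite[Th\'eor\`eme~V.2.4.10]{Lazard1965-1}, reducing each continuous cohomology group to Lie algebra cohomology, and then to invoke a classical computation for reductive Lie algebras. For any compact $p$-adic analytic group $G$, Lazard's theorem with trivial $\Qp$-coefficients gives an isomorphism
\[
H^{*}_{\cts}(G,\Qp)\;\cong\; H^{*}_{\Lie}(\Lie(G)_{\Qp},\Qp).
\]
Since neither $\GL_n(\Zp)$ nor $\OO_{D_0}^\times$ is itself uniform, this is set up by passing to an open normal uniform pro-$p$ subgroup $U\trianglelefteq G$ of finite index $m$ (say a sufficiently deep congruence subgroup). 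The Hochschild--Serre spectral sequence for $U\trianglelefteq G$ collapses because $m\in \Qp^\times$, yielding $H^{*}_{\cts}(G,\Qp)\cong H^{*}_{\cts}(U,\Qp)^{G/U}$; the $G/U$-action on the right is trivial because it is induced by the adjoint action of $G$ on $\Lie(U)_{\Qp}$, and inner automorphisms act trivially on Lie algebra cohomology.

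The respective Lie algebras are $\gl_n(\Qp)$ for $\GL_n(\Zp)$ and $D_0$ (with commutator bracket) for $\OO_{D_0}^\times$. Both are reductive of dimension $n^{2}$, and $D_0$ is an inner form of $\gl_n$ with $D_0\otimes_{\Qp}\Qpbar\cong\gl_n(\Qpbar)$, since $D_0$ splits over a maximal commutative subfield sitting inside $\Qpbar$. Because the Chevalley--Eilenberg complex commutes with flat base change,
\[
H^{*}_{\Lie}(\g,\Qp)\otimes_{\Qp}\Qpbar\;\cong\; H^{*}_{\Lie}(\gl_n(\Qpbar),\Qpbar)
\]
for either choice of $\g$, so it suffices to establish the exterior algebra description for the split form over $\Qpbar$ and then descend by comparing $\Qp$-dimensions in each degree.

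The final ingredient is the classical Hopf--Koszul--Samelson theorem: for a reductive Lie algebra over a characteristic-zero field, $H^{*}_{\Lie}(\g)$ is the exterior algebra on primitive generators of degrees $2m_{i}+1$, where the $m_{i}$ are the exponents of $\g$. For $\gl_n$ the exponents are $0,1,\ldots,n-1$, so the generators sit in degrees $1,3,\ldots,2n-1$; this also matches the classical description of the de Rham cohomology of the compact real form $U(n)$. The step I expect to be most delicate is the second descent: ensuring that the exterior generators $x_{i}$ can be chosen to descend from $\Qpbar$ to $\Qp$ as honest graded-algebra generators. This amounts to showing the primitive classes representing the $x_{i}$ can be taken $\Gal(\Qpbar/\Qp)$-invariant, which follows because they are characterized up to scalar by the primitive-element decomposition of the Chevalley--Eilenberg cohomology and that decomposition is already defined over $\Qp$. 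Combining the three steps yields the asserted isomorphism of graded $\Qp$-algebras.
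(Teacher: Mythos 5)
The paper does not supply a proof of this lemma; it is cited from \cite[Proposition~3.8.1]{BSSW-1}, with a pointer to Lazard's comparison theorem. There is therefore no internal paper proof to compare against, and your write-up is a genuine contribution of detail.

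Your argument is correct and is essentially the standard one: reduce to a uniform open normal pro-$p$ subgroup $U$, apply Lazard there, collapse Hochschild--Serre for $U\trianglelefteq G$ using that $[G:U]$ is invertible in $\Qp$, observe that the resulting $G/U$-action on $H^{\ast}_{\Lie}(\fg,\Qp)$ is trivial, identify the Lie algebras $\gl_n(\Qp)$ and $D_0$, base change to $\Qpbar$ where both become $\gl_n(\Qpbar)$, invoke Hopf--Koszul--Samelson, and descend. Note that this is also in line with the paper's own Proposition~\ref{prop-generalization}, which generalizes this lemma to $K$-analytic groups by exactly the same pattern — base change to $\Qpbar$, compatibility of the coproduct (hence of the primitive subspace) with base change, and vanishing of $H^{1}(\cG_{\Qp},\GL_d(\Qpbar))$; your descent step is the $d=1$ specialization, where Hilbert~90 for $\GL_1$ makes the choice of generator canonical up to scalar.

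Two small precision points. First, your opening sentence asserts the Lazard isomorphism for an arbitrary compact $p$-adic analytic group; strictly speaking this holds only after taking $G$-invariants on the Lie side, which your subsequent uniform-subgroup reduction does supply — but the blanket statement as worded is not quite the theorem. Second, the assertion that the $G/U$-action is trivial "because inner automorphisms act trivially" glosses over a small check for $\OO_{D_0}^\times$: one needs that $\Ad(\OO_{D_0}^\times)$ lands in the connected adjoint group $D_0^\times/\Qp^\times$ (an inner form of $\PGL_n$), whose action on $H^{\ast}_{\Lie}(D_0,\Qp)$ is trivial since the induced action of $\Lie(D_0^\times/\Qp^\times)$ vanishes by the Cartan homotopy formula. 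Spelling this out would close the gap between "inner in the associative-algebra sense" and "inner in the Lie-theoretic sense."
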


In Lemma \ref{lem-normal-closure} and Proposition \ref{prop-generalization} below, we generalize Lazard's result from $\Qp$-analytic groups to $K$-analytic groups for finite extensions $K/\Qp$. Our strategy is motivated by the proof of \cite[Lemma~4.1.1]{HKN}. 
\begin{lem}\label{lem-normal-closure}
	Let $K$ be a finite extension of $\Qp$ of degree $d$ and let $\GL_n$ be the general linear algebraic group defined over $K$, then
	\[
		(\Res_{K/\Qp}\GL_n)_{\ol{K}}\cong (\GL_{n,\ol{K}})^{d}
	\] 
	as group schemes over $\ol{K}$.
\end{lem}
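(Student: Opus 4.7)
The plan is to verify the isomorphism on the functor of points and then invoke Yoneda. Recall that, by definition of Weil restriction, for any $\Qp$-algebra $R$ one has
\[
	(\Res_{K/\Qp}\GL_n)(R) = \GL_n(R\otimes_{\Qp}K).
\]
Base-changing to $\ol{K}$, for any $\ol{K}$-algebra $R$ this becomes
\[
	(\Res_{K/\Qp}\GL_n)_{\ol{K}}(R) = \GL_n\bigl(R\otimes_{\ol{K}}(\ol{K}\otimes_{\Qp}K)\bigr).
\]

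The key input is the standard $\ol{K}$-algebra isomorphism
\[
	\ol{K}\otimes_{\Qp}K \;\cong\; \prod_{\sigma\co K\hookrightarrow \ol{K}}\ol{K},
\]
where the product is indexed by the $d$ embeddings $\sigma$ of $K$ into $\ol{K}$ fixing $\Qp$, and the map sends $a\otimes b$ to $(a\cdot\sigma(b))_\sigma$. This follows from the primitive element theorem together with the Chinese Remainder Theorem: writing $K=\Qp(\alpha)$ with minimal polynomial $f\in\Qp[x]$, one has $\ol{K}\otimes_{\Qp}K\cong \ol{K}[x]/(f)$, and since $K/\Qp$ is separable, $f$ splits into distinct linear factors over $\ol{K}$, yielding the product decomposition.

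Substituting this back, we obtain
\[
	(\Res_{K/\Qp}\GL_n)_{\ol{K}}(R) \;=\; \GL_n(R^d) \;=\; \GL_n(R)^{d} \;=\; (\GL_{n,\ol{K}})^d(R),
\]
functorially in $R$. Yoneda's lemma then gives the claimed isomorphism of group schemes over $\ol{K}$. There is no serious obstacle here; the only point requiring a moment's care is to observe that separability of $K/\Qp$ (automatic in characteristic $0$) is what guarantees that $\ol{K}\otimes_{\Qp}K$ is \'etale over $\ol{K}$ and hence splits as a product of copies of $\ol{K}$ rather than picking up nilpotents.
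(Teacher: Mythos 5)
Your proof is correct and relies on the same key algebraic input as the paper, namely the isomorphism of $\ol{K}$-algebras $\ol{K}\otimes_{\Qp}K\cong\prod_{\sigma}\ol{K}$ indexed by the $d$ embeddings $\sigma\co K\hookrightarrow\ol{K}$, and both arguments work on the functor of points. The execution differs, though: the paper constructs, for each embedding $\sigma_i$, a map $f_i$ of functors together with a retraction $s_i$ with $s_i\circ f_i=\id$, concludes that $\GL_{n,\ol{K}}$ is a direct summand of $(\Res_{K/\Qp}\GL_n)_{\ol{K}}$ for each $i$, and then asserts the product decomposition; you instead push the ring isomorphism directly through the defining formula $(\Res_{K/\Qp}\GL_n)_{\ol{K}}(R)=\GL_n(R\otimes_{\Qp}K)$ and finish in one line via $\GL_n(R^d)\cong\GL_n(R)^d$. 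Your route is cleaner: it bypasses the slightly elided final step of the paper's argument, where upgrading ``each $\GL_{n,\ol{K}}$ is a direct summand'' to ``the whole thing is the $d$-fold product'' requires verifying the $d$ inclusions are compatible and jointly exhaustive, whereas your direct computation gives the isomorphism in one stroke. The added value in the paper's phrasing is that it makes explicit how each embedding picks out a factor, which is the structure used in the descent step of Proposition \ref{prop-generalization}, but that information is equally visible in your computation.
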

\begin{proof}
	Since $[K:\Qp]=d$, there are exactly $d$ different embeddings of $K$ into $\ol{K}$ over  $\Qp$. Denote by $\Sigma =\{\sigma_i \}$ the set of embeddings of $K$ into $\ol{K}$. Then we have $\ol{K}\otimes_{\Qp} K\cong \prod^{}_{\sigma _i\in \Sigma } \ol{K}$. 

	 Now we proceed by comparing the functor of points associated with the two group schemes in question. Let $X$ be a scheme over $\ol{K}$, then for each embedding $\sigma _i: K\hookrightarrow \ol{K}$, we have a corresponding map $f_i: \ol{K}\rightarrow \ol{K}\otimes_{\Qp}K\cong \prod^{}_{\sigma _i\in \Sigma }\ol{K}$ onto the copy of $\ol{K}$ associated with $\sigma _i$. This induces a map 
	 \[
		f_i: \Hom_{\ol{K}}(X, \GL_{n,\ol{K}})\rightarrow \Hom_{\ol{K}} (X\times_{\ol{K}}\Spec (\ol{K}\otimes_{\Qp}K), \GL_{n,\ol{K}})
	 \] 
	 where 
	 \begin{align*}
		 \Hom_{\ol{K}}(X\times_{\ol{K}}\Spec (\ol{K}\otimes_{\Qp}K), \GL_{n,\ol{K}})&=\Hom_{K}(X\times \Spec K, \GL_{n})\\
		 &= \Hom_{\Qp}(X, \Res_{K/\Qp}\GL_{n})\\
		 &= \Hom_{\ol{K}}(X, (\Res_{K/\Qp} \GL_n)_{\ol{K}}).
	 \end{align*}
	 At the same time, the projection $s_i: \prod^{}_{\sigma _i\in \Sigma }\ol{K}\rightarrow \ol{K}$ mapping the copy of $\ol{K}$ corresponding to $\sigma _i$ identically to $\ol{K}$ gives a section of $f_i$. Thus we also get a map 
	  \[
		  s_i: \Hom_{\ol{K}}(X, (\Res_{K/\Qp}\GL_n)_{\ol{K}})\rightarrow \Hom_{\ol{K}}(X, \GL_{n,\ol{K}})
	 \] 
	 such that $s_i\circ f_i=\id$. This implies that $\GL_{n,\ol{K}}$ is a direct summand of $(\Res_{K/\Qp}\GL_n)_{\ol{K}}$. As this is true for all $\sigma _i$, we conclude that $(\Res_{K/\Qp}\GL_n)_{\ol{K}}\cong (\GL_{n,\ol{K}})^{ d}$. 
\end{proof}

\begin{prop}\label{prop-generalization}
	Suppose  $K/\Qp$ is  a finite extension of degree $d$ and $D$ is a division algebra over $K$ of invariant $1/n$. Let $G$ be either $\GL_n(\OO_K)$ or $\OO_D^{\times}$ with a trivial action on $\Qp$. Then we have
	\[
		H^{\ast}_{\cts}(G,\Qp)\cong \Lambda _{\Qp}(x_1,\cdots,x_{2n-1})^{\otimes d}
	\] 
	as graded $\Qp$-algebras where $\left|x_i\right|=i$ in the exterior algebra on the right.
\end{prop}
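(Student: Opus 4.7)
The plan is to reduce, via Lazard's comparison theorem, to a computation of $\Qp$-Lie algebra cohomology, and then to spread out to $\Qpbar$ where both Lie algebras become isomorphic to $\gl_n(\Qpbar)^d$. Observe that $G$ is a compact $\Qp$-analytic group (any $K$-analytic group is $\Qp$-analytic via restriction of scalars), with $\Qp$-Lie algebra
\[
\fg := \Lie(G) = \begin{cases} \gl_n(K) & \text{if } G = \GL_n(\OO_K),\\ D & \text{if } G = \OO_D^\times,\end{cases}
\]
of $\Qp$-dimension $dn^2$. Passing to a torsion-free open pro-$p$ subgroup $H \leq G$, Lazard's theorem (as in Lemma \ref{lem-Lazard}) gives $H^*_{\cts}(H, \Qp) \cong H^*_{\Lie}(\fg, \Qp)$ as graded $\Qp$-algebras. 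Since $\Qp[G/H]$ is semisimple by Maschke, the Hochschild--Serre spectral sequence $H^i(G/H, H^j_{\cts}(H, \Qp)) \Rightarrow H^{i+j}_{\cts}(G, \Qp)$ degenerates onto the bottom row and identifies $H^*_{\cts}(G, \Qp)$ with the $G/H$-invariants in $H^*_{\Lie}(\fg, \Qp)$; the induced action is trivial since it factors through the adjoint representation and inner automorphisms act trivially on Lie algebra cohomology. The problem thus reduces to computing $H^*_{\Lie}(\fg, \Qp)$.

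Next I base-change to $\Qpbar$. Lemma \ref{lem-normal-closure} yields $\gl_n(K) \otimes_{\Qp} \Qpbar \cong \gl_n(\Qpbar)^{d}$ as $\Qpbar$-Lie algebras; on the division algebra side, since $D$ splits over any algebraic closure of $K$ and $K \otimes_{\Qp} \Qpbar \cong \Qpbar^{d}$, we have
\[
D \otimes_{\Qp} \Qpbar \cong D \otimes_{K}(K \otimes_{\Qp} \Qpbar) \cong (D \otimes_K \Qpbar)^{d} \cong \gl_n(\Qpbar)^{d},
\]
so after base change the two cases coincide. Compatibility of the Chevalley--Eilenberg complex with flat base change, combined with Künneth and the standard calculation of $H^*_{\Lie}(\gl_n, \_)$ over any characteristic-zero field (the second isomorphism in Lemma \ref{lem-Lazard}), gives
\[
H^*_{\Lie}(\fg, \Qp) \otimes_{\Qp} \Qpbar \cong H^*_{\Lie}(\gl_n(\Qpbar)^{d}, \Qpbar) \cong \Lambda_{\Qpbar}(x_1, x_3, \ldots, x_{2n-1})^{\otimes d}.
\]
This pins down the graded $\Qp$-dimensions of $H^*_{\cts}(G, \Qp)$.

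Finally, to upgrade to an isomorphism of graded $\Qp$-algebras I invoke the Hopf--Koszul--Samelson theorem: $\fg$ is reductive over $\Qp$, being the Lie algebra of the reductive $\Qp$-group $\Res_{K/\Qp}\GL_n$ (respectively of the inner form of $\GL_n$ attached to $D$), so $H^*_{\Lie}(\fg, \Qp)$ is a free exterior algebra on its primitive elements in odd degrees. The base-change calculation above then identifies this exterior algebra with $\Lambda_{\Qp}(x_1, x_3, \ldots, x_{2n-1})^{\otimes d}$, since the numbers and degrees of primitive generators are preserved under extension of scalars. The main obstacle is precisely this last step---descending the algebra (as opposed to merely graded vector space) structure from $\Qpbar$ to $\Qp$---which is handled by the reductivity of $\fg$ together with Hopf--Koszul--Samelson.
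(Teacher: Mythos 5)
Your proof is correct, and it reaches the same intermediate computation as the paper (base-change to $\Qpbar$ via Lemma \ref{lem-normal-closure}, reducing to $H^{\ast}_{\Lie}((\gl_n\Qpbar)^d,\Qpbar)$), but the mechanism for descending the multiplicative structure from $\Qpbar$ to $\Qp$ is genuinely different. The paper argues that the base-change isomorphism preserves primitives (via compatibility of the coproduct with scalar extension), observes that the subgroup of automorphisms of $\Lambda_{\Qpbar}(x_1,\dots,x_{2n-1})^{\otimes d}$ preserving primitives is $\prod_i\GL_d(\Qpbar)$, and then kills the Galois twist using non-abelian Hilbert~90, $H^1(\cG_{\Qp},\GL_d(\Qpbar))=0$. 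You instead invoke Hopf--Koszul--Samelson: $\fg$ is reductive over $\Qp$, so $H^{\ast}_{\Lie}(\fg,\Qp)$ is a priori a free exterior algebra on odd-degree primitives, and the graded dimensions (determined by base change, since the Poincar\'e polynomial of a free exterior algebra determines the degrees of its generators) pin it down. Your route is arguably cleaner in that it avoids Galois cohomology altogether, at the cost of invoking a somewhat heavier structural theorem; both are sound. You also spell out the passage from $H^{\ast}_{\cts}(G,\Qp)$ to $H^{\ast}_{\Lie}(\fg,\Qp)$ (torsion-free pro-$p$ subgroup, Hochschild--Serre, triviality of the conjugation action because it factors through inner automorphisms of the reductive group), which the paper leaves implicit by analogy with its Lemma \ref{lem-Lazard}; this is a welcome explication and the argument you give is correct, though you should note that ``inner automorphisms act trivially'' here is most cleanly justified by observing that $\Ad$ factors through the connected algebraic group $\Res_{K/\Qp}\GL_n$ (resp.\ $\Res_{K/\Qp}D^{\times}$), so the induced action on the finite-dimensional algebraic representation $H^{\ast}_{\Lie}(\fg,\Qp)$ factors through $\pi_0=1$.
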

\begin{proof}
	We first consider the case of  $G=\GL_n(\OO_K)$, which can be viewed as a $\Qp$-analytic group via restriction of scalars. The Lie algebra of $G$ is then given by $\Res_{K/\Qp}\gl_nK$, which we denote by $\g$. By Lemma \ref{lem-normal-closure}, we have
	$
		\g\otimes_{\Qp} \Qpbar\cong (\gl_n \Qpbar)^{d}
	$ 
	as Lie algebras over $\Qpbar$. This gives
	\begin{equation}\label{equation-closure}
		H^{\ast}_{\Lie} (\g\otimes \Qpbar, \Qpbar)\cong H^{\ast}_{\Lie}((\gl_n \Qpbar)^{d}, \Qpbar)\cong \Lambda _{\Qpbar}(x_1,x_3,\cdots,x_{2n-1})^{\otimes d}
	\end{equation}
	where the second isomorphism follows from Lemma \ref{lem-Lazard} and the product formula for Lie algebra cohomology. In the meantime, base-change gives us an isomorphism 
	\begin{equation}\label{eq-base_change}
		H^{\ast}_{\Lie}(\g, \Qp)\otimes \Qpbar\cong H^{\ast}_{\Lie}(\g\otimes \Qpbar, \Qpbar).
	\end{equation}
	Thus we have $H^{\ast}_{\Lie}(\g, \Qp)\otimes \Qpbar\cong \Lambda_{\Qpbar}(x_1,x_3,\cdots, x_{2n-1})^{\otimes d}$ and it suffices to show the latter description descends to $\Qp$. 

	We claim that the base-change isomorphism (\ref{eq-base_change}) preserves primitive elements. Recall from \cite[\S~10]{Kos50} that given a Lie algebra $\fa$ over a field $F$, its Lie algebra cohomology carries a coproduct $\Delta : H^{\ast}_{\Lie}(\fa, F)\rightarrow H^{\ast}_{\Lie}(\fa, F)\otimes H^{\ast}_{\Lie}(\fa, F)$ and the primitive elements in $H^{\ast}_{\Lie}(\fa, F)$ are those such that $\Delta(x)=x\otimes 1+1\otimes x$. The map $\Delta $ comes  from the diagonal map $D: \fa\rightarrow \fa\times \fa$ of Lie algebras. As the diagonal map is compatible with base-change, we see the coproduct $\Delta$ commutes with base-change, hence the claim.

	In the graded algebra $\Lambda_{\Qpbar}(x_1,x_3,\cdots,x_{2n-1})^{\otimes d}$, the  group of automorphisms which preserve primitive elements is given by $\prod^{}_{i\in \{1,3,\cdots, 2n-1\}}\GL_d(\Qpbar)$.
	Using the claim above, to obtain  the descent from $\Qpbar$ to $\Qp$,  it is enough to show $H^{1}(\cG_{\Qp}, \GL_d(\Qpbar))$ is trivial. As $\cG_{\Qp}$ is profinite, any continuous 1-cocycle $\gamma: \cG_{\Qp}\rightarrow \GL_d(\Qpbar)$ has finite image. Thus the image of $\gamma $ is contained in $\GL_d(L)$ for some finite Galois extension $L/\Qp$, and $\gamma$ factors through the finite Galois group $\Gal(L/\Qp)$. Then using non-abelian Hilbert 90 (\cite[\S III.1~Lemma 1]{Serre-Galois}), we get 
	\begin{equation}\label{equation-90}
		H^{1}(\cG_{\Qp}, \GL_d(\Qpbar))\cong \dirlim_{L/\Qp \text{ finite Galois}}H^{1}(\Gal(L/\Qp), \GL_d(L))=0.
	\end{equation}
	This implies $H^{\ast}_{\Lie}(\fg, \Qp)\cong \Lambda_{\Qp}(x_1,x_3,\cdots, x_{2n-1})^{\otimes d}$, which proves the statement for $\GL_n(\OO_K)$.

	When $G=\OO_D^{\times}$, the Lie algebra of $G$ and the Lie algebra of  $\GL_n(\OO_K)$ are isomorphic  over $\Qpbar$. Thus by (\ref{equation-closure}) and (\ref{equation-90}),  the continuous group cohomology of $\OO_D^{\times}$ over $\Qp$ is the same as that of $\GL_n(\OO_K)$.  
\end{proof}

We will need the following projection formula in our later computation. 
\begin{lem}\label{lem-proj}
	Let $G$ be a profinite group and $\cM,\cN\in \Mod^{\solid}_{K[G]}$. If the action of $G$ on $\cN$ is trivial and $\cN$ is a solid  $K$-Fr\'echet space, there is an isomorphism 
	\[
		H^{\ast}(G, \cM\otimes^{\square}_K \cN)\cong H^{\ast}(G,\cM)\otimes^{\square}_{K}\cN.
	\] 
Specifically, if $M,N$ are  $K$-Fr\'echet  spaces and the action of $G$ on $N$ is trivial, then we have an isomorphism
\[
	H^{\ast}_{\cts}(G, M\wh{\otimes}_K N)\cong H^{\ast}_{\cts}(G, M)\wh{\otimes}_K N.
\] 
\end{lem}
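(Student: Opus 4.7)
The plan is to compute the continuous group cohomology via an explicit resolution, reduce the projection formula to a level-wise identity on the cochain complex, and then invoke flatness of solid Fr\'echet spaces to pass to cohomology. Recall that for a profinite group $G$, the functor $R\Gamma(G,-)$ on $\Mod^{\solid}_{K[G]}$ is computed by $R\ul\Hom_{K_{\square}[G]}(K,-)$, with $K$ carrying the trivial action. Resolve $K$ by the standard bar complex $P_\bullet \to K$ with $P_n := K_{\square}[G^{n+1}]$, each a free $K_{\square}[G]$-module via the leftmost factor; a contracting homotopy shows it is exact in $\Mod^{\solid}_{K[G]}$. Free-forgetful adjunction identifies $\ul\Hom_{K_{\square}[G]}(P_\bullet,\cM)$ with the cochain complex $C^\bullet(\cM)$ defined by $C^n(\cM) := \ul\Hom_K(K_{\square}[G^n], \cM)$, whose cohomology computes $H^{\ast}(G,\cM)$.

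The first main step is the level-wise projection formula
\[
\ul\Hom_K(K_{\square}[G^n],\, \cM \otimes^{\square}_K \cN) \;\cong\; \ul\Hom_K(K_{\square}[G^n], \cM) \otimes^{\square}_K \cN.
\]
The point is that for any profinite set $S = \invlim_i S_i$, the solid module $K_{\square}[S] = \invlim_i K[S_i]$ is ``compact'' in the solid sense, while a solid $K$-Fr\'echet space $\cN$ is a countable cofiltered limit of $K$-Banach spaces along nuclear transition maps. The commutation of $\ul\Hom_K(K_{\square}[S],-)$ with $-\otimes^{\square}_K \cN$ under these hypotheses is a standard property of the solid formalism developed in \cite{CS2019-1} and exploited systematically in \cite{RJRC2022-1} and \cite{RJRC2025-1}.

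Given the term-wise identification $C^\bullet(\cM \otimes^{\square}_K \cN) \cong C^\bullet(\cM) \otimes^{\square}_K \cN$, I would finish by using that solid $K$-Fr\'echet spaces are flat for $\otimes^{\square}_K$ (being cofiltered limits of $K$-Banach spaces along nuclear maps, which are flat over the field $K$ in the solid category). Exactness of $-\otimes^{\square}_K \cN$ then allows it to commute with taking cohomology:
\[
H^{\ast}(G,\cM \otimes^{\square}_K \cN) \;\cong\; H^{\ast}\bigl(C^\bullet(\cM) \otimes^{\square}_K \cN\bigr) \;\cong\; H^{\ast}(C^\bullet(\cM)) \otimes^{\square}_K \cN \;\cong\; H^{\ast}(G,\cM) \otimes^{\square}_K \cN.
\]
This proves the solid statement. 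The concrete Fr\'echet version is deduced by transporting through the fully faithful embedding of $K$-Fr\'echet spaces into $\Mod^{\solid}_K$ recalled in Section \ref{subsection-3.2}: under this embedding, $\wh\otimes_K$ corresponds to $\otimes^{\square}_K$ and continuous group cohomology corresponds to its solid counterpart.

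The main obstacle is the level-wise projection formula, i.e.\ verifying the interaction between the compactness of $K_{\square}[S]$ for profinite $S$ and the nuclear/Fr\'echet structure of $\cN$. Once that input is in place, flatness of Fr\'echet spaces over $K$ in the solid category makes the cohomological step routine.
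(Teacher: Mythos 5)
Your proposal is correct and follows essentially the same route as the paper: establish a projection formula for $R\Gamma(G,-)$ against $-\otimes^{\square}_K\cN$, then use flatness of the solid Fr\'echet space $\cN$ (the paper cites \cite[Corollary~A.65]{Bosco2023-1}; your heuristic that it is a cofiltered limit of flat objects is not itself a proof of flatness) to pass from the derived to the underived statement, and finally transport to the concrete Fr\'echet version via the embedding into $\Mod^{\solid}_K$. The main difference is one of explicitness: you unwind the derived projection formula at the level of the bar cochain complex, whereas the paper asserts it directly as an isomorphism in $D(\Mod^{\solid}_K)$ and then runs the flatness argument through a spectral sequence.
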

\begin{proof}
	Consider $\cM,\cN$ as objects in $D(\Mod^{\solid}_{K[G]})$ and let $R\Gamma(G,-)$ be the derived functor of $(-)^{G}: \Mod^{\solid}_{K[G]}\rightarrow \Mod^{\solid}_{K}$. Since the action of $G$ on $\cN$ is trivial, we have an isomorphism
	\begin{equation}\label{equation-tensor}
		R\Gamma(G, \cM\otimes^{L\square}_{K}\cN)\cong R\Gamma(G,\cM)\otimes^{L\square}_{K}\cN
	\end{equation}
	in the category $D(\Mod^{\solid}_{K})$ where $\otimes^{L\square}_{K}$ is the derived tensor product in $D(\Mod^{\solid}_{K})$.
	Because $\cN$ is a $K$-Fr\'echet space, it is flat for the tensor product  $\otimes^{\square}_{K}$ by \cite[Corollary~A.65]{Bosco2023-1}. Therefore, we have $\cM\otimes^{L\square}_{K}\cN\cong \cM\otimes^{\square}_{K}\cN$ in $D(\Mod^{\solid}_{K[G]})$ and the cohomology of the left-hand side in (\ref{equation-tensor}) calculates $H^{\ast}(G, \cM\otimes^{\square}_{K}\cN)$. The cohomology of the right-hand side in (\ref{equation-tensor}) admits a spectral sequence
	\[
		E^{p,q}_{2}=H^{p}(H^{q}(R\Gamma(G,\cM))\otimes^{L\square}_{K}\cN)\Rightarrow H^{p+q}(R\Gamma(G,\cM)\otimes^{L\square}_{K}\cN).
	\] 
	Again as $\cN$ is flat, we have $H^{q}(R\Gamma(G,\cM))\otimes^{L\square}_{K}\cN\cong H^{q}(R\Gamma(G,\cM)) \otimes^{\square}_{K}\cN$. Thus the cohomology of the right-hand side in (\ref{equation-tensor}) calculates $H^{\ast}(G,\cM)\otimes^{\square}_{K}\cN$ and the first claimed isomorphism of cohomology rings follows.

	The second assertion follows directly from the first one together with the isomorphism between $\underline{V\wh{\otimes}_K W}$ and $\underline{V}\otimes^{\square}_{K}\underline{W}$ in the category $\Mod^{\solid}_{K}$ for $K$-Fr\'echet spaces $V,W$, cf. \cite[Proposition~A.68]{Bosco2023-1}.
\end{proof}

While our computation will be through the Lubin-Tate side, we nevertheless give a description of  the $p$-adic pro-\'etale cohomology of Drinfeld spaces over $K$ as an interlude. 
\begin{prop}\label{prop-p-descent}
	Suppose $K$ is a finite extension of $\Qp$ of degree $d$. There are $\GL_n(K)$-equivariant isomorphisms 
	\[
		H^{r}_{\proet}(\cH^{n-1}_{K},\Qp)\cong \begin{cases}
		\Qp  &  \text{if $r=0$}\\
		\Qp^{d+1}  &  \text{if $r=1$}\\
			\Sp_{r-1}(\Qp)^{\ast}\otimes \Qp^{d} &\text{if $r\geq 2$.}
		\end{cases}
	\] 
\end{prop}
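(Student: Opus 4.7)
The plan is to apply the Hochschild–Serre spectral sequence
\[
E^{i,j}_2 = H^{i}_{\cts}(\cG_K, H^{j}_{\proet}(\cH^{n-1}_{C},\Qp)) \Rightarrow H^{i+j}_{\proet}(\cH^{n-1}_{K},\Qp)
\]
and compute the $E_2$ page using Theorem \ref{thm-CDN}. The starting point is the short exact sequence (after twisting Theorem \ref{thm-CDN} by $-j$)
\[
0 \rightarrow \bigl(\Omega^{j-1}(\cH^{n-1}_{C})/\ker d\bigr)(-j) \rightarrow H^{j}_{\proet}(\cH^{n-1}_{C},\Qp) \rightarrow \Sp_{j}(\Qp)^{\ast}(-j) \rightarrow 0,
\]
which splits the $E_2$ computation into a differential part and a Steinberg part.

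First I would handle the differential part. The space $\Omega^{j-1}(\cH^{n-1}_{K})/\ker d$ is defined over $K$, and its base change to $C$ gives a solid $C$-Fréchet space on which $\cG_K$ acts only through the second factor. Combining Tate's theorem (Theorem \ref{thm-Tate}) with the projection formula (Lemma \ref{lem-proj}), the Galois cohomology of $(\Omega^{j-1}(\cH^{n-1}_{C})/\ker d)(-j)$ is concentrated in Tate twist $0$, i.e.\ $j=0$. Since $\Omega^{-1}=0$, the differential part contributes nothing to $E_2$.

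Next I would handle the Steinberg part. Because $\Sp_j(\Qp)^{\ast}$ carries the trivial $\cG_K$-action and is a $\Qp$-Fréchet space, Lemma \ref{lem-proj} gives
\[
E^{i,j}_{2} \cong \Sp_j(\Qp)^{\ast} \otimes H^{i}_{\cts}(\cG_K, \Qp(-j)).
\]
Plugging in Lemma \ref{lem-p-Gal} yields $E^{0,0}_2=\Qp$, $E^{1,0}_2=\Qp^{d+1}$, $E^{1,j}_2=\Sp_j(\Qp)^{\ast}\otimes \Qp^{d}$ for $j\geq 1$, and all other entries vanish (in particular $E^{i,j}_2=0$ for $i\geq 2$ and for $i=0,\,j\geq 1$). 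Since only the columns $i=0,1$ are nonzero, the spectral sequence degenerates at $E_2$ with no extension problems, and reading off $H^{r}=E^{0,r}_2\oplus E^{1,r-1}_2$ gives exactly the claimed answer. Galois-equivariance is built into the construction, so the isomorphisms are automatically $\GL_n(K)$-equivariant since the Hochschild–Serre spectral sequence is $\GL_n(K)$-equivariant throughout.

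The only subtle step is verifying that the short exact sequence of Theorem \ref{thm-CDN} descends sheaf-theoretically in a way compatible with the condensed Galois cohomology (so that the projection formula and Tate's theorem can be applied to the $C$-Fréchet differential term); this is implicit in the condensed reformulation of \cite{CDN2020-1} given in \cite{Bosco2023-1} and is the main place one must be careful. Once this is in hand, everything else is a direct computation with the $E_2$ page.
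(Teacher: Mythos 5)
Your proposal is correct and follows essentially the same route as the paper: apply Hochschild–Serre for the $\cG_K$-torsor $\cH^{n-1}_C\to\cH^{n-1}_K$, kill the differential part of the CDN short exact sequence via the projection formula (Lemma \ref{lem-proj}) plus Tate's vanishing for $C(-j)$ with $j\neq 0$, and evaluate the Steinberg part via Lemma \ref{lem-p-Gal}. The only presentational difference is that the paper runs the long exact sequence of Galois cohomology on the CDN short exact sequence explicitly before assembling the $E_2$-page, whereas you phrase this as the short exact sequence "splitting" the $E_2$-computation — both lead to the same conclusion since the differential contribution vanishes identically for $j\geq 1$.
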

\begin{proof}
	By Theorem \ref{thm-CDN}, we have a short exact sequence 
	\begin{equation}\label{equation-SES1}
		0\rightarrow \Omega ^{j-1}(\cH^{n-1}_{K})/\ker d \, \wh{\otimes}_K C(-j)\rightarrow H^{j}_{\proet}(\cH^{n-1}_{C},\Qp)\rightarrow \Sp_j(\Qp)^{\ast}(-j)\rightarrow 0
	\end{equation}
	for $j\geq 0$. Notice that when  $j=0$, the differential part vanishes and $H^{0}_{\proet}(\cH^{n-1}_{C},\Qp)$ is simply $\Qp$. Using Lemma \ref{lem-p-Gal}, we have
	\[
		H^{i}_{\cts}(\cG_K, H^{0}_{\proet}(\cH^{n-1}_{C},\Qp))\cong \begin{cases}
		\Qp  &  \text{if $i=0$}\\
		\Qp^{d+1}  &  \text{if $i=1$}\\
			0 & \text{if $i\geq 2$.}
		\end{cases}
	\]	
	When $j\geq 1$, the Galois cohomology of (\ref{equation-SES1}) gives a long exact sequence
	\begin{multline*}
		\cdots \rightarrow H^{i-1}_{\cts}(\cG_K, \Sp_j(\Qp)^{\ast}(-j))\rightarrow H^{i}_{\cts}(\cG_K, \Omega ^{j-1}(\cH^{n-1}_{K})/\ker d \, \wh{\otimes}_K C(-j))\\
		\rightarrow H^{i}_{\cts}(\cG_K, H^{j}_{\proet}(\cH^{n-1}_{K},\Qp))\rightarrow H^{i}_{\cts}(\cG_K, \Sp_j(\Qp)^{\ast}(-j))\rightarrow \cdots.
	\end{multline*} 
	Since the action of $\cG_K$ on $\Omega ^{j-1}(\cH^{n-1}_{K})/\ker d\, \wh{\otimes}_K C(-j)$ is through the Tate twist, we have 
	\[
		H^{i}_{\cts}(\cG_K, \Omega ^{j-1}(\cH^{n-1}_{K})/\ker d \, \wh{\otimes}_K C(-j))\cong H^{i}_{\cts}(\cG_K, C(-j))\wh{\otimes}_K \Omega ^{j-1}(\cH^{n-1}_{K})/\ker d
	\] 
	by Lemma \ref{lem-proj}, and Theorem \ref{thm-Tate} further implies that all such cohomology groups vanish. Since the Galois action on $\Sp_j(\Qp)^{\ast}(-j)$ is also through the Tate twist, using Lemma \ref{lem-p-Gal},  we get 
	\[
		H^{i}_{\cts}(\cG_K, H^{j}_{\proet}(\cH^{n-1}_{C},\Qp))\cong \begin{cases}
			\Sp_j(\Qp)^{\ast}\otimes \Qp^{d}  &  \text{if $i=1$}\\
		0  &  \text{otherwise.}
		\end{cases}
	\] 
	By applying these computations of Galois cohomology groups to the spectral sequence for Galois descent, we obtain the claimed isomorphisms. 
\end{proof}

Now we come back to the Lubin-Tate side and finish our computation of $H^{\ast}_{\proet}([\LT_{n,\breve{K}}/\G_n],\Qp)$.
\begin{thm}\label{thm-p-iso}
Let $K$ be a finite extension of $\Qp$ of degree $d$. There is an isomorphism of graded $\Qp$-algebras
\[
	H^{\ast}_{\proet}([\LT_{n,\breve{K}}/\G_n], \Qp)\cong \Lambda _{\Qp}(x_1,x_3,\cdots,x_{2n-1})^{\otimes d}\otimes H^{\ast}_{\cts}(\GG_K, \Qp)
\] 
where the degree of $x_i$ is $i$ in the exterior algebra on the right-hand side. 
\end{thm}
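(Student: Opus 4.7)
The plan is to set up a double spectral sequence reducing the cohomology of $[\LT_{n,\breve{K}}/\G_n]$ to the cohomology of the total space and the group cohomology of $\G_n$, and to exploit Tate's theorem to kill the differential-form contributions that would otherwise appear.

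Concretely, the pro-\'etale $\G_n$-torsor $\LT_{n,\breve{K}} \to [\LT_{n,\breve{K}}/\G_n]$ yields a Hochschild-Serre spectral sequence
\[
E_2^{i,j} = H^i_{\cts}(\G_n, H^j_{\proet}(\LT_{n,\breve{K}}, \Qp)) \Rightarrow H^{i+j}_{\proet}([\LT_{n,\breve{K}}/\G_n], \Qp),
\]
and the cohomology of $\LT_{n,\breve{K}}$ is itself computed by Galois descent along the $\cG_{\breve{K}}$-torsor $\LT_{n,C} \to \LT_{n,\breve{K}}$. By Theorem~\ref{thm-ball}, $H^0_{\proet}(\LT_{n,C}, \Qp) \cong \Qp$, while for $b \geq 1$ we have $H^b_{\proet}(\LT_{n,C}, \Qp) \cong \Omega^{b-1}(\LT_{n,C})/\ker d \otimes C(-b)$. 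Writing $\Omega^{b-1}(\LT_{n,C})/\ker d \cong \Omega^{b-1}(\LT_{n,\breve{K}})/\ker d \cotimes_{\breve{K}} C$ as $\cG_{\breve{K}}$-modules, the projection formula of Lemma~\ref{lem-proj} together with the vanishing $H^*_{\cts}(\cG_{\breve{K}}, C(-b)) = 0$ for $b \neq 0$ (Tate's Theorem~\ref{thm-Tate} applied over $\breve{K}$) force the $b \geq 1$ rows to contribute nothing. Hence $H^j_{\proet}(\LT_{n,\breve{K}}, \Qp) \cong H^j_{\cts}(\cG_{\breve{K}}, \Qp)$, with the induced $\G_n$-action factoring through the conjugation action of $\hat{\Z} = \G_n / \OO_D^\times$ on $\cG_{\breve{K}}$ coming from the extension $1 \to \cG_{\breve{K}} \to \cG_K \to \hat{\Z} \to 1$.

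It then suffices to compute $H^i_{\cts}(\G_n, H^j_{\cts}(\cG_{\breve{K}}, \Qp))$. Filtering via $1 \to \OO_D^\times \to \G_n \to \hat{\Z} \to 1$ and using that $\OO_D^\times$ acts trivially on $H^j_{\cts}(\cG_{\breve{K}}, \Qp)$ (the $\OO_D^\times$-action is $\breve{K}$-linear and does not touch Galois-level data), Lemma~\ref{lem-proj} combined with Proposition~\ref{prop-generalization} identifies the inner cohomology with $\Lambda_{\Qp}(x_1, x_3, \ldots, x_{2n-1})^{\otimes d} \otimes H^j_{\cts}(\cG_{\breve{K}}, \Qp)$. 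Since the $\hat{\Z}$-conjugation action on $\OO_D^\times$ preserves the Lie-algebra-level primitive generators used in the proof of Proposition~\ref{prop-generalization}, the induced $\hat{\Z}$-action on the exterior-algebra factor is trivial. Applying $H^*_{\cts}(\hat{\Z}, -)$ and invoking the inflation-restriction spectral sequence for $1 \to \cG_{\breve{K}} \to \cG_K \to \hat{\Z} \to 1$ then yields
\[
H^*_{\proet}([\LT_{n,\breve{K}}/\G_n], \Qp) \cong \Lambda_{\Qp}(x_1, x_3, \ldots, x_{2n-1})^{\otimes d} \otimes H^*_{\cts}(\cG_K, \Qp),
\]
which is the desired identification.

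The main technical obstacles are (i) justifying the projection formula with tensor products over $\breve{K}$ (rather than over a finite extension of $\Qp$ as stated in Lemma~\ref{lem-proj}), which requires a mild extension to solid $\breve{K}$-Fr\'echet spaces; (ii) extending Tate's theorem from finite extensions of $\Qp$ to $\breve{K}$; and (iii) verifying the triviality of the $\hat{\Z}$-action on $H^*_{\cts}(\OO_D^\times, \Qp)$, which should follow from a Lie-theoretic argument showing that the action on primitive generators is unchanged under Frobenius conjugation. Once these points are settled, all spectral-sequence degenerations are formal.
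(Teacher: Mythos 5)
Your proposal is correct in outline but follows a genuinely different filtration than the paper, and the difference is not merely cosmetic.

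The paper does not descend to $\breve{K}$ as an intermediate step. Instead it forms the fibre product $H = \cG_K \times_{\hat{\Z}} \G_n$ (equalizing the two surjections onto $\Gal(\breve{K}/K) \cong \hat{\Z}$), which fits into the short exact sequence $1 \to \OO_D^\times \to H \to \cG_K \to 1$, and observes that $[\LT_{n,\breve{K}}/\G_n] \cong [\ocirc{\B}^{n-1}_C/H]$. It then runs a single Hochschild--Serre spectral sequence for $H$ acting on $\ocirc{\B}^{n-1}_C$, and within each coefficient row an inner spectral sequence for $\OO_D^\times \trianglelefteq H$ with quotient $\cG_K$. The decisive advantage is that the Galois-theoretic input needed to kill the differential-form rows is Tate's theorem for $\cG_K$ with $K/\Qp$ finite --- exactly the form stated in Theorem~\ref{thm-Tate} --- and the projection formula Lemma~\ref{lem-proj} is applied over $K$, where Fr\'echet flatness is available from \cite{Bosco2023-1}.

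Your route filters the other way: first $\cG_{\breve{K}}$, then $\OO_D^\times$, then $\hat{\Z}$. This forces you through several extensions that the paper's choice of $H$ avoids, and the obstacles you list as "mild" are not so mild on closer inspection. Tate's vanishing over $\breve{K}$ does hold (it is in fact the core of the proof over $K$), but the projection formula over $\breve{K}$-Fr\'echet spaces is not covered by Lemma~\ref{lem-proj} as stated. More seriously, the intermediate object you produce, $H^1_{\proet}(\LT_{n,\breve{K}},\Qp) \cong H^1_{\cts}(\cG_{\breve{K}},\Qp) \cong \Hom_{\cts}(\cG_{\breve{K}},\Qp)$, is an infinite-dimensional $\Qp$-vector space (by local class field theory for $\breve{K}$, the pro-$p$ abelianization of $\cG_{\breve{K}}$ has uncountable $\Zp$-rank), and it is not clear this is a solid Fr\'echet space to which the projection formula applies, nor that all your spectral sequences handle such coefficients cleanly. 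The invariants and coinvariants under $\hat{\Z}$ collapse to the right finite-dimensional answers via inflation-restriction, as you say, but the intermediate stage is delicate. The paper's choice of $H$ keeps every Galois cohomology computation over the finite extension $K$, so all coefficient modules remain tame. Finally, your point (iii) --- triviality of the $\hat{\Z}$-action on $H^*_{\cts}(\OO_D^\times,\Qp)$ --- is the content of \cite[Proposition~3.8.1]{BSSW-1}, which the paper cites explicitly in Lemma~\ref{lem-fib1}; a heuristic about Frobenius preserving primitive generators is not a substitute.

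So: your approach should ultimately work, but it trades the paper's clean "merged group" trick for several nontrivial generalizations and an infinite-dimensional intermediate coefficient module. If you want to keep your filtration, you would need to extend Lemma~\ref{lem-proj} and the relevant flatness statements to solid $\breve{K}$-modules and verify that $H^1_{\cts}(\cG_{\breve{K}},\Qp)$ is of the right type; alternatively, adopting the group $H$ sidesteps all of this at once.
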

\begin{proof}
	The generic fiber $\LT_{n,\breve{K}}$ of the Lubin-Tate space is isomorphic to the rigid analytic $(n-1)$-dimensional open ball $\ocirc{\B}^{n-1}_{\breve{K}}$. However, doing Galois descent on the pro-\'etale cohomology of $\ocirc{\B}^{n-1}_{C}$ directly from $C$ to $\breve{K}$ can be difficult. Instead, we will stitch together the quotient by $\Gal(\ol{K}/\breve{K})$ and the quotient by the Morava stabilizer group $\G_n$. Over $\ocirc{\B}^{n-1}_{C}$, we have actions of both $\cG_K$ and $\G_n$. Since both groups surject onto $\Gal(\breve{K}/K)\cong \hat{\Z}$, we construct a group $H$ as their equalizer so $H$ fits into the diagram 
	\[
	\begin{tikzcd}
		H\arrow[r]\arrow[d] & \cG_K\arrow[d, "f_2"]\\
		\G_n\arrow[r, "f_1"] & \Gal(\breve{K}/K)
	\end{tikzcd}
	\] 
	where $\ker(f_1)=\OO_D^{\times}$, $\ker(f_2)=\Gal(\ol{K}/\breve{K})$, and we have a short exact sequence 
	\begin{equation}\label{equation-H}
	0\rightarrow \OO_D^{\times}\rightarrow H\rightarrow \cG_K\rightarrow 0.
	\end{equation}
	The construction of $H$ allows us to study the actions of $\cG_K$ and $\G_n$ together, and it identifies the pro-\'etale cohomology of $[\LT_{n,\breve{K}}/\G_n]$ with that of $[\ocirc{\B}^{n-1}_{C}/H]$. 

	We now compute the continuous group cohomology of $H$ over $H^{j}_{\proet}(\ocirc{\B}^{n-1}_{C},\Qp)$ for $j\geq 0$ using (\ref{equation-H}) and the associated Hochschild-Serre spectral sequence (see \cite[Section~3.3]{BSSW-1} for a justification of the Hochschild-Serre spectral sequence in the setting of continuous cohomology).  When $j=0$, we have $H^{0}_{\proet}(\ocirc{\B}^{n-1}_{C},\Qp)\cong \Qp$ by Theorem \ref{thm-ball}. Then Lemma \ref{lem-p-Gal} and Proposition \ref{prop-generalization} imply that only the 0th column and the 1st column in the spectral sequence $E^{p,q}_{2}=H^{p}_{\cts}(\cG_K, H^{q}_{\cts}(\OO^{\times}_{D},\Qp))\Rightarrow H^{p+q}_{\cts}(H, \Qp)$ are nonzero. Therefore, the spectral sequence degenerates and  gives
	\[
		H^{\ast}_{\cts}(H, H^{0}_{\proet}(\ocirc{\B}^{n-1}_{C}, \Qp))\cong \Lambda _{\Qp}(x_1,x_3,\cdots,x_{2n-1})^{\otimes d} \otimes H^{\ast}_{\cts}(\cG_K,\Qp). 
	\] 
	When $j\geq 1$, Theorem \ref{thm-ball} implies that $H^{j}_{\proet}(\ocirc{\B}^{n-1}_{C},\Qp)\cong \Omega^{j-1}(\ocirc{\B}^{n-1}_{K})/\ker d \, \wh{\otimes}_K C(-j)$. Thus in the spectral sequence $E^{p,q}_{2}=H^{p}_{\cts}(\cG_K, H^{q}_{\cts}(\OO^{\times}_{D}, H^{j}_{\proet}(\ocirc{\B}^{n-1}_{C},\Qp)))\Rightarrow H^{p+q}_{\cts}(H, H^{j}_{\proet}(\ocirc{\B}^{n-1}_{C},\Qp))$, the term $E^{p,q}_{2}$ is given by 
	\begin{align*}
		E^{p,q}_{2} &= H^{p}_{\cts}(\cG_K, H^{q}_{\cts}(\OO^{\times}_{D}, \Omega ^{j-1}(\ocirc{\B}^{n-1}_{K})/\ker d\, \wh{\otimes}_K C(-j)))\\
		&= H^{p}_{\cts}(\cG_K, H^{q}_{\cts}(\OO^{\times}_{D}, \Omega ^{j-1}(\ocirc{\B}^{n-1}_{K})/\ker d)\wh{\otimes}_K C(-j))\\
		&= H^{p}_{\cts}(\cG_K, C(-j))\wh{\otimes}_K H^{q}_{\cts}(\OO^{\times}_{D}, \Omega ^{j-1}(\ocirc{\B}^{n-1}_{K})/\ker d).
	\end{align*}
	Using the projection formula from Lemma \ref{lem-proj}, the second equality follows from the fact that $\OO^{\times}_D$ only acts on the differential forms, and the third equality follows from the fact that  $\cG_K$ acts through the Tate twist. By Theorem \ref{thm-Tate}, we get $E^{p,q}_{2}=0$ and $H^{\ast}_{\cts}(H, H^{j}_{\proet}(\ocirc{\B}^{n-1}_{C},\Qp))=0$ when $j\geq 1$. 

	Finally, inserting these computations into the spectral sequence 
	\[
		E^{i,j}_{2}=H^{i}_{\cts}(H, H^{j}_{\proet}(\ocirc{\B}^{n-1}_{C},\Qp))\Rightarrow H^{i+j}_{\proet}([\ocirc{\B}^{n-1}_{C}/H],\Qp),
	\] 
	we get the isomorphisms as desired. 
\end{proof}
Since $H^{\ast}_{\cts}(\cG_K,\Qp)$ is already calculated in Lemma \ref{lem-p-Gal}, Theorem \ref{thm-p-iso} gives an explicit description of $p$-adic pro-\'etale cohomology of the stack $[\LT_{n,\breve{K}}/\G_n]$ in all degrees. 

\begin{rem}
	Following the same steps as in the proof of Theorem \ref{thm-p-iso}, one can also compute the $\ell$-adic pro-\'etale cohomology of $[\cH^{n-1}_{K}/\GL_n(\OO_K)]$ by passing to the Lubin-Tate side, recovering the description given in Theorem \ref{thm-l-iso}. 
\end{rem}

\section{Cohomology of the isogeny stacks}\label{section-5}
We now turn to the computation of  pro-\'etale cohomology of the isogeny stack $[\cH^{n-1}_{K}/\GL_n(K)]$, where many of the auxiliary results from Section \ref{section-4} are still helpful.  

\subsection{\texorpdfstring{$\ell$-adic case}{l-adic case}}
We will compute $H^{\ast}_{\proet}([\cH^{n-1}_{K}/\GL_n(K)],\Ql)$ directly from the Drinfeld side, where the key step is to determine the continuous cohomology of $\GL_n(K)$ over $\Ql$. The cohomology of $\GL_n(K)$ over the trivial module over a ring $R$ has been well-studied when $R$ is equipped with the discrete topology. For example, \cite[Corollaire~2.1.7]{Dat} (see also \cite[Corollary~2]{Orlik2005-1}) gives 
\begin{equation}\label{eq-classical}
	H^{i}_{\cts}(\GL_n(K), R)=\begin{cases}
	R  &  \text{if $i=0,1$}\\
	0  &  \text{otherwise}
	\end{cases}
\end{equation}
if $R$  is any characteristic 0 ring  or  $R=\Z/m\Z$ for suitably chosen $m$, where $R$ is equipped with the discrete topology in both cases.  However, as we pointed out in Section \ref{subsection-3.2}, the module $\Ql$ here arises from the $\ell$-adic pro-\'etale cohomology of Drinfeld spaces, so it is endowed with a natural $\ell$-adic topology as justified by the condensed formalism. Therefore, the result (\ref{eq-classical}) above does not apply directly to our setting; nevertheless, the following analogous statement still holds:

\begin{lem}\label{lem-GL-Ql}
	Let $\Ql$ be the trivial 1-dimensional $\GL_n(K)$-representation equipped with the $\ell$-adic topology. Then 
	\[
		H^{i}_{\cts}(\GL_n(K),\Ql)=\begin{cases}
		\Ql  &  \text{if $i=0,1$}\\
		0  &  \text{otherwise.}
		\end{cases}
	\] 
\end{lem}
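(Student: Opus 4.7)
The strategy is to reduce to $\PGL_n(K)$ via the central extension, since the center $K^\times$ is the only source of infinite, non-compact behavior we need to track separately. Consider
\[
1 \to K^\times \to \GL_n(K) \to \PGL_n(K) \to 1
\]
and the associated Hochschild--Serre spectral sequence
\[
E_2^{p,q} = H^p_{\cts}(\PGL_n(K), H^q_{\cts}(K^\times, \Ql)) \Rightarrow H^{p+q}_{\cts}(\GL_n(K), \Ql),
\]
where $\PGL_n(K)$ acts trivially on the cohomology of $K^\times$ because $K^\times$ is central.

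First I would compute $H^*_{\cts}(K^\times,\Ql)$. Fixing a uniformizer $\pi$ yields a topological splitting $K^\times \cong \OO_K^\times \times \pi^\Z$. Since $\OO_K^\times$ is compact and contains a pro-$p$ subgroup of finite index, the restriction--corestriction argument used in Lemma \ref{lem-int-l} applies verbatim and gives $H^q_{\cts}(\OO_K^\times, \Ql)=0$ for $q>0$. Combined with $H^*_{\cts}(\Z,\Ql)$ this produces $H^*_{\cts}(K^\times,\Ql)\cong \Ql$ concentrated in degrees $0$ and $1$.

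Next I would compute $H^*_{\cts}(\PGL_n(K),\Ql)$ using the action of $\PGL_n(K)$ on its Bruhat--Tits building $X$, a contractible simplicial complex of dimension $n-1$ whose simplex stabilizers are compact open parahoric subgroups. Each such stabilizer contains a pro-$p$ subgroup of finite index, so Lemma \ref{lem-int-l} again gives vanishing in positive degrees. The equivariant (condensed) spectral sequence
\[
E_1^{p,q} = \prod_{\sigma \in X^{(p)}/\PGL_n(K)} H^q_{\cts}(G_\sigma,\Ql) \Rightarrow H^{p+q}_{\cts}(\PGL_n(K),\Ql)
\]
then collapses onto the $q=0$ row, where it becomes the simplicial cochain complex of $X/\PGL_n(K)$. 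Because $\PGL_n(K)$ is type-preserving and transitive on chambers, the quotient $X/\PGL_n(K)$ is a single $(n-1)$-simplex, whose cohomology is $\Ql$ in degree $0$ and vanishes otherwise. Substituting into Hochschild--Serre, only $E_2^{0,0}$ and $E_2^{0,1}$ survive, which gives the claim.

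\textbf{Main obstacle.} The delicate point is the building argument: setting up the equivariant spectral sequence above in the continuous / condensed framework (a condensed Borel--Serre-type double complex), verifying that the parahoric stabilizers are compatible with the pro-$p$ vanishing of Lemma \ref{lem-int-l}, and identifying the $q=0$ row with simplicial cochains of the single-simplex quotient. Once these geometric inputs are in place, the rest is routine bookkeeping with Hochschild--Serre. An alternative route through the extension $1\to\GL_n(K)^{1}\to\GL_n(K)\to\Z\to 1$ with $\GL_n(K)^1 = \ker(v\circ\det)$ would ultimately face the same building-theoretic issue, so the approach above seems the most economical.
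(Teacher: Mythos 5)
The route through the central extension $1 \to K^\times \to \GL_n(K) \to \PGL_n(K) \to 1$ is in spirit parallel to the paper's use of $1 \to \SL_n(K) \to \GL_n(K) \xrightarrow{\det} K^\times \to 1$, and your computation of $H^*_{\cts}(K^\times,\Ql)$ is correct. However, the building argument for $\PGL_n(K)$ has a genuine error: $\PGL_n(K)$ is \emph{not} type-preserving on the Bruhat--Tits building. An element $g\in\GL_n(K)$ shifts vertex types by $v(\det g)\bmod n$, and since the center only contributes shifts by multiples of $n$, the quotient $\PGL_n(K)$ still realizes the full cyclic shift of types; in particular $\PGL_n(K)$ is already transitive on vertices, so $X/\PGL_n(K)$ has a single vertex and is not an $(n-1)$-simplex. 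More seriously, because a facet stabilizer in $\PGL_n(K)$ can permute the vertices of the facet, the equivariant spectral sequence you wrote down is not correct as stated: its $E_1$-term should read $H^q_{\cts}(G_\sigma,\varepsilon_\sigma)$ for the orientation character $\varepsilon_\sigma$, not $H^q_{\cts}(G_\sigma,\Ql)$, and the $q=0$ row is not the naive simplicial cochain complex of $X/G$. When $\varepsilon_\sigma$ is nontrivial, $H^0_{\cts}(G_\sigma,\varepsilon_\sigma)=0$, so the ranks come out differently from what you assume.

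The paper sidesteps exactly this issue by passing to $\SL_n(K)$, which \emph{is} type-preserving: a type-preserving element stabilizing a facet must fix each vertex (vertices of a facet have pairwise distinct types), so the facet stabilizers fix facets pointwise and the orientation characters are trivial. The Schneider--Stuhler resolution from \cite{SS1997-1}, which packages the orientation characters systematically, then yields the vanishing $H^i_{\cts}(\SL_n(K),\Ql)=0$ for $i>0$ via the pro-$p$ restriction--corestriction argument of Lemma \ref{lem-int-l}, and the extension $1\to\SL_n(K)\to\GL_n(K)\to K^\times\to 1$ finishes the job. Your argument could be repaired either by switching to $\SL_n(K)$ (the determinant sequence then replaces the central extension), or by keeping $\PGL_n(K)$ but carrying along the orientation characters and verifying the resulting $q=0$ row still yields $\Ql$ in degree $0$ and nothing else — but that verification is precisely the content you elided, and without it the argument does not go through.
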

\begin{proof}
	We first compute the group cohomology of $\SL_n(K)$ over $\Ql$. Denote $\SL_n(K)$ by $G$. Following notations from \cite[\S II.1]{SS1997-1}, we denote by $X$ the Bruhat-Tits building of $G$, by $X_i$ (resp. $X_{(i)}$)  the set of $i$-dimensional facets (resp. oriented facets) in $X$, by $R_i$ the finite set of $G$-orbit representatives in $X_i$. For a facet $F\in X_i$, let $P^{\dagger}_{F}$ (resp. $P_F$) be the $G$-stabilizer (resp. pointwise $G$-stabilizer) of $F$ and let $\varepsilon _{F}: P^{\dagger}_{F}\rightarrow \{\pm 1\}$ be the orientation character as defined in \cite[\S III.4]{SS1997-1}, which is trivial when restricted to $P_{F}\subseteq P^{\dagger}_{F}$. By \cite[Theorem~II.3.1]{SS1997-1} and \cite[proof of Proposition~III.4.1]{SS1997-1}, we have an exact sequence 
	\[
	0\rightarrow \bigoplus^{}_{F\in R_{n-1}}\cInd^{G}_{P^{\dagger}_{F}}\varepsilon_F\rightarrow \cdots \rightarrow \bigoplus^{}_{F\in R_0} \cInd^{G}_{P^{\dagger}_{F}}\varepsilon_F\rightarrow \Z\rightarrow 0
	\] 
	of smooth representations of $G$; see also \cite[eq.~(4)]{Paulina}. Tensoring this with  $\Ql$ then gives an exact resolution of the representation $\Ql$: 
	\begin{equation}\label{eq-res-Ql}
	0\rightarrow \bigoplus^{}_{F\in R_{n-1}}\cInd^{G}_{P^{\dagger}_{F}}\varepsilon_F\rightarrow \cdots \rightarrow \bigoplus^{}_{F\in R_0} \cInd^{G}_{P^{\dagger}_{F}}\varepsilon_F\rightarrow \Ql\rightarrow 0. 
	\end{equation}
	Applying the continuous cochain functor $C^{\bullet}(G, -)$ to (\ref{eq-res-Ql}) then gives us a double complex. By Shapiro's lemma, the spectral sequence associated to such a double complex is given by 
	\begin{equation}\label{eq-S1}
		E^{i,j}_{1}=\bigoplus^{}_{F\in R_i}H^{j}_{\cts}(P^{\dagger}_{F}, \varepsilon_F)\Rightarrow H^{i+j}_{\cts}(G, \Ql). 
	\end{equation}
	By \cite[\S III.4]{SS1997-1}, since our group $G$ has anisotropic center, the stabilizer group $P^{\dagger}_{F}$ is compact and $P_F\subseteq P^{\dagger}_{F}$ has finite index. Then  \cite[\S II.1]{SS1997-1} further implies that for each facet $F$ we can find  an exact sequence $1\rightarrow U_F\rightarrow P^{\dagger}_{F}\rightarrow I_F\rightarrow 1$ with $U_F\subseteq P_F\subseteq P^{\dagger}_{F}$ pro-$p$ and $I_F$ finite.  Since $\varepsilon|_{P_F}$ is trivial, the same argument as in the proof of Lemma \ref{lem-int-l} implies that $H^{j}_{\cts}(P^{\dagger}_{F}, \Ql)=\Ql$ when $j=0$ and vanishes when $j>0$. Thus the spectral sequence (\ref{eq-S1}) collapses to the 0th row with $E^{i,0}_{1}=\bigoplus^{}_{F\in R_i}\Ql$. But notice that $(E^{\bullet, 0}_{1}, d_1)$ is exactly the augmented cellular cochain complex of $X$ with coefficient in $\Ql$. Since $X$ is contractible, we have $E^{0,0}_{2}=\Ql$ and $E^{i,0}_{2}=0$ for $i>0$. This implies  $H^{i}_{\cts}(G, \Ql)=\Ql$ when $i=0$ and vanishes when $i>0$. 

	Now consider the short exact sequence $1\rightarrow \SL_n(K)\rightarrow \GL_n(K)\xrightarrow{\det}K^{\times}\rightarrow 1$. Using its associated spectral sequence and our computation of $H^{\ast}_{\cts}(\SL_n(K), \Ql)$ above, we get $H^{\ast}_{\cts}(\GL_n(K), \Ql)\cong H^{\ast}_{\cts}(K^{\times}, \Ql)$. Write $K^{\times}\cong \pi^{\Z}\times \OO_K^{\times}$, then Lemma \ref{lem-int-l} further implies $H^{\ast}_{\cts}(K^{\times}, \Ql)\cong H^{\ast}_{\cts}(\Z, \Ql)$, which is isomorphic to $\Ql$ in degree 0 and 1 and vanishes in higher degrees.  
\end{proof}

\begin{thm}\label{thm-l-qiso}
We have isomorphisms 
\[
	H^{r}_{\proet}([\cH^{n-1}_{K}/\GL_n(K)], \Ql)\cong \begin{cases}
	\Ql  &  \text{if $r=0,2$}\\
	\Ql^2  &  \text{if $r=1$}\\
	0  &  \text{otherwise.}
	\end{cases}
\] 
\end{thm}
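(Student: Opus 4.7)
The plan is to mimic the proof of Theorem~\ref{thm-l-iso}: apply the Hochschild--Serre spectral sequence to the $\GL_n(K)$-torsor $\cH^{n-1}_K \to [\cH^{n-1}_K/\GL_n(K)]$ and feed in the two ingredients that are already available, namely the Galois-descended cohomology of the Drinfeld space and the continuous cohomology of $\GL_n(K)$ with trivial $\Ql$-coefficients.

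First, I would use Proposition~\ref{prop-l-descent} to identify $H^j_{\proet}(\cH^{n-1}_K,\Ql)$: it is $\Ql$ for $j=0,1$ and vanishes otherwise. A brief verification is needed to see that these cohomology groups are trivial as $\GL_n(K)$-representations. For $j=0$ this is immediate from connectedness of $\cH^{n-1}_K$. For $j=1$ it follows by tracing through the Galois-descent spectral sequence used in Proposition~\ref{prop-l-descent}: the only surviving $E_2$-term contributing to $H^1_{\proet}(\cH^{n-1}_K,\Ql)$ is $H^1_{\cts}(\GG_K,\Sp_0(\Ql)^{\ast})=H^1_{\cts}(\GG_K,\Ql)$, and since $\GL_n(K)$ commutes with $\GG_K$ and acts trivially on the coefficient module $\Sp_0(\Ql)^{\ast}=\Ql$, the induced action on $H^1_{\proet}(\cH^{n-1}_K,\Ql)$ is trivial.

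Next, I would invoke Lemma~\ref{lem-GL-Ql} to read off the $E_2$-page of the Hochschild--Serre spectral sequence
\[
    E_2^{i,j}=H^i_{\cts}\bigl(\GL_n(K),\, H^j_{\proet}(\cH^{n-1}_K,\Ql)\bigr)\Rightarrow H^{i+j}_{\proet}\bigl([\cH^{n-1}_K/\GL_n(K)],\Ql\bigr).
\]
Combining the two inputs, $E_2^{i,j}\cong \Ql$ for $(i,j)\in\{(0,0),(1,0),(0,1),(1,1)\}$ and vanishes otherwise. Each potentially nonzero higher differential $d_r\colon E_r^{i,j}\to E_r^{i+r,j-r+1}$ lands in a zero term, so the spectral sequence degenerates at $E_2$. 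Reading off the diagonal totals gives $H^0=\Ql$, $H^1=\Ql^2$, $H^2=\Ql$, and $H^r=0$ for $r\geq 3$, which is exactly the claim.

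The only genuinely non-routine point is the triviality of the $\GL_n(K)$-action on $H^1_{\proet}(\cH^{n-1}_K,\Ql)$; once this is in hand, the proof reduces to a purely formal two-input computation on a four-term $E_2$-page. As in Theorem~\ref{thm-l-iso}, the heavy lifting has been done in Lemma~\ref{lem-GL-Ql}, whose proof is the actual technical heart of the $\ell$-adic isogeny-stack computation.
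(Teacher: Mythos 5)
Your proof is correct and follows the same route as the paper: Hochschild--Serre for the $\GL_n(K)$-torsor, fed with Proposition~\ref{prop-l-descent} and Lemma~\ref{lem-GL-Ql}, with the spectral sequence degenerating for degree reasons. The paragraph justifying triviality of the $\GL_n(K)$-action on $H^1_{\proet}(\cH^{n-1}_K,\Ql)$ is harmless but redundant, since Proposition~\ref{prop-l-descent} already states the isomorphism to the trivial module $\Ql$ is $\GL_n(K)$-equivariant.
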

\begin{proof}
Using the spectral sequence 
\[
	E^{i,j}_{2}=H^{i}_{\cts}(\GL_n(K), H^{j}_{\proet}(\cH^{n-1}_{K}, \Ql))\Rightarrow H^{i+j}_{\proet}([\cH^{n-1}_{K}/\GL_n(K)],\Ql),
\] 
	the theorem follows directly from Proposition \ref{prop-l-descent} and Lemma \ref{lem-GL-Ql}. 
\end{proof}

\subsection{\texorpdfstring{$p$-adic case}{p-adic case}}\label{subsection-5.2}
Same as in Section \ref{subsection-4.2}, carrying out computation directly from the Drinfeld side would lead us to the hurdle of determining 
$
	H^{\ast}_{\cts}(\GL_n(K), \Sp_r(\Qp)^{\ast})
$ 
for  $r$ in between 0 and $n-1$. The representations $\Sp_r(\Qp)^{\ast}$ are  Fr\'echet spaces over $\Qp$, and we will see later in Section \ref{section-6} that even computing such continuous cohomologies in the easiest case when $r=0$ requires some effort. 

Thus we again switch to the Lubin-Tate side and compute $H^{\ast}_{\proet}([\cH^{n-1}_{K}/\GL_n(K)],\Qp)$ through computing $H^{\ast}_{\proet}([\P^{n-1}_{\breve{K}}/\G^{0}_{n}],\Qp)$, as suggested by Corollary \ref{cor-iso}. Let $H$ be the group constructed in the proof of Theorem \ref{thm-p-iso}. Then by construction of $H$, we have an isomorphism 
\begin{equation}\label{equation-iso}
	[\P^{n-1}_{\breve{K}}/\G_n]\cong [\P^{n-1}_{C}/H].
\end{equation}
We first factorize the cohomology of $[\P^{n-1}_{C}/H]$. 
\begin{lem}\label{lem-fib1}
	The $p$-adic pro-\'etale cohomology of the stack $[\P^{n-1}_{C}/H]$ can be expressed as 
	\[
		H^{\ast}_{\proet}([\P^{n-1}_{C}/H],\Qp)\cong H^{\ast}_{\proet}([\P^{n-1}_{C}/\cG_K],\Qp)\otimes H^{\ast}_{\cts}(\OO^{\times}_D, \Qp).
	\] 
\end{lem}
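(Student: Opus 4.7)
The plan is to factor the computation using the two-step quotient $[\P^{n-1}_C/H] \cong [[\P^{n-1}_C/\OO_D^\times]/\cG_K]$ arising from the short exact sequence $0 \to \OO_D^\times \to H \to \cG_K \to 0$, and to apply two nested Hochschild-Serre spectral sequences: one for the $\OO_D^\times$-torsor $\P^{n-1}_C \to [\P^{n-1}_C/\OO_D^\times]$, and then one for the $\cG_K$-torsor $[\P^{n-1}_C/\OO_D^\times] \to [\P^{n-1}_C/H]$.

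For the inner step, the spectral sequence has $E_2$-page $E_2^{i,j} = H^i_{\cts}(\OO_D^\times, H^j_{\proet}(\P^{n-1}_C, \Qp))$. The group $\OO_D^\times$ acts on $\P^{n-1}_C$ through $\OO_D^\times \subset D^\times \hookrightarrow \GL_n(C)$, hence factors through $\PGL_n(C)$, and since $\PGL_n$ is connected, it acts trivially on $H^{\ast}_{\proet}(\P^{n-1}_C, \Qp)$. Combined with Theorem \ref{thm-projective} and the projection formula (Lemma \ref{lem-proj}), this yields
\[
E_2^{i,j} \cong H^i_{\cts}(\OO_D^\times, \Qp) \otimes H^j_{\proet}(\P^{n-1}_C, \Qp).
\]
Degeneration follows from the $\OO_D^\times$-equivariance of $\OO(1)$ via the tautological $\GL_n$-action: its Chern class lifts the generator of $H^{2}_{\proet}(\P^{n-1}_C, \Qp(1))$ to $H^2_{\proet}([\P^{n-1}_C/\OO_D^\times], \Qp(1))$, and a Leray-Hirsch argument then forces all higher differentials to vanish.

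For the outer step, substituting the inner result into the spectral sequence
\[
E_2^{p,q} = H^p_{\cts}(\cG_K, H^q_{\proet}([\P^{n-1}_C/\OO_D^\times], \Qp)) \Rightarrow H^{p+q}_{\proet}([\P^{n-1}_C/H], \Qp)
\]
and invoking the triviality of the $\cG_K$-action on $H^{\ast}_{\cts}(\OO_D^\times, \Qp)$ (as is already implicit in the proof of Theorem \ref{thm-p-iso}, where the analogous factorization of $H^{\ast}_{\cts}(H, \Qp)$ is deduced the same way), the projection formula gives
\[
E_2^{p,q} \cong \bigoplus_{i+j=q} H^i_{\cts}(\OO_D^\times, \Qp) \otimes H^p_{\cts}(\cG_K, H^j_{\proet}(\P^{n-1}_C, \Qp)).
\]
Degeneration is supplied once more by the Chern class of $\OO(1)$, which descends canonically to $\P^{n-1}_K$ and hence lifts to $[\P^{n-1}_C/H]$. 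Rearranging the double sum and recognizing the Galois cohomology of $H^{\ast}_{\proet}(\P^{n-1}_C, \Qp)$ as $H^{\ast}_{\proet}([\P^{n-1}_C/\cG_K], \Qp)$ (via the same spectral sequence for the single $\cG_K$-quotient, which degenerates by the same Chern-class argument) then yields the stated tensor decomposition.

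The principal obstacle is verifying the two triviality claims — that $\OO_D^\times$ acts trivially on $H^{\ast}_{\proet}(\P^{n-1}_C, \Qp)$ and that $\cG_K$ acts trivially on $H^{\ast}_{\cts}(\OO_D^\times, \Qp)$ — together with degeneration of both spectral sequences. The first is a standard connectedness argument; the second parallels the reasoning implicit in Theorem \ref{thm-p-iso} and can be made precise by inspecting the $\hat{\Z}$-action on the Lie algebra decomposition from Proposition \ref{prop-generalization}; and both degenerations reduce to the existence of an equivariant lift of the hyperplane class.
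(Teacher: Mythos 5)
Your proposal follows essentially the same two-step strategy as the paper: exploit the short exact sequence $0\to\OO_D^\times\to H\to\cG_K\to 0$ to factor $[\P^{n-1}_C/H]$ through $[\P^{n-1}_C/\OO_D^\times]$, and run two nested Hochschild--Serre spectral sequences. Your Chern-class/Leray--Hirsch justification of degeneration for the inner sequence is in fact cleaner than the paper's parity argument (the paper's assertion that ``only even rows nonzero'' makes $\eta$ a permanent cycle glosses over $d_3\colon E_3^{0,2}\to E_3^{3,0}$, which the multiplicative lift of the hyperplane class genuinely kills); and for the outer sequence the paper instead observes that $E_2^{i,j}=0$ for $i\geq 2$ so degeneration is automatic, which is somewhat simpler than invoking the Chern class again.

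One loose thread: you justify the triviality of the $\cG_K$-conjugation action on $H^{\ast}_{\cts}(\OO_D^\times,\Qp)$ by pointing to the proof of Theorem \ref{thm-p-iso}, but that proof does not actually establish this fact -- it tacitly relies on it too. The paper resolves it here by citing \cite[Proposition~3.8.1]{BSSW-1}, which shows that the conjugation action of $D^\times$ (hence of $\varpi$, hence of the $\hat{\Z}$-quotient through which $\cG_K$ acts) on $H^\ast_{\cts}(\OO_D^\times,\Qp)$ is trivial. Your alternative suggestion -- tracing the $\hat{\Z}$-action through the Lie algebra identification in Proposition \ref{prop-generalization}, where $\Ad(\varpi)$ becomes an inner automorphism after base change -- is the right way to make this self-contained, but it should be carried out rather than gestured at; as written, the appeal to Theorem \ref{thm-p-iso} is circular.
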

\begin{proof}
	We start by describing the cohomology of $[\P^{n-1}_{C}/\OO_D^{\times}]$. As the cohomology classes of $\P^{n-1}_{C}$ are hyperplanes, $\OO^{\times}_{D}$ acts trivially on $H^{\ast}_{\proet}(\P^{n-1}_{C}, \Qp)$ thus  the terms in the Hochschild-Serre spectral sequence converging to $H^{\ast}_{\proet}([\P^{n-1}_{C}/\OO^{\times}_{D}], \Qp)$ are given by $E^{i,j}_{2}=H^{i}_{\cts}(\OO_D^{\times}, \Qp)\otimes H^{j}_{\proet}(\P^{n-1}_{C}, \Qp)$. Let $\eta\in E^{0,2}_{2}$ be the hyperplane class. Since only the even rows in $E^{i,j}_{2}$ are nonzero, $\eta$ is a permanent cycle. As the spectral sequence is multiplicative and its differentials are derivations, $d_r(\eta)=0$ for all $r\geq 2$ implies $d_r(\eta^{m})=0$ for all $r\geq 2$ and all powers $\eta^{m}$.  Since  powers of $\eta$ generate $H^{\ast}_{\proet}(\P^{n-1}_{C}, \Qp)$, the spectral sequence degenerates on the $E_2$-page and gives $H^{\ast}_{\proet}([\P^{n-1}_{C}/\OO^{\times}_{D}], \Qp)\cong H^{\ast}_{\proet}(\P^{n-1}_{C}, \Qp)\otimes H^{\ast}_{\cts}(\OO^{\times}_{D}, \Qp)$. 

	Using such a tensor product decomposition,  consider the spectral sequence corresponding to further quotienting $[\P^{n-1}_{C}/\OO^{\times}_{D}]$ by $\cG_K$
	\begin{equation}\label{eq-conj}
		E^{i,j}_{2}=H^{i}_{\cts}(\cG_K, \bigoplus^{}_{a+b=j}H^{a}_{\proet}(\P^{n-1}_{C}, \Qp)\otimes H^{b}_{\cts}(\OO^{\times}_{D}, \Qp))\Rightarrow H^{i+j}_{\proet}([\P^{n-1}_{C}/H], \Qp),
	\end{equation} 
	where $\cG_K$ acts on the factor $H^{a}_{\proet}(\P^{n-1}_{C}, \Qp)$ through  Tate twist and acts on the factor $H^{b}_{\cts}(\OO^{\times}_{D}, \Qp)$ through conjugation. By Theorem \ref{thm-projective} and Lemma \ref{lem-p-Gal}, we see that $E^{i,j}_{2}=0$ for $i\geq 2$. Thus   (\ref{eq-conj}) degenerates on the $E_2$-page and gives $H^{\ast}_{\proet}([\P^{n-1}_{C}/H], \Qp)\cong H^{\ast}_{\proet}([\P^{n-1}_{C}/\cG_K], \Qp)\otimes H^{\ast}_{\cts}(\OO^{\times}_{D}, \Qp)^{\cG_K}$. By construction of the group $H$, the conjugation action of $\cG_K$ on $\OO^{\times}_{D}$ factors through the quotient $\hat{\Z}$, where $1\in \hat{\Z}$ acts on $\OO^{\times}_{D}$ by $\Ad(\varpi)$. In \cite[Proposition~3.8.1]{BSSW-1}, it is shown that the conjugation action of $D^{\times}$ on $H^{\ast}_{\cts}(\OO^{\times}_{D}, \Qp)$ is trivial if $D$ is defined over $\Qp$, and the argument there naturally extends to the case when $D$ is defined over a finite extension $K/\Qp$. Thus we have  $H^{\ast}_{\cts}(\OO^{\times}_{D}, \Qp)^{\cG_K}=H^{\ast}_{\cts}(\OO^{\times}_{D}, \Qp)$, which gives the result. 
\end{proof}

\begin{thm}\label{thm-p-qiso}
	Suppose $K$ is a finite extension of $\Qp$ of degree $d$. There is an isomorphism of graded $\Qp$-vector spaces 
	\[
		H^{\ast}_{\proet}([\P^{n-1}_{\breve{K}}/\G_n^{0}],\Qp)\cong H^{\ast}_{\proet}(\P^{n-1}_{K},\Qp)\otimes \Lambda _{\Qp}(x_1,x_3,\cdots,x_{2n-1})^{\otimes d}\otimes \Lambda _{\Qp}(y),
	\] 
	where $\left|x_i\right|=i$, $\left|y\right|=1$, and 
	\[
		H^{\ast}_{\proet}(\P^{n-1}_{K},\Qp)=\begin{cases}
		\Qp  &  \text{if $\ast=0$}\\
		\Qp^{d+1}  &  \text{if $\ast=1$}\\
		\Qp^{d}  &  \text{if $3\leq \ast\leq 2n-1$ and $\ast$ is odd}\\
			0 & \text{otherwise.}
		\end{cases}
	\] 
\end{thm}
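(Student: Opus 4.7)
The strategy parallels that of Theorem~\ref{thm-p-iso}, with an extra step to account for the quotient $D^\times/\OO_D^\times \cong \Z$ separating $\G_n^0$ from $\G_n$. Following the construction of $H$ in the proof of Theorem~\ref{thm-p-iso}, I would first define $H^0$ as the fiber product of $\G_n^0$ and $\cG_K$ over $\Gal(\breve{K}/K) \cong \hat{\Z}$. Then $H^0$ fits into a short exact sequence $0 \to D^\times \to H^0 \to \cG_K \to 0$, and there is an identification $[\P^{n-1}_{\breve{K}}/\G_n^0] \cong [\P^{n-1}_C/H^0]$. Using $\OO_D^\times \subset D^\times$, the group $H^0$ also sits in a short exact sequence $1 \to H \to H^0 \to \Z \to 1$, where the quotient is generated by the image of the uniformizer $\varpi \in D^\times$.

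Next, I would run the Hochschild-Serre spectral sequence
\[
E_2^{i,j} = H^i_{\cts}(\Z, H^j_{\proet}([\P^{n-1}_C/H], \Qp)) \Rightarrow H^{i+j}_{\proet}([\P^{n-1}_C/H^0], \Qp).
\]
The key point is that $\Z = \langle\varpi\rangle$ acts trivially on $H^*_{\proet}([\P^{n-1}_C/H], \Qp)$. Using the tensor decomposition $H^*_{\proet}([\P^{n-1}_C/H], \Qp) \cong H^*_{\proet}([\P^{n-1}_C/\cG_K], \Qp) \otimes H^*_{\cts}(\OO_D^\times, \Qp)$ from Lemma~\ref{lem-fib1}, this can be checked factor by factor: on the second factor $\varpi$ acts by conjugation $\Ad(\varpi)$, trivially on cohomology by the extension of \cite[Proposition~3.8.1]{BSSW-1} already invoked in Lemma~\ref{lem-fib1}; on the first factor $\varpi$ acts as an algebraic automorphism of $\P^{n-1}_{\breve{K}}$ inherited from its action on $\LT_{n,\breve{K}}$ via Gross-Hopkins, and such automorphisms act trivially on $H^*_{\proet}(\P^{n-1}, \Qp)$ since the latter is generated by powers of the hyperplane class and elements of $\PGL_n$ fix the class of $\OO(1)$ in $\Pic(\P^{n-1})$. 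Since $H^*_{\cts}(\Z, \Qp) \cong \Lambda_{\Qp}(y)$ with $|y|=1$ has only two nonzero rows, the spectral sequence degenerates at $E_2$ and yields
\[
H^*_{\proet}([\P^{n-1}_C/H^0], \Qp) \cong H^*_{\proet}([\P^{n-1}_C/H], \Qp) \otimes \Lambda_{\Qp}(y).
\]

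Finally, Proposition~\ref{prop-generalization} identifies $H^*_{\cts}(\OO_D^\times, \Qp) \cong \Lambda_{\Qp}(x_1, x_3, \dots, x_{2n-1})^{\otimes d}$, and I would compute the remaining factor $H^*_{\proet}(\P^{n-1}_K, \Qp) = H^*_{\proet}([\P^{n-1}_C/\cG_K], \Qp)$ by Galois descent: the Hochschild-Serre spectral sequence
\[
E_2^{i,j} = H^i_{\cts}(\cG_K, H^j_{\proet}(\P^{n-1}_C, \Qp)) \Rightarrow H^{i+j}_{\proet}(\P^{n-1}_K, \Qp),
\]
combined with Theorem~\ref{thm-projective} (giving $H^{2k}_{\proet}(\P^{n-1}_C, \Qp) \cong \Qp(-k)$ for $0 \leq k \leq n-1$ and vanishing otherwise) and Lemma~\ref{lem-p-Gal}, produces $E_2^{0,0} = \Qp$, $E_2^{1,0} = \Qp^{d+1}$, $E_2^{1,2k} = \Qp^d$ for $1 \leq k \leq n-1$, and zero elsewhere. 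Since only two columns are nonzero the sequence collapses, yielding exactly the dimensions claimed for $H^*_{\proet}(\P^{n-1}_K, \Qp)$. The main obstacle will be verifying triviality of the $\varpi$-action on the projective-space factor, as the $D^\times$-action on $\P^{n-1}_{\breve{K}}$ descended from the Lubin-Tate side is non-trivial on the space and one must carefully track its behavior through all the preceding identifications, ensuring compatibility with the $\cG_K$-descent before appealing to the hyperplane-class argument.
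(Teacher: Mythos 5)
Your proposal follows the same high-level strategy as the paper — split off the $\Z$-quotient separating $\G_n^0$ from $\G_n$, show $\Z$ acts trivially on the cohomology of the remaining stack, and degenerate the Hochschild--Serre spectral sequence using $H^*_{\cts}(\Z,\Qp)\cong\Lambda_{\Qp}(y)$ — and it arrives at the correct conclusion. The one genuinely different point is how triviality of the $\Z$-action is established, and here the paper's route is cleaner than yours. The paper exploits directly that $\G^0_n \cong \G_n\times\Z$ is a direct product: since $\Z$ acts trivially on $H^*_{\proet}(\P^{n-1}_{\breve{K}},\Qp)$ (the hyperplane-class argument) and commutes with $\G_n$, the $\G_n$-descent spectral sequence $\P^{n-1}_{\breve{K}}\to[\P^{n-1}_{\breve{K}}/\G_n]$ is $\Z$-equivariant with trivial $\Z$-action on its $E_2$-page, hence $\Z$ acts trivially on $H^*_{\proet}([\P^{n-1}_{\breve{K}}/\G_n],\Qp)$ with no further work. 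You instead propose to check the triviality factor by factor on the tensor decomposition from Lemma~\ref{lem-fib1}; this is plausible, but it requires separately verifying that the decomposition (itself the output of two successive spectral-sequence degenerations) is compatible with the $\varpi$-action, which is precisely the ``main obstacle'' you flag at the end. The paper's argument applies the hyperplane-class observation once, at the level of $\P^{n-1}_{\breve{K}}$ itself, before any stacky quotients enter the picture, thereby sidestepping the compatibility bookkeeping entirely; if you adopt that ordering of steps, the concern you raise simply does not arise. The remainder of your argument — the construction of $H^0$ via fiber product over $\hat{\Z}$, the identification $[\P^{n-1}_{\breve{K}}/\G^0_n]\cong[\P^{n-1}_C/H^0]$, the application of Proposition~\ref{prop-generalization}, and the Galois-descent computation of $H^*_{\proet}(\P^{n-1}_K,\Qp)$ via Theorem~\ref{thm-projective} and Lemma~\ref{lem-p-Gal} — matches the paper's proof.
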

\begin{proof}
	Recall from Section \ref{section-2} that $\G^{0}_{n}\cong \G_n\times \Z$.  Since cohomology classes of  $\P^{n-1}_{\breve{K}}$ are hyperplanes, $\Z$ acts trivially on $H ^{\ast}_{\proet}(\P^{n-1}_{\breve{K}}, \Qp)$. As the actions of $\Z$ and $\G_n$ on $\P^{n-1}_{\breve{K}}$ are also commutative, the spectral sequence for the quotient $\P^{n-1}_{\breve{K}}\rightarrow [\P^{n-1}_{\breve{K}}/\G_n]$ is $\Z$-equivariant. Thus $\Z$ also acts trivially on $H^{\ast}_{\proet}([\P^{n-1}_{\breve{K}}/\G_n], \Qp)$. By direct computation, the continuous group cohomology of $\Z$ over the trivial module $\Qp$ is given by $H^{\ast}_{\cts}(\Z, \Qp)\cong \Lambda _{\Qp}(y)$ with $\left|y\right|=1$. This implies  only the 0th and the 1st columns of the  spectral sequence $E^{i,j}_{2}=H^{i}_{\cts}(\Z, H^{j}_{\proet}([\P^{n-1}_{\breve{K}}/\G_n], \Qp))\Rightarrow H^{i+j}_{\proet}([\P^{n-1}_{\breve{K}}/\G^{0}_{n}], \Qp)$ could be nonzero. Thus it degenerates on the $E_2$-page and gives the factorization $H^{\ast}_{\proet}([\P^{n-1}_{\breve{K}}/\G^{0}_{n}], \Qp)\cong H^{\ast}_{\proet}([\P^{n-1}_{\breve{K}}/\G_n], \Qp)\otimes \Lambda _{\Qp}(y)$. Using the isomorphism in  (\ref{equation-iso}) together with Lemma \ref{lem-fib1} and Proposition \ref{prop-generalization}, we get 
	\[
		H^{\ast}_{\proet}([\P^{n-1}_{\breve{K}}/\G_n],\Qp)\cong H^{\ast}_{\proet}(\P^{n-1}_{K}, \Qp)\otimes \Lambda _{\Qp}(x_1,x_3,\cdots,x_{2n-1})^{\otimes d}.
	\] 
The computation of $H^{\ast}_{\proet}(\P^{n-1}_{K},\Qp)$ is immediate from Theorem \ref{thm-projective} and Lemma \ref{lem-p-Gal}. 
\end{proof}

\begin{rem}
	If we try to recover Theorem \ref{thm-l-qiso} via passing to the Lubin-Tate side and computing the cohomology groups  $H^{\ast}_{\proet}([\P^{n-1}_{\breve{K}}/\G^{0}_{n}],\Ql)$, the steps are much simpler than what we did here in the $p$-adic case since all of the spectral sequences are readily degenerating. 
\end{rem}

\section{\texorpdfstring{Application: group cohomology for $\GL_2(\Qp)$}{Application}}\label{section-6}
As we stated in Section \ref{subsection-5.2}, the problem of computing $H^{\ast}_{\cts}(\GL_n(K),\Sp_r(\Qp)^{\ast})$ appears to be challenging. As we will later see in Proposition \ref{prop-GLQp}, even when $r=0$ (so that $\Sp_r(\Qp)^{\ast}\cong \Qp$) and $K=\Qp$, the groups $H^{\ast}_{\cts}(\GL_n(K),\Qp)$ already differ significantly from the discrete case in (\ref{eq-classical}) and the $\ell$-adic case in Lemma \ref{lem-GL-Ql}. But now since Theorem \ref{thm-p-qiso} gives us a description of the  $p$-adic pro-\'etale cohomology of $[\cH^{n-1}_{K}/\GL_n(K)]$, it is natural to ask whether it is possible to deduce some information about the continuous cohomology groups $H^{\ast}_{\cts}(\GL_n(K),\Sp_r(\Qp)^{\ast})$ for $0\leq r\leq n-1$ in return using the spectral sequence
\begin{equation}\label{equation-4}
\begin{split}
	E^{i,j}_{2} &=H^{i}_{\cts}(\GL_n(K), H^{j}_{\proet}(\cH^{n-1}_{\Qp},\Qp))\\
	&= H^{i}_{\cts}(\GL_n(K), \Sp_j(\Qp)^{\ast}) \Rightarrow H^{i+j}_{\proet}([\cH^{n-1}_{K}/\GL_n(K)],\Qp). 
\end{split}
\end{equation}

For simplicity, we will focus on the case when $K=\Qp$. We start with  the case when $r=0$, and $\Sp_0(\Qp)^{\ast}$ is nothing but the trivial representation of $\GL_n(\Qp)$ over $\Qp$. The following lemma is an  immediate consequence of a theorem of Casselman-Wigner generalizing Lazard's comparison theorem between group cohomology and Lie algebra cohomology. 

\begin{lem}\label{lem-SLQp}
	Let $G$ be $\SL_n(\Qp)$ or $\PGL_n(\Qp)$ and let $\Qp$ be the trivial 1-dimensional $G$-representation equipped with the $p$-adic topology. Then we have
	\[
		H^{\ast}_{\cts}(G,\Qp)\cong \Lambda _{\Qp}(x_3,x_5,\cdots,x_{2n-1})
	\] 
	as graded $\Qp$-algebras where $\left|x_i\right|=i$ in the exterior algebra on the right-hand side.
\end{lem}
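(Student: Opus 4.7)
The plan is to invoke the Casselman--Wigner theorem, which extends Lazard's comparison isomorphism from compact analytic groups to reductive $p$-adic groups: for a reductive analytic group $G$ over $\Qp$ with Lie algebra $\g$, continuous cohomology with trivial $\Qp$-coefficients satisfies
$$H^{\ast}_{\cts}(G, \Qp) \cong H^{\ast}_{\Lie}(\g, \Qp).$$
Both $\SL_n(\Qp)$ and $\PGL_n(\Qp)$ are reductive over $\Qp$, and their Lie algebras are canonically isomorphic to $\sl_n\Qp$ (the identification $\mathfrak{pgl}_n \cong \sl_n$ holding in characteristic zero). Thus both cases reduce to computing $H^{\ast}_{\Lie}(\sl_n\Qp, \Qp)$.

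I would then extract this Lie algebra cohomology by first base-changing to an algebraic closure, exactly as in the proof of Proposition \ref{prop-generalization}. Over $\Qpbar$, the classical Chevalley--Eilenberg theorem identifies $H^{\ast}_{\Lie}(\sl_n\Qpbar, \Qpbar)$ with the de Rham cohomology of the compact form $\SU(n)$, which is an exterior algebra on primitive generators of odd degrees $3, 5, \ldots, 2n-1$ (the transgressions of the fundamental invariants of degrees $2, 3, \ldots, n$ in $\Sym \sl_n^{\ast}$). To descend the presentation back to $\Qp$, I would repeat verbatim the non-abelian Hilbert~90 argument from the proof of Proposition \ref{prop-generalization}: the group of automorphisms of $\Lambda_{\Qpbar}(x_3, \ldots, x_{2n-1})$ preserving primitive elements is a product of copies of $\GL_1$, and its first continuous Galois cohomology over $\cG_{\Qp}$ vanishes.

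The main obstacle is really the first step: one needs the correct version of Casselman--Wigner applicable to our non-compact groups with \emph{continuous} (in the condensed sense used throughout this paper) trivial $\Qp$-coefficients. Since the coefficients are finite-dimensional with trivial topology, this reduces to the classical statement, but care is needed because $\SL_n(\Qp)$ is not compact so Lazard alone does not suffice. An alternative route, bypassing this subtlety for $\SL_n(\Qp)$, would be to first compute $H^{\ast}_{\cts}(\GL_n(\Qp), \Qp) \cong \Lambda_{\Qp}(x_1, x_3, \ldots, x_{2n-1})$ and then use the Hochschild--Serre spectral sequence for $1 \to \SL_n(\Qp) \to \GL_n(\Qp) \xrightarrow{\det} \Qp^{\times} \to 1$, where $H^{\ast}_{\cts}(\Qp^{\times}, \Qp) \cong \Lambda_{\Qp}(x_1)$ accounts precisely for the degree-$1$ generator, leaving $\Lambda_{\Qp}(x_3, \ldots, x_{2n-1})$ as the contribution from $\SL_n(\Qp)$; a similar isogeny argument then handles $\PGL_n(\Qp)$.
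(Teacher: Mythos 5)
Your main line of attack matches the paper's: invoke Casselman--Wigner to reduce to Lie algebra cohomology, then identify $H^{\ast}_{\Lie}(\sl_n,-)$ as the primitively generated exterior algebra coming from the compact form $\SU(n)$. The one place you diverge is in how you get the exterior algebra over $\Qp$. You propose to base-change up to $\Qpbar$ and then run the non-abelian Hilbert~90 descent from Proposition~\ref{prop-generalization}. That argument is valid here (with $d=1$ the automorphism group of primitives is a torus and $H^1(\cG_{\Qp},\G_m)=0$), but it is superfluous: $\sl_n$ is already split over $\Q$, so the Chevalley--Eilenberg complex, being a finite-dimensional complex defined over $\Q$, base-changes along $\Q\hookrightarrow\Qp$ with no Galois descent needed, and the paper simply writes $H^{\ast}_{\Lie}(\sl_n(\Qp),\Qp)\cong H^{\ast}_{\Lie}(\sl_n(\Q),\Q)\otimes\Qp$. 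The Hilbert~90 step in Proposition~\ref{prop-generalization} is only there because $\Res_{K/\Qp}\gl_n K$ and $\Lie\OO_D^{\times}$ are genuine non-split forms; nothing of the kind occurs for split $\sl_n$.

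Your proposed ``alternative route'' via $\GL_n(\Qp)$ has two real problems. First, it is circular relative to the paper's architecture: Proposition~\ref{prop-GLQp} derives $H^{\ast}_{\cts}(\GL_n(\Qp),\Qp)$ from the present Lemma~\ref{lem-SLQp} via the spectral sequence for $1\to\SL_n(\Qp)\to\GL_n(\Qp)\to\Qp^{\times}\to 1$, and $\GL_n$ is reductive but not semisimple so Casselman--Wigner does not directly give $H^{\ast}_{\cts}(\GL_n(\Qp),\Qp)$; you would need an independent input you have not supplied. Second, the numerology you quote is off: $\Qp^{\times}\cong p^{\Z}\times\Zp^{\times}$ contributes \emph{two} exterior generators in degree $1$, so $H^{\ast}_{\cts}(\Qp^{\times},\Qp)\cong\Lambda_{\Qp}(x,y)$, not $\Lambda_{\Qp}(x_1)$, and correspondingly $H^{\ast}_{\cts}(\GL_n(\Qp),\Qp)\cong\Lambda_{\Qp}(x,y,x_3,\dots,x_{2n-1})$, not $\Lambda_{\Qp}(x_1,x_3,\dots,x_{2n-1})$. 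With the wrong count of degree-$1$ generators your spectral-sequence argument would not reproduce the statement of the lemma.
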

\begin{proof}
	First notice that $G$ is the group of $\Qp$-points of a connected semisimple group defined over $\Qp$ with $\Lie G=\sl_n(\Qp)$. Then by \cite[Theorem~1]{CW1974-1}, we have an isomorphism of cohomology rings
	$
		H^{\ast}_{\cts}(G,\Qp)\cong H^{\ast}_{\Lie}(\sl_n(\Qp),\Qp),
	$ 
	where the latter one is further isomorphic to $H^{\ast}_{\Lie}(\sl_n(\Q),\Q)\otimes \Qp$. Now as $\SU(n)$ is a compact form of $\SL_n(\R)$, the Lie algbera cohomology of $\sl_n(\Q)$ is isomorphic to the rational de Rham cohomology of $\SU(n)$. Thus we get $H^{\ast}_{\Lie}(\sl_n(\Q),\Q)\cong H^{\ast}_{\dR}(\SU(n))\cong \Lambda _{\Q}(x_3,x_5,\cdots,x_{2n-1})$ where the degree of $x_i$ is $i$. 
\end{proof}

Now we compute the continuous group cohomology ring $H^{\ast}_{\cts}(\GL_n(\Qp), \Qp)$. 
\begin{prop}\label{prop-GLQp}
	Let $\Qp$ be the trivial 1-dimensional $\GL_n(\Qp)$-representation equipped with the $p$-adic topology. Then we have 
	\[
		H^{\ast}_{\cts}(\GL_n(\Qp),\Qp)\cong \Lambda _{\Qp}(x,y,x_3,x_5,\cdots,x_{2n-1})
	\] 
	as graded $\Qp$-algebras where $\left|x\right|=\left|y\right|=1$ and $\left|x_i\right|=i$ in the exterior algebra on the right-hand side.  
\end{prop}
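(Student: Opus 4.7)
The plan is to reduce to Lemma~\ref{lem-SLQp} via the central short exact sequence
\[
1 \longrightarrow \Qp^{\times} \longrightarrow \GL_n(\Qp) \longrightarrow \PGL_n(\Qp) \longrightarrow 1,
\]
which is exact on $\Qp$-points by Hilbert~90 (since $H^{1}_{\et}(\Qp, \G_m) = 0$), and then to degenerate the associated Hochschild--Serre spectral sequence in the condensed setting of Section~\ref{subsection-3.2}.

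First I would compute $H^{\ast}_{\cts}(\Qp^{\times}, \Qp)$. Writing $\Qp^{\times} \cong p^{\Z} \times \Zp^{\times}$, the torsion subgroup of $\Zp^{\times}$ has order coprime to $p$ and so contributes only in degree $0$, by the averaging argument used in Lemma~\ref{lem-int-l}. The remaining factors are the pro-$p$ group $1 + p\Zp \cong \Zp$ (a $1$-dimensional $\Qp$-analytic group, to which Lazard's theorem applies) and the discrete infinite cyclic group $p^{\Z} \cong \Z$. Each contributes a single degree-$1$ exterior generator, giving $H^{\ast}_{\cts}(\Qp^{\times}, \Qp) \cong \Lambda_{\Qp}(x, y)$ with $|x| = |y| = 1$ via K\"unneth. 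Because the extension is central, $\PGL_n(\Qp)$ acts trivially on this cohomology, and combined with Lemma~\ref{lem-SLQp} the Hochschild--Serre $E_2$-page reads
\[
E^{p,q}_{2} \;\cong\; \Lambda_{\Qp}(x_3, x_5, \ldots, x_{2n-1})^{p} \otimes_{\Qp} \Lambda_{\Qp}(x, y)^{q} \;\Longrightarrow\; H^{p+q}_{\cts}(\GL_n(\Qp), \Qp).
\]
Both $x, y \in E^{0,1}_{2}$ are permanent cycles: $d_{2}$ on them lands in $E^{2,0}_{2} = H^{2}_{\cts}(\PGL_n(\Qp), \Qp) = 0$ (there is no degree-$2$ generator in the exterior algebra of Lemma~\ref{lem-SLQp}), and $d_{r}$ for $r \geq 3$ lands in rows with negative $q$-degree. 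The generators $x_{i}$ on the bottom row are automatically permanent. By the Leibniz rule, every class is a permanent cycle, so the spectral sequence degenerates at $E_2$.

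The remaining point, and the main subtlety, is to upgrade the resulting additive isomorphism on the associated graded to the multiplicative isomorphism claimed. This is clean here because every generator sits in odd degree: in characteristic $0$, graded-commutativity of $H^{\ast}_{\cts}(\GL_n(\Qp), \Qp)$ forces any choice of lifts $\tilde{x}, \tilde{y}, \tilde{x}_{3}, \ldots, \tilde{x}_{2n-1}$ to square to zero and to pairwise anticommute. This produces a well-defined algebra map from $\Lambda_{\Qp}(x, y, x_{3}, \ldots, x_{2n-1})$ to $H^{\ast}_{\cts}(\GL_n(\Qp), \Qp)$, which must be an isomorphism by the graded-dimension count provided by degeneration.
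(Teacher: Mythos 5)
Your argument is correct, and it takes a genuinely different route from the paper. The paper's proof uses the split short exact sequence $1 \to \SL_n(\Qp) \to \GL_n(\Qp) \xrightarrow{\det} \Qp^{\times} \to 1$, computes $H^{\ast}_{\cts}(\Qp^{\times},\Qp)\cong \Lambda_{\Qp}(x,y)$ exactly as you do, and then degenerates the Hochschild--Serre spectral sequence $E_2^{i,j}=H^{i}_{\cts}(\Qp^{\times},H^{j}_{\cts}(\SL_n(\Qp),\Qp))$ by noting that only the columns $i=0,1,2$ can be nonzero (since $\Qp^{\times}$ has cohomological dimension $2$) and that the splitting of the sequence kills $d_2^{0,j}$. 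You instead quotient by the center, using $1 \to \Qp^{\times} \to \GL_n(\Qp) \to \PGL_n(\Qp) \to 1$; this sequence does \emph{not} split in general (e.g.\ for $n=2$ one cannot lift the involution represented by $\left(\begin{smallmatrix}0&1\\p&0\end{smallmatrix}\right)$), so a splitting argument is unavailable, and you replace it with the observation that $d_2$ on the fiber classes $x,y$ lands in $H^{2}_{\cts}(\PGL_n(\Qp),\Qp)=0$ by Lemma~\ref{lem-SLQp}, which is a clean structural vanishing. Both approaches rely on the same two inputs (the computation of $H^{\ast}_{\cts}(\Qp^{\times},\Qp)$ and Lemma~\ref{lem-SLQp}), so neither is more economical, but your degeneration mechanism is different in substance, not just in presentation. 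You are also more careful than the paper about upgrading the $E_\infty$-identification to an algebra isomorphism: the observation that in characteristic zero all lifts of odd-degree exterior generators automatically square to zero and anticommute, so that the map of graded algebras is well-defined and a dimension count finishes, is a point the paper elides. One tiny caveat: when you say the generators $x_i$ in $E_2^{\bullet,0}$ are "automatically permanent," what is literally true is that bottom-row elements cannot \emph{support} differentials (targets lie in negative rows); permanence of the whole page then follows by the Leibniz argument you give, so the conclusion is fine, but the phrasing slightly conflates being a cycle with being a permanent cycle.
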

\begin{proof}
	We first compute the continuous cohomology of  the center $\Qp^{\times}$ of $\GL_n(\Qp)$.  Consider the decomposition $\Qp^{\times}\cong p^{\Z}\times \Zp^{\times}$. By Lemma \ref{lem-Lazard}, we know $H^{\ast}_{\cts}(\Zp^{\times},\Qp)\cong \Lambda _{\Qp}(x)$ with $\left|x\right|=1$. For the other part, we have  $H^{0}_{\cts}(p^{\Z},\Qp)=\Qp$ and $H^{1}_{\cts}(p^{\Z},\Qp)$ is isomorphic to the  group of continuous homomorphisms from $\Z$ to $\Qp$, which is again $\Qp$. As all higher cohomology groups vanish, we have $H^{\ast}_{\cts}(p^{\Z},\Qp)\cong \Lambda _{\Qp}(y)$ with $\left|y\right|=1$. Thus by K\"unneth formula, we have $H^{\ast}_{\cts}(\Qp^{\times},\Qp)\cong \Lambda _{\Qp}(x,y)$. 

	Next, since we have the short exact sequence 
	\begin{equation}\label{eq-split}
		1\rightarrow \SL_n(\Qp)\rightarrow \GL_n(\Qp)\xrightarrow{\det} \Qp^{\times}\rightarrow 1,
	\end{equation}
	we want to access the cohomology ring of $\GL_n(\Qp)$ using the Hochschild-Serre spectral sequence 
	\begin{equation}\label{eq-GLHS}
		E^{i,j}_{2}=H^{i}_{\cts}(\Qp^{\times},H^{j}_{\cts}(\SL_n(\Qp),\Qp))\Rightarrow H^{i+j}_{\cts}(\GL_n(\Qp),\Qp).
	\end{equation}
	By Lemma \ref{lem-SLQp} and our computation of $H^{\ast}_{\cts}(\Qp^{\times},\Qp)$ above, one immediately notices that $E_2^{i,j}=0$ when $i\geq 3$. Since (\ref{eq-split}) splits, the differential maps $d^{0,j}_{2}$ on the $E_2$-page are also 0 for any $j\geq 0$. Thus (\ref{eq-GLHS}) degenerates on the $E_2$-page and we get $H^{\ast}_{\cts}(\GL_n(\Qp),\Qp)\cong H^{\ast}_{\cts}(\SL_n(\Qp),\Qp)\otimes H^{\ast}_{\cts}(\Qp^{\times},\Qp)$. 
\end{proof}

The following lemma gives a  factorization of $H^{\ast}_{\proet}([\cH^{n-1}_{\Qp}/\GL_n(\Qp)],\Qp)$ that we will need later. 
\begin{lem}\label{lem-fib2}
	The $p$-adic pro-\'etale cohomology of the stack $[\cH^{n-1}_{\Qp}/\GL_n(\Qp)]$ can be expressed as 
	\[
		H^{\ast}_{\proet}([\cH^{n-1}_{\Qp}/\GL_n(\Qp)],\Qp) \cong H^{\ast}_{\proet}([\cH^{n-1}_{\Qp}/\PGL_n(\Qp)], \Qp)\otimes H^{\ast}_{\cts}(\Qp^{\times},\Qp).
	\] 
\end{lem}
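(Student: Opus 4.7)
The plan is to exploit the central extension
\[
1 \longrightarrow \Qp^{\times} \longrightarrow \GL_n(\Qp) \longrightarrow \PGL_n(\Qp) \longrightarrow 1,
\]
in which $\Qp^{\times}$ sits as the scalar matrices and acts trivially on $\cH^{n-1}_{\Qp}\subseteq \P^{n-1}_{\Qp}$. The Hochschild-Serre spectral sequence associated to this extension and the $\GL_n(\Qp)$-action on $\cH^{n-1}_{\Qp}$ reads
\[
E_2^{i,j}=H^{i}_{\proet}([\cH^{n-1}_{\Qp}/\PGL_n(\Qp)],H^{j}_{\cts}(\Qp^{\times},\Qp))\Rightarrow H^{i+j}_{\proet}([\cH^{n-1}_{\Qp}/\GL_n(\Qp)],\Qp).
\]
Because $\Qp^{\times}$ is central, its image in $\PGL_n(\Qp)$ is trivial and the conjugation action on $H^{j}_{\cts}(\Qp^{\times},\Qp)$ is trivial, so the coefficient local system is constant and the $E_2$-page decouples as $H^{i}_{\proet}([\cH^{n-1}_{\Qp}/\PGL_n(\Qp)],\Qp)\otimes_{\Qp} H^{j}_{\cts}(\Qp^{\times},\Qp)$. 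By the computation of $H^{\ast}_{\cts}(\Qp^{\times},\Qp)\cong\Lambda_{\Qp}(x,y)$ carried out in the proof of Proposition \ref{prop-GLQp}, only the rows $j=0,1,2$ are nonzero.

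The main step is to show that $x$ and $y$ are permanent cycles, which I would accomplish via the determinant. The homomorphism $\det\colon \GL_n(\Qp)\to\Qp^{\times}$ induces a morphism of stacks $\varphi\colon [\cH^{n-1}_{\Qp}/\GL_n(\Qp)]\to B\Qp^{\times}$, and hence a pullback
\[
\varphi^{\ast}\colon H^{\ast}_{\cts}(\Qp^{\times},\Qp)\longrightarrow H^{\ast}_{\proet}([\cH^{n-1}_{\Qp}/\GL_n(\Qp)],\Qp).
\]
Composing with the edge homomorphism $H^{\ast}_{\proet}([\cH^{n-1}_{\Qp}/\GL_n(\Qp)],\Qp)\twoheadrightarrow E_{\infty}^{0,\ast}\hookrightarrow E_2^{0,\ast}=H^{\ast}_{\cts}(\Qp^{\times},\Qp)$, which is restriction along the gerbe fiber inclusion $B\Qp^{\times}\hookrightarrow [\cH^{n-1}_{\Qp}/\GL_n(\Qp)]$, corresponds at the level of groups to the composite $\Qp^{\times}\hookrightarrow\GL_n(\Qp)\xrightarrow{\det}\Qp^{\times}$, namely the $n$-th power map. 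On $H^{k}_{\cts}(\Qp^{\times},\Qp)$ this is multiplication by $n^{k}$, which is invertible in $\Qp$. Consequently the edge map is surjective in degrees $1$ and $2$, forcing $E_{\infty}^{0,j}=E_2^{0,j}$ for $j=1,2$, so the differentials emanating from $x$ and $y$ vanish on every page.

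The rest is a multiplicativity argument. The bottom row $E_r^{\ast,0}$ consists of permanent cycles for dimensional reasons, and for any $\alpha\in E_r^{i,0}$ and $\beta\in\{x,y,xy\}$ the Leibniz rule gives $d_r(\alpha\beta)=\pm\alpha\, d_r(\beta)=0$, using $d_r(xy)=d_r(x)\,y\pm x\,d_r(y)=0$. Since the only potentially nonzero higher differentials $d_r$ for $r\geq 3$ map row $2$ to row $\leq 0$, they also vanish on $x,y,xy$ by the same dimensional considerations. The spectral sequence degenerates at $E_2$, and as $\Qp$ is a field the filtration splits, giving the claimed isomorphism of graded $\Qp$-vector spaces.

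The step I expect to require the most care is verifying that the edge map in the Hochschild-Serre spectral sequence is identified with the restriction along the gerbe-fiber $B\Qp^{\times}\hookrightarrow[\cH^{n-1}_{\Qp}/\GL_n(\Qp)]$ determined by the scalar inclusion $\Qp^{\times}\hookrightarrow\GL_n(\Qp)$; once this identification is in hand, the $n$-th power calculation is immediate. Multiplicativity of the condensed pro-\'etale Hochschild-Serre spectral sequence, although needed, is standard given the formalism of \cite{Bosco2023-1}.
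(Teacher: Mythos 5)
Your proof is correct, and it takes a genuinely different route to the key surjectivity from the one in the paper. Both arguments run the Hochschild--Serre/Leray--Serre spectral sequence for the gerbe fibration $B\Qp^\times\to[\cH^{n-1}_{\Qp}/\GL_n(\Qp)]\to[\cH^{n-1}_{\Qp}/\PGL_n(\Qp)]$, reduce to showing that $E_2^{0,j}$ consists of permanent cycles for $j=1,2$, and then invoke multiplicativity to conclude degeneration. The difference is in how the surjectivity of the edge map onto $E_2^{0,j}$ is established. The paper compares with the ``bare'' fibration $B\Qp^\times\to B\GL_n(\Qp)\to B\PGL_n(\Qp)$, verifies by dimension counting (from Lemma \ref{lem-SLQp} and Proposition \ref{prop-GLQp}) that that spectral sequence degenerates, and then uses the commutative diagram to pull the surjectivity of $i_2^\ast$ through the factorization $i_2^\ast=i_1^\ast\circ f_1^\ast$. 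You instead construct an explicit near-section: pulling back along $\det\colon[\cH^{n-1}_{\Qp}/\GL_n(\Qp)]\to B\Qp^\times$ and post-composing with the edge map yields the endomorphism of $H^\ast_{\cts}(\Qp^\times,\Qp)$ induced by $z\mapsto z^n$, namely multiplication by $n^k$ in degree $k$, which is invertible over $\Qp$. This forces the edge map to surject onto $E_2^{0,j}$. Your argument is somewhat more self-contained (it avoids the dimension count and the auxiliary second row of the stack diagram), while the paper's route is conceptually parallel to the splitting argument already used in Proposition \ref{prop-GLQp}. The identification of the edge map with restriction along the scalar gerbe fiber, which you flag as the delicate step, is indeed the point where one must be careful, but it follows from the commuting square $f_1\circ i_1 = i_2$ exactly as in the paper's diagram, so your concern is resolved by the same observation the paper makes.
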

\begin{proof}
	Consider the short exact sequence $1\rightarrow \Qp^{\times}\rightarrow \GL_n(\Qp)\rightarrow \PGL_n(\Qp)\rightarrow 1$. From the definition of $\cH^{n-1}_{\Qp}$, we see that $\Qp^{\times}$ acts trivially on $\cH^{n-1}_{\Qp}$. Thus we have  a commutative diagram of stacks:
\[
\begin{tikzcd}
	B\Qp^{\times} \arrow[r, hook, "i_1"] & {[\cH^{n-1}_{\Qp}/\GL_n(\Qp)]} \arrow[r, "\pi_1"]\arrow[d, "f_1"] & {[\cH^{n-1}_{\Qp}/\PGL_n(\Qp)]}\arrow[d, "f_2"]\\
	B\Qp^{\times} \arrow[r, hook, "i_2"] \arrow[u, equal] & B\GL_n(\Qp) \arrow[r, "\pi_2"] & B\PGL_n(\Qp)
\end{tikzcd}
\]
where each row of this diagram gives a fibration of stacks. Specifically, the Leray-Serre spectral sequence corresponding to the fibration described by the first row gives
\[
	E^{i,j}_{2}=H^{i}_{\proet}([\cH^{n-1}_{\Qp}/\PGL_n(\Qp)], R^{j}i_{1,\ast}\underline{\Qp}) \Rightarrow H^{i+j}_{\proet}([\cH^{n-1}_{\Qp}/\GL_n(\Qp)],\Qp)
\] 
	where $R^{j}i_{1,\ast}\underline{\Qp}$ is the $\Qp$-local system associated to the cohomology of the fiber $B\Qp^{\times}$.  Since the conjugation action of $\PGL_n(\Qp)$ on $\Qp^{\times}$ is trivial, the monodromy action is trivial and the local system is in fact constant. Thus the spectral sequence above can be rewritten into 
	\begin{equation}\label{equation-LS2}
		E^{i,j}_{2}=H^{i}_{\proet}([\cH^{n-1}_{\Qp}/\PGL_n(\Qp)], H^{j}_{\cts}(\Qp^{\times},\Qp)) \Rightarrow H^{i+j}_{\proet}([\cH^{n-1}_{\Qp}/\GL_n(\Qp)],\Qp)
	\end{equation}
	and we are left to show this spectral sequence degenerates on the $E_2$-page. 

	By Lemma \ref{lem-SLQp} and Proposition \ref{prop-GLQp} and  dimension-counting,  we see the Leray-Serre spectral sequence associated to the second row of the diagram above degenerates. So the edge homomorphism
	\[
		i_2^{\ast}: H^{j}_{\cts}(\GL_n(\Qp),\Qp)\rightarrow H^{j}_{\cts}(\Qp^{\times},\Qp) 
	\] 
	is surjective for all $j\geq 0$. By commutativity of the diagram, we have $i_2^{\ast}=i_1^{\ast}\circ f_1^{\ast}$. Thus the edge homomorphism of the spectral sequence (\ref{equation-LS2})
	\[
		i_1^{\ast}: H^{j}_{\proet}([\cH^{n-1}_{\Qp}/\GL_n(\Qp)],\Qp)\rightarrow E^{0,j}_{\infty}\subseteq E^{0,j}_{2}=H^{j}_{\cts}(\Qp^{\times}, \Qp)
	\] 
	is surjective for all $j\geq 0$ and (\ref{equation-LS2}) degenerates on the second page. 
\end{proof}

In the rest of this section, we restrict to the case when $n=2$ and use Theorem \ref {thm-p-qiso} together with Proposition \ref{prop-GLQp} to deduce a description of $H^{\ast}_{\cts}(\GL_2(\Qp), \Sp_1(\Qp)^{\ast})$. Notice that when $n=2$, the only $\Sp_r(\Qp)^{\ast}$ appearing are for $r=0$ and $r=1$.

\begin{lem}\label{lem-PGL-deg}
The Hochschild-Serre spectral sequence 
\[
	E^{i,j}_{2}=H^{i}_{\cts}(\PGL_2(\Qp), H^{j}_{\proet}(\cH^{1}_{\Qp}, \Qp))\Rightarrow H^{i+j}_{\proet}([\cH^{1}_{\Qp}/\PGL_2(\Qp)], \Qp)
\] 
degenerates on the $E_2$-page. 
\end{lem}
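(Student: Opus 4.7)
The plan is to analyze the $E_2$-page carefully and identify the very small number of differentials which can a priori be nonzero, then eliminate each in turn. By Proposition \ref{prop-p-descent} applied with $n=2$, $K=\Qp$, $d=1$, only the rows $j=0, 1, 2$ of the $E_2$-page are nonzero. On rows $j=0$ and $j=1$, the group $\PGL_2(\Qp)$ acts trivially, since the classes arise via Galois descent from the trivial representation $H^0_{\proet}(\cH^1_C, \Qp) = \Qp$. Hence, by Lemma \ref{lem-SLQp} combined with the projection formula (Lemma \ref{lem-proj}), rows $0$ and $1$ are concentrated in the columns $i = 0$ and $i = 3$. The only remaining row is $j = 2$, namely $H^i_{\cts}(\PGL_2(\Qp), \Sp_1(\Qp)^{\ast})$, which is a priori unknown. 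Inspecting the bidegree $d_r^{i,j}\colon E_r^{i,j}\to E_r^{i+r,\, j-r+1}$, the only potentially nonzero differentials are $d_2^{1, 2}\colon E_2^{1, 2}\to E_2^{3, 1}$ and $d_3^{0, 2}\colon E_3^{0, 2}\to E_3^{3, 0}$; higher $d_r$ for $r\geq 4$ automatically vanish since their targets lie in rows of negative index.

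To kill $d_3^{0, 2}$, I would argue that $E_2^{0, 2} = H^0_{\cts}(\PGL_2(\Qp), \Sp_1(\Qp)^{\ast})$ itself vanishes. This follows from topological irreducibility of the Steinberg representation $\Sp_1(\Qp)$ as a $\PGL_2(\Qp)$-representation in its LF-topology: a nonzero continuous $\PGL_2(\Qp)$-equivariant functional $\Sp_1(\Qp) \to \Qp$ would give an embedding of an infinite-dimensional topologically irreducible representation into $\Qp$, which is absurd.

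The more delicate step is the vanishing of $d_2^{1, 2}$. Here the approach is to show that $E_2^{3, 1}$ consists entirely of classes which survive to the abutment, forcing the image of $d_2^{1, 2}$ to be zero. Via the cup product on the spectral sequence, $E_2^{3, 1}\cong E_2^{0, 1}\otimes E_2^{3, 0}$, where elements of $E_2^{0, 1}\cong H^1_{\cts}(\cG_{\Qp}, \Qp)$ are pulled back from $\Spec\Qp$ via the structure morphism $[\cH^1_{\Qp}/\PGL_2(\Qp)]\to \Spec\Qp$, and $x_3\in E_2^{3, 0}$ is pulled back from $H^3_{\cts}(\PGL_2(\Qp), \Qp)$ via $[\cH^1_{\Qp}/\PGL_2(\Qp)]\to B\PGL_2(\Qp)$. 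Both classes are permanent cycles (their outgoing differentials land in zero groups for dimensional reasons), and so are their cup products. Combining Lemma \ref{lem-fib2} with Theorem \ref{thm-p-qiso} via the Lubin-Tate correspondence, one computes $\dim H^4_{\proet}([\cH^1_{\Qp}/\PGL_2(\Qp)], \Qp) = 2$. Tracing the cup products $\alpha_i\cdot x_3$ for a basis $\{\alpha_1, \alpha_2\}$ of $E_2^{0, 1}$ through the injection $H^{\ast}_{\proet}([\cH^1_{\Qp}/\PGL_2(\Qp)], \Qp)\hookrightarrow H^{\ast}_{\proet}([\cH^1_{\Qp}/\GL_2(\Qp)], \Qp)\cong H^{\ast}_{\proet}([\P^1_{\breve{\Qp}}/\G^0_2], \Qp)$ furnished by Lemma \ref{lem-fib2} and Corollary \ref{cor-iso}, the explicit tensor-product description in Theorem \ref{thm-p-qiso} shows that $\alpha_1 x_3$ and $\alpha_2 x_3$ correspond to linearly independent classes. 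Hence they already span $H^4$, which forces $E_\infty^{3, 1} = E_2^{3, 1}$ and so $d_2^{1, 2} = 0$, completing the proof of degeneration.

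The main obstacle will be the last step: carefully tracking the cup products $\alpha_i\cdot x_3$ through the isomorphism with the Lubin-Tate side to verify that they remain linearly independent in the abutment. This identification is not automatic and requires matching the class $x_3\in H^3_{\cts}(\PGL_2(\Qp), \Qp)$ with the corresponding Morava-stabilizer generator appearing in $\Lambda_{\Qp}(x_1, x_3)$ from Theorem \ref{thm-p-qiso}, as well as matching the Galois-cohomology classes on both sides of the correspondence.
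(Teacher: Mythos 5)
Your outline matches the paper's: you isolate $d_3^{0,2}$ and $d_2^{1,2}$ as the only potentially nonzero differentials, kill $d_3^{0,2}$ via $H^0_{\cts}(\PGL_2(\Qp),\Sp_1(\Qp)^{\ast})=0$ exactly as the paper does, and then aim at $d_2^{1,2}$. For that second step your route is more explicit than the paper's single sentence that, since $E_2^{3,0}$ is stable and $E_2^{3,1}\cong(E_2^{3,0})^2$, the elements of $E_2^{3,1}$ are ``also permanent cycles'' and hence $d_2^{1,2}=0$; being a permanent cycle controls only outgoing differentials (which vanish here for degree reasons anyway) and does not by itself prevent $E_2^{3,1}$ from being hit by the incoming $d_2^{1,2}$. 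What one actually needs is that the classes of $E_2^{3,1}$ survive to $E_\infty^{3,1}$. Your strategy --- realize $E_2^{3,1}$ as cup products of surviving edge classes, lift to the abutment, and compare with $\dim H^4_{\proet}([\cH^1_{\Qp}/\PGL_2(\Qp)],\Qp)=2$ --- is the right way to articulate this, and your dimension count via Lemma~\ref{lem-fib2} and Theorem~\ref{thm-p-qiso} is correct.

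However, the step you flag as ``the main obstacle'' is a genuine gap, not a routine verification. Theorem~\ref{thm-p-qiso} is asserted only as an isomorphism of graded $\Qp$-\emph{vector spaces}, not of graded rings, and the factorization in Lemma~\ref{lem-fib2} is likewise only additive. So the tensor-product description you invoke does not control cup products, and linear independence of $\alpha_1 x_3$ and $\alpha_2 x_3$ in $H^4$ of the abutment is not a formal consequence of the cited results. One would need to show that the isomorphism of Corollary~\ref{cor-iso} and the factorizations respect multiplication, and then match $x_3\in H^3_{\cts}(\PGL_2(\Qp),\Qp)$ with the generator of $H^3_{\cts}(\OO_D^{\times},\Qp)\subseteq\Lambda_{\Qp}(x_1,x_3)$ under the Drinfeld--Lubin-Tate comparison (a Jacquet--Langlands-type compatibility the paper never records), and similarly match the Galois classes $\alpha_i$. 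Equivalently, one must prove that the ring map $H^{\ast}_{\cts}(\cG_{\Qp},\Qp)\otimes H^{\ast}_{\cts}(\PGL_2(\Qp),\Qp)\to H^{\ast}_{\proet}([\cH^1_{\Qp}/\PGL_2(\Qp)],\Qp)$, induced by pullback along the projections to $\Spec\Qp$ and $B\PGL_2(\Qp)$, is injective in degree $4$. This is the crux, and it is not supplied by the proposal; it is fair to add that the paper's own permanent-cycle sentence leaves exactly the same point implicit.
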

\begin{proof}
	By Proposition \ref{prop-p-descent}, only the first three rows of this spectral sequence are nonzero. By Lemma \ref{lem-SLQp}, we have $H^{\ast}_{\cts}(\PGL_2(\Qp), \Qp)\cong \Lambda _{\Qp}(x_3)$, and we also write $H^{i}$ for $H^{i}_{\cts}(\PGL_2(\Qp), \Sp_1(\Qp)^{\ast})$.
	As many terms on the $E_2$-page are zero, almost all $E^{i,j}_{2}$ are already stable  with only two differentials we need to check more carefully. The first one is the map 
	$
	d_3^{0,2}: H^{0}\rightarrow E_3^{3,0}=\Qp.
	$ 
	But by definition of the Steinberg representation $\Sp_1(\Qp)$, there are no stable vectors in $\Sp_1(\Qp)^{\ast}$ under the action of $\PGL_2(\Qp)$, and thus $d^{0,2}_{3}$ is zero. The second map we need to check is 
	$
	d^{1,2}_{2}: H^{1}\rightarrow E^{3,1}_{2}=\Qp^2.
	$ 
	By Proposition \ref{prop-p-descent}, we have 
$
	H^{1}_{\proet}(\cH^{1}_{\Qp},\Qp)\cong \Qp^2
$ 
 	and the term  $E^{3,1}_{2}$ here arises from 
$
	H^{3}_{\cts}(\PGL_2(\Qp),\Qp^2)\cong (H^{3}_{\cts}(\PGL_2(\Qp), \Qp))^{2}.
$
	Now the stability of  $E^{3,0}_{2}$ implies that every  element of  $H^{3}_{\cts}(\PGL_2(\Qp),\Qp)$ is a permanent cycle for the spectral sequence. So the elements in $E^{3,1}_{2}$ are also permanent cycles, and $d^{1,2}_{2}$ is zero. 
\end{proof}

\begin{thm}\label{thm-Steinberg-coh}
	For the dual Steinberg representation $\Sp_1(\Qp)^{\ast}$ of $\GL_2(\Qp)$, we have 
	\[
		H^{\ast}_{\cts}(\PGL_2(\Qp), \Sp_1(\Qp)^{\ast})\cong H^{\ast}_{\cts}(\PGL_2(\Qp), \Qp)[-1]
	\]
	and also
	\[
		H^{\ast}_{\cts}(\GL_2(\Qp),\Sp_1(\Qp)^{\ast})\cong H^{\ast}_{\cts}(\GL_2(\Qp),\Qp)[-1].
	\]
\end{thm}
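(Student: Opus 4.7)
The plan is to split the argument into a $\PGL_2(\Qp)$ part and a $\GL_2(\Qp)$ part, handling the former first and then bootstrapping via the central extension $1\to \Qp^\times\to \GL_2(\Qp)\to \PGL_2(\Qp)\to 1$. Since $\Qp^\times$ sits inside every standard parabolic subgroup, it acts trivially on $\Sp_1(\Qp)^{\ast}$, so this representation is genuinely a $\PGL_2(\Qp)$-module.

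For the $\PGL_2$ statement, I would consider the Hochschild--Serre spectral sequence
\[
E_2^{i,j}=H^i_{\cts}(\PGL_2(\Qp), H^j_{\proet}(\cH^{1}_{\Qp}, \Qp))\Rightarrow H^{i+j}_{\proet}([\cH^{1}_{\Qp}/\PGL_2(\Qp)], \Qp),
\]
which degenerates on $E_2$ by Lemma~\ref{lem-PGL-deg}. By Proposition~\ref{prop-p-descent} (with $n=2$, $d=1$) only the rows $j=0,1,2$ are non-zero, and the $\PGL_2(\Qp)$-action on $H^0=\Qp$ and $H^1=\Qp^2$ is trivial, so Lemma~\ref{lem-SLQp} identifies those rows with copies of $\Lambda_{\Qp}(x_3)$. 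Lemma~\ref{lem-fib2} combined with Theorem~\ref{thm-p-qiso} pins down the abutment: dividing the Poincar\'e polynomial of $H^{\ast}_{\proet}([\cH^{1}_{\Qp}/\GL_2(\Qp)], \Qp)$ by that of $H^{\ast}_{\cts}(\Qp^\times, \Qp)\cong \Lambda_{\Qp}(x,y)$ yields $(1+2t+t^3)(1+t^3)$. A direct antidiagonal dimension count on the degenerate $E_2$-page then forces $H^{\ast}_{\cts}(\PGL_2(\Qp), \Sp_1(\Qp)^{\ast})$ to have Poincar\'e polynomial $t(1+t^3)$, which matches $H^{\ast}_{\cts}(\PGL_2(\Qp), \Qp)[-1]$.

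For the $\GL_2$ statement, I would invoke the Hochschild--Serre spectral sequence of the central extension,
\[
E_2^{p,q}=H^p_{\cts}(\PGL_2(\Qp), \Sp_1(\Qp)^{\ast})\otimes H^q_{\cts}(\Qp^\times, \Qp)\Rightarrow H^{p+q}_{\cts}(\GL_2(\Qp), \Sp_1(\Qp)^{\ast}),
\]
where the $\PGL_2(\Qp)$-action on $H^q_{\cts}(\Qp^\times, \Qp)$ is trivial because $\Qp^\times$ is central. The analogous spectral sequence with trivial coefficients degenerates, since the $E_2$-page $\Lambda_{\Qp}(x_3)\otimes \Lambda_{\Qp}(x,y)\cong \Lambda_{\Qp}(x,y,x_3)$ already matches $H^{\ast}_{\cts}(\GL_2(\Qp), \Qp)$ from Proposition~\ref{prop-GLQp}. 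The transgression differentials in the Hochschild--Serre spectral sequence of a central extension are natural in the coefficient module---they are obtained by cup product with the extension class $\chi\in H^2(\PGL_2(\Qp), \Qp^\times)$ followed by the tautological evaluation pairing---so their vanishing for $\Qp$-coefficients propagates to $\Sp_1(\Qp)^{\ast}$-coefficients. Hence the $\Sp_1(\Qp)^{\ast}$-spectral sequence also degenerates, giving $H^{\ast}_{\cts}(\GL_2(\Qp), \Sp_1(\Qp)^{\ast})\cong H^{\ast}_{\cts}(\PGL_2(\Qp), \Sp_1(\Qp)^{\ast})\otimes H^{\ast}_{\cts}(\Qp^\times, \Qp)$; plugging in the $\PGL_2$ result and using the parallel factorization $H^{\ast}_{\cts}(\GL_2(\Qp), \Qp)\cong H^{\ast}_{\cts}(\PGL_2(\Qp), \Qp)\otimes H^{\ast}_{\cts}(\Qp^\times, \Qp)$ for trivial coefficients completes the argument. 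The chief subtlety is the naturality argument underlying the propagation of degeneration; a more computational alternative would mimic Lemma~\ref{lem-PGL-deg} by analysing each potential $d_r$ directly using the lack of $\GL_2(\Qp)$-stable vectors in $\Sp_1(\Qp)^{\ast}$ together with Theorem~\ref{thm-p-qiso}, but the larger $E_2$-page makes that approach considerably more laborious.
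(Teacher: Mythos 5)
Your treatment of the $\PGL_2(\Qp)$ statement is essentially identical to the paper's: both invoke Lemma~\ref{lem-PGL-deg} for degeneration, feed in Proposition~\ref{prop-p-descent} and Lemma~\ref{lem-SLQp} for the $E_2$-page, use Theorem~\ref{thm-p-qiso} and Lemma~\ref{lem-fib2} to pin the abutment, and read off the dimensions $h^0=h^2=h^3=0$, $h^1=h^4=1$.

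For the $\GL_2(\Qp)$ part you set up the same Hochschild--Serre spectral sequence of the central extension as the paper, but argue degeneration differently. The paper observes that the only possibly non-zero differential is $d_3^{1,2}\colon E_3^{1,2}\to E_3^{4,0}$ (both one-dimensional) and rules it out by noting that $d_3^{1,2}\neq 0$ would force $\wt h^4=0$, which it deems incompatible with the abutment; your route instead appeals to the module structure of the spectral sequence over its trivial-coefficient counterpart. That idea is sound and, once stated precisely, gives a cleaner argument: since $E_2^{p,q}(\Sp_1(\Qp)^\ast)=H^p_{\cts}(\PGL_2(\Qp),\Sp_1(\Qp)^\ast)\otimes H^q_{\cts}(\Qp^\times,\Qp)$ is generated as a module over $E_2^{\ast,\ast}(\Qp)$ by the bottom row $E_2^{\ast,0}$, whose elements are permanent cycles for trivial reasons, and since the trivial-coefficient spectral sequence degenerates at $E_2$ by Proposition~\ref{prop-GLQp}, the Leibniz rule forces every differential on the $\Sp_1(\Qp)^\ast$-page to vanish. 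However, your stated mechanism --- ``the transgression differentials \ldots are obtained by cup product with the extension class'' --- is not the right justification as written: the one differential that actually needs to be killed, namely $d_3^{1,2}$, is not a transgression, and the description of differentials as cup products with the extension class applies only to the $d_2^{\ast,1}$ maps, not to $d_3$. I would recommend replacing that sentence with the explicit multiplicativity argument above; it is exactly the kind of reasoning the paper itself uses in the proof of Lemma~\ref{lem-fib1}, and it makes your alternative route airtight rather than heuristic.
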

\begin{proof}
	Following notations from Lemma \ref{lem-PGL-deg}, we write $H^{i}$ for $H^{i}_{\cts}(\PGL_2(\Qp), \Sp_1(\Qp)^{\ast})$ and also denote the dimension of $H^{i}$ (as a $\Qp$-vector space) by $h^{i}$. Then using Theorem \ref{thm-p-qiso} and Lemma \ref{lem-fib2}, we get an explicit description of the dimension of the $p$-adic pro-\'etale cohomology group of $[\cH^{1}_{\Qp}/\PGL_2(\Qp)]$ in all degrees. The degeneration of the spectral sequence in Lemma \ref{lem-PGL-deg} then allows us to determine the dimensions of $H^{i}$, yielding 
	\[
	h^{0}=0,\quad h^1=1, \quad h^{2}=0, \quad h^{3}=0, \quad h^4=1,
	\] 
	and $h^i=0$ for $i\geq 5$. Comparing this with the description of the cohomology ring of $\PGL_2(\Qp)$ over the trivial representation $\Qp$ given by Lemma \ref{lem-SLQp} gives the claimed shifting. 

	For the case of $\GL_2(\Qp)$, we first denote by $\wt{h}^{i}$ the dimension of the $i$-th continuous cohomology group of $\GL_2(\Qp)$ over $\Sp_1(\Qp)^{\ast}$. 
	Then consider the spectral sequence 
	\[
		E^{i,j}_{2}=H^{i}_{\cts}(\PGL_2(\Qp), H^{j}_{\cts}(\Qp^{\times},\Sp_1(\Qp)^{\ast}))\Rightarrow H^{i+j}_{\cts}(\GL_2(\Qp), \Sp_1(\Qp)^{\ast}).
	\] 
	Since the center of $\GL_2(\Qp)$ acts trivially on  $\Sp_1(\Qp)^{\ast}$ by definition, this spectral sequence can be written as 
	\begin{equation}\label{equation-GL2-deg}
		E^{i,j}_{2}=H^{i}_{\cts}(\PGL_2(\Qp), H^{j}_{\cts}(\Qp^{\times},\Qp)\otimes \Sp_1(\Qp)^{\ast})\Rightarrow H^{i+j}_{\cts}(\GL_2(\Qp), \Sp_1(\Qp)^{\ast}). 
	\end{equation}
	Proposition \ref{prop-GLQp}, together with our computation of the $\PGL_2(\Qp)$ case above, gives all the terms $E^{i,j}_{2}$. Specifically, only the first three entries of the first and the fourth columns of this spectral sequence are nonzero. Therefore, all of the differential maps on the $E_2$-page are zero except possibly
	$
	d^{1,2}_{3}: E^{1,2}_{3}\rightarrow E^{4,0}_{3}.
	$ 
	By direct computation, both  $E^{1,2}_{3}$ and $E^{4,0}_{3}$ are one-dimensional. Thus if $d^{1,2}_{3}$ is nonzero, we would have $E^{4,0}_{4}=E^{4,0}_{\infty}=0$, which further gives $\wt{h}^{4}=0$. But this is impossible as we already have $h^{4}=1$. Therefore, the spectral sequence (\ref{equation-GL2-deg}) degenerates on the $E_2$-page which then gives
	\[
		\wt{h}^{0}=0, \quad \wt{h}^{1}=1,\quad \wt{h}^{2}=2, \quad \wt{h}^{3}=1,\quad \wt{h}^{4}=1,\quad \wt{h}^{5}=2,\quad \wt{h}^{6}=1,
	\] 
	and $\wt{h}^{i}=0$ for $i\geq 7$. Comparing this with Proposition \ref{prop-GLQp} finishes the proof.
\end{proof}

When $n>2$, one cannot deduce descriptions of  $H^{\ast}_{\cts}(\GL_n(\Qp), \Sp_r(\Qp)^{\ast})$ in the same way  as when $n=2$: on one hand there would be  too many $r$ we need to deal with simultaneously; on the other hand, we also do not know whether the spectral sequence (\ref{equation-4}) degenerates on the $E_2$-page. 

\begin{rem}\label{rem-evidence1}
	Nevertheless, Theorem \ref{thm-Steinberg-coh} still offers a plausible indication for the case of general $n$, suggesting that $H^{\ast}_{\cts}(\GL_n(\Qp), \Sp_r(\Qp)^{\ast})$ might be given by a degree-$r$ shift of the cohomology ring  $H^{\ast}_{\cts}(\GL_n(\Qp), \Qp)$. Indeed, under the supposition that  such a "shifting" holds,  if  we further assume (\ref{equation-4}) degenerates on the $E_2$-page, then inserting these cohomology groups into  (\ref{equation-4}) reproduces the cohomology groups of $[\cH^{n-1}_{\Qp}/\GL_n(\Qp)]$ given in Theorem \ref{thm-p-qiso}. 
\end{rem}

\begin{rem}\label{rem-evidence2}
Another reason we find such a speculation of "shifting" reasonable is its similarity to the case of continuous cohomology of $\GL_n(\Qp)$ with respect to duals of generalized Steinberg representations  over $\C$. From  \cite[Proposition~X.4.7]{BW} and taking into consideration the center of $\GL_n(\Qp)$, one could  deduce 
\[
	H^{i}_{\cts}(\GL_n(\Qp), \Sp_r(\C)^{\ast})\cong \begin{cases}
	\C  &  \text{if $i=r,r+1$}\\
	0  &  \text{otherwise.}
	\end{cases}
\] 
In other words, the cohomology ring $H^{\ast}_{\cts}(\GL_n(\Qp),\Sp_r(\C)^{\ast})$ is exactly a degree-$r$ shift of $H^{\ast}_{\cts}(\GL_n(\Qp), \C)$.
\end{rem}

In the following section, we will prove that such a speculation is indeed true, employing the language of condensed math from \cite{CS2019-1} and also  the framework of solid representations recently developed by Rodrigues Jacinto--Rodr\'iguez Camargo in \cite{RJRC2022-1} and  \cite{RJRC2025-1}.

\section{\texorpdfstring{Generalization to $\GL_n(\Qp)$}{Generalization to GL_n(Qp)}}
Our goal of this section is to  generalize  Theorem \ref{thm-Steinberg-coh}  to $\GL_n(\Qp)$. Specifically, we will show:  

\begin{thm}\label{thm-shift}
For $0\leq r\leq n-1$, we have an isomorphism of graded $\Qp$-vector spaces 
\[
	H^{\ast}_{\cts}(\GL_n(\Qp), \Sp_r(\Qp)^{\ast})\cong \Lambda_{\Qp}(x,y,x_3,x_5,\cdots, x_{2n-1})[-r],
\] 
where $\left|x\right|=\left|y\right|=1$ and $\left|x_i\right|=i$ in the exterior algebra on the right-hand side. 
\end{thm}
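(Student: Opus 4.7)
My plan is to adapt the parabolic-resolution techniques of \cite{Orlik2005-1} to the derived category of solid $G$-representations from \cite{RJRC2022-1, RJRC2025-1}, where $G = \GL_n(\Qp)$. In that setting, continuous cohomology is computed as $H^{\ast}_{\cts}(G, V) = \Ext^{\ast}_G(\Qp, V)$, and the solid tensor--Hom adjunction gives
\[
\Ext^{\ast}_{G}(\Qp, \Sp_r(\Qp)^{\ast}) \cong \Ext^{\ast}_{G}(\Sp_r(\Qp), \Qp),
\]
so it suffices to compute the right-hand side.

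The starting point is a resolution of $\Sp_r(\Qp)$, of length $r$, by smooth parabolic inductions $\Ind_{P_J}^G \Qp$ with $J$ running over subsets of $\Delta$ containing $I_r := \{1, \ldots, n-1-r\}$. Such a resolution exists in the smooth-representation category (coming from the face complex of the spherical building restricted to parabolics containing $P_{I_r}$, after \cite{SS1991-1} and \cite{Orlik2005-1}) and lifts to the solid derived category. Applying $R\ul{\Hom}_G(-,\Qp)$ yields a first-quadrant spectral sequence
\[
E_1^{p,q} = \bigoplus_{|J \setminus I_r| = p}\Ext^{q}_G(\Ind_{P_J}^G \Qp,\Qp) \Longrightarrow \Ext^{p+q}_G(\Sp_r(\Qp),\Qp),
\]
whose length-$r$ horizontal extent is what ultimately produces the predicted degree-$r$ shift.

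Next, solid Frobenius reciprocity (the main technical input of \cite{RJRC2025-1}) identifies $\Ext^{\ast}_G(\Ind_{P_J}^G \Qp,\Qp)$ with continuous cohomology of $P_J$. The Hochschild--Serre spectral sequence for $1 \to U_J \to P_J \to L_J \to 1$ then expresses this in terms of continuous cohomology of the Levi $L_J \cong \prod_i \GL_{n_i}(\Qp)$, given explicitly by Proposition \ref{prop-GLQp} and the K\"unneth formula, together with the Lie-algebra cohomology of the unipotent radical $U_J$, controlled by Lazard's theorem \cite{Lazard1965-1}. This yields an explicit description of the $E_1$-page.

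Finally, I would collapse the spectral sequence to the claimed exterior algebra by combining direct analysis of Orlik's differentials with a rigidity check: feeding the conjectural shifted algebra into the Hochschild--Serre spectral sequence \eqref{equation-4} must reproduce $H^{\ast}_{\proet}([\cH^{n-1}_{\Qp}/\GL_n(\Qp)], \Qp)$ as described in Theorem \ref{thm-p-qiso}, and this graded-dimension match forces degeneration and pins down the answer. The main obstacle will be rigorously lifting Orlik's smooth-representation resolution into the solid derived category and ensuring that solid Frobenius reciprocity applies to the duals that appear, since $\Ind_{P_J}^G \Qp$ is infinite-dimensional and the topological/duality behaviour in the solid setting requires genuine care --- this is precisely where the machinery of \cite{RJRC2022-1, RJRC2025-1} is needed.
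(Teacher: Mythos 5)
Your high-level architecture is the same as the paper's (dualize to $\Ext^{\ast}_G(\Sp_r(\Qp),\Qp)$, run the Schneider--Stuhler parabolic-induction resolution of $\Sp_r$, take the associated spectral sequence), but your plan for computing the $E_1$-page contains a real error that would send the computation down the wrong path.

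The crux is your claim that "solid Frobenius reciprocity identifies $\Ext^{\ast}_G(\Ind_{P_J}^G \Qp,\Qp)$ with continuous cohomology of $P_J$." This is false as stated. Frobenius reciprocity for smooth (equivalently, compact) induction from a parabolic $P$ identifies $\Hom_G(V,\Ind_P^G W)$ with $\Hom_P(V|_P,W)$ --- that is, it handles $\Ind$ in the \emph{second} slot of $\Hom$. To move $\Ind_P^G\mathbbm{1}$ out of the \emph{first} slot you must either use the contragredient trick, via $\wt{\Ind^G_P\mathbbm{1}}=\Ind^G_P\delta_P$, or invoke a second-adjunction formula for compact induction, and in either case the modulus character $\delta_P$ appears. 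The result is $\Ext^{\ast}_G(\Ind_{P_J}^G\mathbbm{1},\mathbbm{1})\cong\Ext^{\ast}_{P_J}(\mathbbm{1},\delta_{P_J})$, not $H^{\ast}_{\cts}(P_J,\Qp)$. This twist is not a minor correction: since $\delta_{P_J}$ is a nontrivial character of $P_J$ whenever $J\subsetneq\Delta$, \cite[Proposition~XI.1.9]{BW} implies these $\Ext$-groups vanish identically. That is precisely the content of Lemma~\ref{lem-ind_vanishing} in the paper, and it makes the spectral sequence trivial: every column of $E_1$ vanishes except $i=r$, which contributes $\Ext^{\ast}_G(\mathbbm{1},\mathbbm{1})=H^{\ast}_{\cts}(G,\Qp)$, producing the degree-$r$ shift with no further work. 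Under your (incorrect) identification, the $E_1$-page would be populated with the nonzero groups $H^{\ast}_{\cts}(P_J,\Qp)$, and you would face a genuine differentials problem that the correct computation does not have.

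Your fallback "rigidity check" is also not a proof. The paper explicitly notes that it is unknown whether the spectral sequence~\eqref{equation-4} degenerates; so feeding the conjectural answer into~\eqref{equation-4} and observing a graded-dimension match with Theorem~\ref{thm-p-qiso} does not force anything --- it is at best a consistency check, and cannot substitute for controlling the differentials. In short: the missing ingredient is the contragredient/modulus computation, i.e.\ Lemma~\ref{lem-ind_vanishing}, which kills the $E_1$-page outright; with that in hand the proof is essentially immediate, and without it your plan does not close.
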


For a discrete ring $R$, the cohomology groups $H^{\ast}_{\cts}(\GL_n(\Qp), \Sp_r(R)^{\ast})$ are well-understood if $R$ has characteristic 0 or $R=\Z/m\Z$ for suitably chosen $m$ by the work of \cite{Orlik2005-1} and \cite{Dat}. For such a ring $R$, \cite[Corollary~2]{Orlik2005-1} and \cite[Corollaire~2.1.7]{Dat} give a description of the extension groups 
\begin{equation}\label{eq-ext}
\Ext^{\ast}_{G}(\Sp_I(R), \Sp_J(R))
\end{equation}
for any reductive $p$-adic Lie group $G$ and any subsets $I,J$ of simple roots of $G$. One can then compute $H^{\ast}_{\cts}(\GL_n(\Qp), \Sp_r(R)^{\ast})$ by realizing them as $\Ext^{\ast}_{\GL_n(\Qp)}(\Sp_r(R), \Sp_0(R))$. 

In an attempt to use such a strategy to compute the cohomology of $\GL_n(\Qp)$ over $\Sp_r(\Qp)^{\ast}$, one immediately encounters the following problem: the ring $R$ and also the representations $\Sp_I(R)$ in \cite{Orlik2005-1} and \cite{Dat} are equipped with the discrete topology, and the  $\Ext$-groups in (\ref{eq-ext}) are computed in the abelian category of smooth $G$-representations with the discrete topology; on the other hand,  the representations $\Sp_r(\Qp)$ are equipped with the $p$-adic topology, but  the category of smooth $G$-representations over topological abelian groups is not an abelian category. Thus it no longer makes sense to talk about $\Ext$-groups in such a category. 

To salvage this, we find the theory of solid representations developed in \cite{RJRC2022-1} and \cite{RJRC2025-1} perfect for our purpose as it provides  us some abelian categories which contain representations like  $\Sp_r(\Qp)$. This allows us to study $\Ext$-groups between generalized Steinberg representations of $\GL_n(\Qp)$ over $\Qp$ using strategies analogous to \cite{Orlik2005-1} and \cite{Dat}. Specifically, the key ideas follow closely from those appeared in \cite{Orlik2005-1}.

For the rest of this section, let $G=\GL_n(\Qp)$ and also let $K=\Qp$. Following the notations\footnote{Unlike  in \cite{RJRC2025-1}, we will not use the derived language.} of \cite{RJRC2025-1}, we let $\Rep^{}_{K_{\square}}(G)$ be the category of continuous $G$-representations over solid $K$-vector spaces and let $\Rep^{\infty}_{K_{\square}}(G)$ be its subcategory of smooth $G$-representations. Both of these are abelian categories and thus have enough injectives (upon fixing the cardinal $\kappa$), see e.g. \cite[Remark~2.11]{Bosco2023-1}. The category $\Rep_{K_{\square}}(G)$ also has enough projectives.  We denote by $\Hom_{K_{\square}[G]}$ the Hom-functor of $\Rep_{K_{\square}}(G)$ and we let $\Ext_{K_{\square}[G]}$ denote its derived functors. For $\cV\in \Rep_{\Ksolid}(G)$, we denote its smooth vectors by $\cV^{\infty}$ and its smooth dual $(\cV^{\ast})^{\infty}$ by $\wt{\cV}$.

\begin{lem}
	For any $I\subseteq \Delta $ and parabolic subgroup $P_I\subseteq G$, the representations $\underline{\Ind^{G}_{P_I}\mathbbm{1}}, \underline{\Sp_I(\Qp)}, \underline{\Sp_I(\Qp)^{\ast}}$ are all objects of $\Rep_{\Ksolid}(G)$. 
\end{lem}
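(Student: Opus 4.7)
The plan is to verify, for each of the three representations, two properties: (i) the underlying condensed $\Qp$-vector space is solid, and (ii) the $G$-action refines to a morphism of condensed sets $\ul{G}\times V \to V$, as required for membership in $\Rep_{\Ksolid}(G)$.

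I would begin with the smooth induction $\Ind^{G}_{P_I}\mathbbm{1}$, identifying it with the space $C^{\infty}(G/P_I,\Qp)$ of locally constant $\Qp$-valued functions on $G/P_I$. Since $P_I$ is parabolic, the flag variety $G/P_I$ is compact and totally disconnected, so one has the filtered colimit presentation
\[
	C^{\infty}(G/P_I,\Qp) \;=\; \dirlim_{U} \; C(U\backslash G/P_I,\Qp)
\]
indexed by compact open subgroups $U\subseteq G$. Each double coset space $U\backslash G/P_I$ is finite by compactness, so every term is a finite-dimensional $\Qp$-vector space and hence trivially solid. Filtered colimits preserve solidity in $\Mod^{\solid}_{\Qp}$, so the condensed upgrade is solid. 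Continuity of the right translation action of $G$ in the condensed sense then reduces, via the colimit, to the evident continuity of $G$ acting through its finite quotients on each finite-dimensional piece.

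The remaining two cases are formal. For $\Sp_I(\Qp)$ I would invoke its defining quotient presentation
\[
	\bigoplus_{i\in \Delta\setminus I} \Ind^{G}_{P_{I\cup\{i\}}}\mathbbm{1} \;\longrightarrow\; \Ind^{G}_{P_I}\mathbbm{1} \;\longrightarrow\; \Sp_I(\Qp) \;\longrightarrow\; 0,
\]
whose first two terms lie in $\Rep_{\Ksolid}(G)$ by the previous paragraph; since this category is abelian, $\Sp_I(\Qp)$ belongs to it as well. For $\Sp_I(\Qp)^{\ast}$, I would identify its condensed realization with the internal Hom $\ul{\Hom}_{\Ksolid}(\ul{\Sp_I(\Qp)},\ul{\Qp})$ equipped with the contragredient $G$-action, which automatically lands in $\Rep_{\Ksolid}(G)$.

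Given that the verifications are largely formal, I do not anticipate a serious obstacle. The only genuinely non-trivial content is checking the condensed continuity of the $G$-action on $C^{\infty}(G/P_I,\Qp)$; this is essentially a translation of smoothness (i.e., the openness of pointwise stabilizers of vectors) into the condensed setting, and is already implicit in the colimit presentation above. Once established for the induction, the statements for $\Sp_I(\Qp)$ and its dual follow by functoriality of cokernels and internal Homs in $\Rep_{\Ksolid}(G)$.
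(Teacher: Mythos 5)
Your argument is correct in outline but takes a genuinely different route from the paper's, which is a one-liner: all three representations are complete locally convex $\Qp$-vector spaces carrying a continuous $G$-action, and $\cite[\text{Proposition 3.7}]{RJRC2022-1}$ says precisely that condensation of such a representation lands in $\Rep_{\Ksolid}(G)$. That citation uniformly absorbs all the technical content.

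Your approach reconstructs the conclusion by hand and is instructive, but it quietly relies on several commutation statements about the condensation functor that are exactly what the cited proposition packages up. Concretely: (i) you need $\underline{\dirlim_U C(U\backslash G/P_I,\Qp)} \cong \dirlim_U \underline{C(U\backslash G/P_I,\Qp)}$, which holds because a cofinal countable system of compact opens makes $C^\infty(G/P_I,\Qp)$ an LB-space, and for such spaces condensation commutes with the colimit (cf.\ \cite[Appendix A]{Bosco2023-1}); (ii) for the quotient presentation of $\Sp_I(\Qp)$ you need condensation to take the strict topological quotient to the cokernel in $\Rep_{\Ksolid}(G)$, which again holds for LF-spaces but is not automatic for arbitrary topological quotients; and (iii) the identification $\underline{\Sp_I(\Qp)^{\ast}} \cong \ul{\Hom}_{\Ksolid}(\underline{\Sp_I(\Qp)},\underline{\Qp})$ requires the statement that internal Hom in $\Mod^{\solid}_{K}$ recovers the strong topological dual for LF/Fréchet spaces. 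None of these is a serious gap --- they are all standard facts in the condensed toolkit --- but they are precisely the reason the paper prefers the blanket ``complete locally convex $\Rightarrow$ solid $G$-representation'' citation rather than re-deriving them. Your approach buys a concrete, self-contained picture of why the induction is solid (and in particular makes the role of smoothness transparent); the paper's approach buys brevity and uniformity, and applies without modification to the non-smooth $\Sp_I(\Qp)^{\ast}$ without invoking the internal Hom identification separately.
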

\begin{proof}
	The representations $\Ind^{G}_{P_I}\mathbbm{1}, \Sp_I(\Qp), \Sp_I(\Qp)^{\ast}$ are all complete locally convex $G$-representations over $\Qp$.  The lemma then follows from  \cite[Proposition~3.7]{RJRC2022-1}.  
\end{proof}

\begin{lem}\label{lem-inj}
The functor 
\begin{align*}
	(-)^{\infty}: \Rep_{K_{\square}}(G)&\rightarrow \Rep^{\infty}_{K_{\square}}(G)\\
	\cV&\mapsto \cV^{\infty}:=\dirlim_{\substack{H\leq G\\ \text{compact open}}} \cV^{H}
\end{align*}
preserves injective resolutions. 
\end{lem}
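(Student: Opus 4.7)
The plan is to realize $(-)^{\infty}$ as the right adjoint of the natural inclusion $\iota: \Rep^{\infty}_{K_{\square}}(G) \hookrightarrow \Rep_{K_{\square}}(G)$, and then invoke the standard homological-algebra principle that the right adjoint of an exact functor preserves injective objects.

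First I would establish the adjunction $\iota \dashv (-)^{\infty}$. Given $\mathcal{W} \in \Rep^{\infty}_{K_{\square}}(G)$ and $\mathcal{V} \in \Rep_{K_{\square}}(G)$, any $G$-equivariant morphism $\phi: \iota(\mathcal{W}) \to \mathcal{V}$ must carry $\mathcal{W}^{H}$ into $\mathcal{V}^{H}$ for every compact open subgroup $H \leq G$; passing to the filtered colimit and using $\mathcal{W} = \varinjlim_{H} \mathcal{W}^{H}$, the image of $\phi$ lands inside $\varinjlim_{H} \mathcal{V}^{H} = \mathcal{V}^{\infty}$. This yields the natural isomorphism
\[
    \Hom_{K_{\square}[G]}(\iota(\mathcal{W}), \mathcal{V}) \cong \Hom_{K_{\square}[G]}(\mathcal{W}, \mathcal{V}^{\infty}).
\]

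Next I would verify that $\iota$ is exact. Since $\iota$ is fully faithful, it suffices to show that $\Rep^{\infty}_{K_{\square}}(G)$ is closed under kernels and cokernels computed in $\Rep_{K_{\square}}(G)$. For a morphism of smooth representations $\phi: \mathcal{W}_1 \to \mathcal{W}_2$, the kernel inherits smoothness because $H$-invariants commute with subobjects, while the cokernel inherits smoothness because the quotient of the filtered colimit $\varinjlim_H \mathcal{W}_2^{H}$ by the image of a subrepresentation remains a filtered colimit of $H$-invariants; both facts ultimately rest on the compatibility of $(-)^{H}$ with the relevant limits and colimits in the solid framework of \cite{RJRC2025-1}.

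Combining these ingredients, for any injective $\mathcal{I} \in \Rep_{K_{\square}}(G)$ the adjunction gives a natural isomorphism
\[
    \Hom_{K_{\square}[G]}(-, \mathcal{I}^{\infty}) \cong \Hom_{K_{\square}[G]}(\iota(-), \mathcal{I})
\]
of functors on $\Rep^{\infty}_{K_{\square}}(G)$. The right-hand side is the composite of the exact functor $\iota$ with the exact functor $\Hom_{K_{\square}[G]}(-, \mathcal{I})$, and is therefore exact, so $\mathcal{I}^{\infty}$ is injective in $\Rep^{\infty}_{K_{\square}}(G)$. Applying this termwise to an injective resolution $\mathcal{V} \to \mathcal{I}^{\bullet}$ in $\Rep_{K_{\square}}(G)$ produces a complex of injectives in $\Rep^{\infty}_{K_{\square}}(G)$; the exactness of the resulting complex follows from the left exactness of $(-)^{\infty}$ as a right adjoint together with the acyclicity of the injective terms $\mathcal{I}^{i}$ for its derived functor. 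The step I anticipate to require the most care is the closure of $\Rep^{\infty}_{K_{\square}}(G)$ under cokernels in the exactness check for $\iota$: cokernels in the solid category are not computed pointwise on sets, so one must genuinely exploit the compatibility of $(-)^{H}$ with colimits in $\Rep_{K_{\square}}(G)$ rather than the elementary arguments familiar from the classical discrete-topology setting.
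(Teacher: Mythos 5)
Your approach via the adjunction $\iota \dashv (-)^{\infty}$ is a genuinely different route from the paper's, and the pieces you assemble correctly establish that $(-)^{\infty}$ sends injective objects to injective objects: the inclusion $\iota$ is an exact fully faithful left adjoint, so the standard argument with $\Hom_{K_{\square}[G]}(-,\mathcal{I}^{\infty})\cong\Hom_{K_{\square}[G]}(\iota(-),\mathcal{I})$ goes through. That is a useful and clean way to get injectivity of the terms.

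However, the final step of your argument has a genuine gap. To conclude that an injective resolution $0\to\mathcal{V}\to\mathcal{I}^{\bullet}$ maps to an injective \emph{resolution} $0\to\mathcal{V}^{\infty}\to(\mathcal{I}^{\bullet})^{\infty}$, you need the complex $(\mathcal{I}^{\bullet})^{\infty}$ to be exact in positive degrees, which amounts to $R^{i}(-)^{\infty}(\mathcal{V})=0$ for all $i\geq 1$. You claim this follows from ``left exactness of $(-)^{\infty}$ together with the acyclicity of the injective terms $\mathcal{I}^{i}$,'' but acyclicity of injectives is automatic for any derived functor and tells you nothing about acyclicity of $\mathcal{V}$ itself. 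The standard counterexample is $\Hom_{\Z}(\Q,-)$: it has the exact left adjoint $\Q\otimes_{\Z}-$, hence is left exact and preserves injectives, yet it is not exact, and applying it to an injective resolution does not in general yield a resolution. So left exactness plus preservation of injectives is strictly weaker than what the lemma asserts.

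What closes the gap is the \emph{exactness} of $(-)^{\infty}$, not just its left exactness — and this is exactly what the paper establishes and you omit. The paper observes that for a compact open $H\leq G$ and $K=\Qp$ (characteristic $0$), the functor $(-)^{H}$ is exact because one can average over Haar measure, so $\mathcal{V}^{H}$ is a direct summand of $\mathcal{V}$; then $(-)^{\infty}=\varinjlim_{H}(-)^{H}$ is a filtered colimit of exact functors, hence exact. Once $(-)^{\infty}$ is known to be exact and to preserve injectives, preservation of injective resolutions follows at once. If you add this exactness argument, your adjunction-based treatment becomes a complete alternative proof; as written, it omits the key step.
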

\begin{proof}
	For $\cV\in \Rep_{K_{\square}}(G)$ and $H\leq G$ compact open, its clear that the functor $\cV\mapsto \cV^{H}$ is exact. As $\Rep_{K_{\square}}(G)$ is abelian, filtered colimits of exact functors remain exact, which implies exactness of $(-)^{\infty}$. As both categories are abelian and have enough injectives, we deduce from  \cite[\href{https://stacks.math.columbia.edu/tag/015Z}{Tag 015Z}]{stacks-project} that  $(-)^{\infty}$ preserves injective resolutions.
\end{proof}

\begin{prop}\label{prop-to_smooth}
	For $\cU\in \Rep^{\infty}_{K_{\square}}(G)$ and $\cV\in \Rep_{K_{\square}}(G)$, we have $\Ext^{i}_{K_{\square}[G]}(\cU,\cV)\cong \Ext^{i}_{K_{\square}[G]}(\cU,\cV^{\infty})$ for any $i\geq 0$. Specifically, if $V$ is a complete locally convex representation  of $G$ over $K$, then $H^{i}_{\cts}(G, V)\cong H^{i}_{\cts}(G, V^{\infty})$ for any $i\geq 0$.
\end{prop}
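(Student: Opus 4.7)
The plan is to exploit the adjunction between the inclusion $\iota: \Rep^{\infty}_{K_{\square}}(G) \hookrightarrow \Rep_{K_{\square}}(G)$ and the smooth vectors functor $(-)^{\infty}$, then derive it using Lemma~\ref{lem-inj}. First I would record the underived adjunction: for any $\cU \in \Rep^{\infty}_{K_{\square}}(G)$ and any $\cW \in \Rep_{K_{\square}}(G)$, every $G$-equivariant morphism $\cU \to \cW$ factors uniquely through $\cW^{\infty}$, since every vector of $\cU$ is fixed by some compact open subgroup. This yields a natural isomorphism
\[
	\Hom_{K_{\square}[G]}(\cU, \cW) = \Hom_{\Rep^{\infty}_{K_{\square}}(G)}(\cU, \cW^{\infty}),
\]
so $(-)^{\infty}$ is right adjoint to the exact fully faithful inclusion $\iota$.

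The main computation is to derive this adjunction. I would choose an injective resolution $\cV \to I^{\bullet}$ in $\Rep_{K_{\square}}(G)$. By Lemma~\ref{lem-inj}, $\cV^{\infty} \to (I^{\bullet})^{\infty}$ is an injective resolution in $\Rep^{\infty}_{K_{\square}}(G)$. Applying the underived adjunction in each degree produces an isomorphism of cochain complexes
\[
	\Hom_{K_{\square}[G]}(\cU, I^{\bullet}) \cong \Hom_{\Rep^{\infty}_{K_{\square}}(G)}(\cU, (I^{\bullet})^{\infty}),
\]
whose cohomology computes $\Ext^i_{K_{\square}[G]}(\cU, \cV)$ on the left and $\Ext^i_{\Rep^{\infty}_{K_{\square}}(G)}(\cU, \cV^{\infty})$ on the right.

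To conclude, I would run exactly the same argument with $\cV$ replaced by $\iota(\cV^{\infty})$. Using the tautology $(\cV^{\infty})^{\infty} = \cV^{\infty}$, this gives $\Ext^i_{K_{\square}[G]}(\cU, \cV^{\infty}) \cong \Ext^i_{\Rep^{\infty}_{K_{\square}}(G)}(\cU, \cV^{\infty})$, and combining the two identifications yields the desired isomorphism $\Ext^i_{K_{\square}[G]}(\cU, \cV) \cong \Ext^i_{K_{\square}[G]}(\cU, \cV^{\infty})$. The specific statement about continuous cohomology then follows by taking $\cU = \mathbbm{1}$, the trivial smooth $G$-representation over $K$, and invoking the standard identification $H^{i}_{\cts}(G, V) = \Ext^i_{K_{\square}[G]}(\mathbbm{1}, \underline{V})$ in the condensed framework recalled in Section~\ref{subsection-3.2}; the completeness and local convexity of $V$ ensure that $\underline{V}$ lies in $\Rep_{K_{\square}}(G)$.

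The substantive content has already been absorbed into Lemma~\ref{lem-inj}, so no step here is genuinely hard; everything reduces to a formal adjunction argument. The one point worth being careful about is the potential ambiguity in $\Ext^i_{K_{\square}[G]}(\cU, \cV^{\infty})$—whether one interprets it as derived $\Hom$ in the big or in the smooth category—which is precisely what is settled by applying the injective-resolution argument twice, once to $\cV$ and once to $\cV^{\infty}$.
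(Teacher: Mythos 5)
Your proof is correct and follows the same strategy as the paper: use the adjunction between the inclusion $\iota\co \Rep^{\infty}_{\Ksolid}(G)\hookrightarrow \Rep_{\Ksolid}(G)$ and $(-)^{\infty}$, choose an injective resolution $\cV\to\cI^{\bullet}$ in the big category, and invoke Lemma~\ref{lem-inj} to identify $(\cI^{\bullet})^{\infty}$ as an injective resolution of $\cV^{\infty}$ in the smooth category. The one place you are actually more careful than the paper is the final step: after observing that $\Hom_{\Ksolid[G]}(\cU,\cI^{\bullet})\cong\Hom_{\Ksolid[G]}(\cU,(\cI^{\bullet})^{\infty})$, the cohomology of the right-hand side \emph{a priori} computes $\Ext^{i}$ in the smooth category $\Rep^{\infty}_{\Ksolid}(G)$, not $\Ext^{i}_{\Ksolid[G]}(\cU,\cV^{\infty})$ in the big category, because $(\cI^{\bullet})^{\infty}$ need not be an injective resolution in $\Rep_{\Ksolid}(G)$. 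The paper states ``the first assertion follows'' without addressing this; you correctly close the gap by running the same argument a second time with $\cV$ replaced by $\cV^{\infty}$ and using $(\cV^{\infty})^{\infty}=\cV^{\infty}$ to show that both sides agree with $\Ext^{i}_{\Rep^{\infty}_{\Ksolid}(G)}(\cU,\cV^{\infty})$. Your reduction of the continuous-cohomology statement to $\cU=\mathbbm{1}$ matches the paper's use of \cite[Lemma~5.2]{RJRC2022-1}, so everything checks out.
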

\begin{proof}
	Take an injective resolution $0\rightarrow \cV\rightarrow \cI^{\bullet}$ of $\cV$ in $\Rep_{\Ksolid}(G)$ and also consider the injective resolution $0\rightarrow \cV^{\infty}\rightarrow (\cI^{\bullet})^{\infty}$ of $\cV^{\infty}$ in $\Rep^{\infty}_{\Ksolid}(G)$ given by Lemma \ref{lem-inj}. As $\cU$ is smooth, we have $f(\cU)\subseteq \cW^{\infty}$ for any continuous $G$-homomorphism $f: \cU\rightarrow \cW$. This implies $\Hom_{\Ksolid[G]}(\cU, \cI^{\bullet})\cong  \Hom_{\Ksolid[G]}(\cU, (\cI^{\bullet})^{\infty})$ and the first assertion follows. For $V$ a complete locally convex $G$-representation, $V^{\infty}$ is also complete and locally convex.  
The isomorphism $H^{i}_{\cts}(G, V)\cong H^{i}_{\cts}(G, V^{\infty})$ then follows from \cite[Lemma~5.2]{RJRC2022-1} and the first statement.
\end{proof}

\begin{lem}\label{lem-swap}
	Given  $\cU, \cV\in \Rep^{\infty}_{K_{\square}}(G)$, if $\cV$ is further admissible, then
	\[
		\Ext^{i}_{K_{\square}[G]}(\cU,\cV)\cong \Ext^{i}_{K_{\square}[G]}(\wt{\cV}, \wt{\cU})
	\] 
	for any $i\geq 0$. 
\end{lem}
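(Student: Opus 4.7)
The plan is to bootstrap the desired $\Ext$-isomorphism from its $i=0$ counterpart by means of a suitable resolution, with smooth biduality for admissible representations as the key input.

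First I would establish the degree-zero case, namely a natural isomorphism
\[
\Phi \colon \Hom_{K_{\square}[G]}(\cU,\cV)\xrightarrow{\sim} \Hom_{K_{\square}[G]}(\wt{\cV},\wt{\cU}),\qquad f\mapsto \wt{f},
\]
where $\wt{f}$ is the restriction to smooth vectors of the $K_{\square}$-linear dual $f^{\ast}\colon \cV^{\ast}\to \cU^{\ast}$ (it is $G$-equivariant, hence preserves smooth vectors). The construction of an inverse uses the smooth biduality $\wt{\wt{\cV}}\cong \cV$, which holds because $\cV$ is admissible: given $g\colon \wt{\cV}\to \wt{\cU}$, one dualizes and takes smooth vectors to obtain a map $\wt{\wt{\cU}}\to \wt{\wt{\cV}}\cong \cV$, and precomposes with the canonical map $\cU\to \wt{\wt{\cU}}$. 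Verifying that these constructions are mutually inverse is formal and uses only admissibility of $\cV$.

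To pass to higher $\Ext$'s, I would choose an injective resolution $\cV\to \cI^{\bullet}$ of $\cV$ inside the full subcategory of admissible objects of $\Rep^{\infty}_{K_{\square}}(G)$, i.e.\ one with each $\cI^j$ admissible. Existence of such resolutions is the analogue of the classical fact that admissible smooth representations admit admissible injective hulls, which should transport to the solid setting via the constructions of \cite{RJRC2022-1, RJRC2025-1}. By Proposition \ref{prop-to_smooth}, the complex $\Hom_{K_{\square}[G]}(\cU,\cI^{\bullet})$ computes $\Ext^{\ast}_{K_{\square}[G]}(\cU,\cV)$. Applying $\Phi$ term-by-term yields a natural isomorphism of complexes
\[
\Hom_{K_{\square}[G]}(\cU,\cI^{\bullet})\;\cong\;\Hom_{K_{\square}[G]}(\wt{\cI^{\bullet}},\wt{\cU}).
\]

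Since the smooth dual functor is exact on admissible smooth representations (again via biduality), $\wt{\cI^{\bullet}}\to \wt{\cV}$ is an exact resolution of $\wt{\cV}$, and its cohomology under $\Hom_{K_{\square}[G]}(-,\wt{\cU})$ computes $\Ext^{\ast}_{K_{\square}[G]}(\wt{\cV},\wt{\cU})$ provided each $\wt{\cI^{j}}$ is projective in $\Rep_{K_{\square}}(G)$, or at least $\Hom(-,\wt{\cU})$-acyclic. Establishing this acyclicity of smooth duals of admissible injectives is the main obstacle: classically it reflects the duality exchanging admissible injectives with compactly induced admissibles, and in the solid setting it should follow either from an explicit description of admissible injectives via parabolic induction, or more abstractly from the fact that smooth duality induces an anti-equivalence on the bounded derived category of admissible smooth representations, which immediately yields the claimed $\Ext$-isomorphism whenever both $\cU$ and $\cV$ are admissible—the only case actually needed for the applications to Steinberg representations.
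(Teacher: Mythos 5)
Your proposal takes a genuinely different route from the paper, and the route you propose has a real gap. The paper (following \cite[Lemma~6]{Orlik2005-1}) resolves the \emph{arbitrary} smooth object $\cU$ \emph{projectively}, $\cP^{\bullet}\to\cU$, observes that the smooth dual $\wt{\cP^{\bullet}}$ is an injective resolution of $\wt{\cU}$ (smooth duals of projectives are injective), and then applies the duality adjunction $\Hom_{\Ksolid[G]}(\cP^{i},\wt{\cV})\cong\Hom_{\Ksolid[G]}(\cV,\wt{\cP^{i}})$ from \cite[Proposition~I.4.13]{Vig1} degree by degree. This yields $\Ext^{i}(\cU,\wt{\cV})\cong\Ext^{i}(\cV,\wt{\cU})$ for \emph{any} smooth $\cV$, and admissibility of $\cV$ enters only at the very end via biduality $\wt{\wt{\cV}}\cong\cV$. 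You instead resolve $\cV$ injectively by admissibles and apply the degree-zero isomorphism termwise.

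The gap is in the very first step of your higher-degree argument: the claim that an admissible $\cV$ admits an injective resolution $\cV\to\cI^{\bullet}$ in $\Rep^{\infty}_{\Ksolid}(G)$ with each $\cI^{j}$ admissible. You call this ``the analogue of the classical fact that admissible smooth representations admit admissible injective hulls,'' but no such classical fact exists: injective objects in the category of smooth representations are typically far from admissible, and already for $\cV=\mathbbm{1}$ (which is the case used in the paper) there is no reason an admissible injective resolution should exist. You also flag the second difficulty yourself — that $\wt{\cI^{j}}$ must be $\Hom(-,\wt{\cU})$-acyclic — and your proposed fixes (explicit descriptions of admissible injectives, or an anti-equivalence of bounded derived categories of admissibles) either rest on the same unestablished premise or prove a weaker statement in which $\cU$ is also assumed admissible. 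The paper's arrangement is designed precisely to sidestep both problems: projective resolutions of $\cU$ exist unconditionally, and ``smooth duals of projectives are injective'' is forced by the adjunction itself (exactness of $\Hom(\cP,\wt{(-)})$), so no special structure of $\cV$ is needed until the final biduality step. I would recommend reorienting the argument to resolve $\cU$ projectively rather than $\cV$ injectively.
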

\begin{proof}
	The classical case of such an isomorphism appeared in \cite[Lemma~6]{Orlik2005-1}, and we follow the same strategy here. As $\Rep_{\Ksolid}(G)$ is an abelian category with enough injectives and projectives, we can take a projective resolution $0\leftarrow \cU\leftarrow \cP^{\bullet}$ in $\Rep_{\Ksolid}(G)$ and its smooth dual gives an injective resolution of $\wt{\cU}$. By \cite[Proposition~I.4.13]{Vig1}, we have $\Hom_{\Ksolid[G]}(\cP^{i}, \wt{\cV})\cong \Hom_{\Ksolid[G]}(\cV, \wt{\cP^{i}})$, which implies $\Ext^{i}_{\Ksolid[G]}(\cU, \wt{\cV})\cong \Ext^{i}_{\Ksolid[G]}(\cV, \wt{\cU})$. As $\cV$ is admissible, we have  $\wt{\wt{\cV}}\cong \cV$ by \cite[Proposition~2.1.10]{Casselman1} and the claim follows.
\end{proof}

\begin{lem}\label{lem-ind_vanishing}
	For any parabolic subgroup $P_I\subseteq G$ associated to $I\subsetneq \Delta $, we have $\Ext^{\ast}_{K_{\square}[G]}(\underline{\Ind^{G}_{P_I}\mathbbm{1}}, \underline{\mathbbm{1}})=0$.
\end{lem}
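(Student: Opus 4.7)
The plan is to invoke Bernstein's second adjunction (in a solid-analytic form) to trade the $\Ext$ computation on $G$ for a continuous group cohomology computation on the Levi $M_I$, and then to kill the latter using the non-triviality of the modular character.

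First, I would establish a solid version of Bernstein's second adjunction for parabolic induction. Since $G/P_I$ is compact, smooth induction coincides with compact induction, and the adjunction takes the shape
\[
  \Ext^{*}_{K_{\square}[G]}(\underline{\Ind^{G}_{P_I}\mathbbm{1}}, \underline{\mathbbm{1}}) \;\cong\; H^{*}_{\cts}(M_I, \chi),
\]
where $M_I = P_I/N_I$ is the Levi subgroup and $\chi$ is a (continuous) character of $M_I$ whose restriction to the centre $Z(M_I)$ is a non-trivial power of the modular character $\delta_{P_I}$; non-triviality is exactly the condition $I \subsetneq \Delta$ (equivalently, $P_I$ is not unimodular in $G$). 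This adjunction can be set up either by invoking the solid representation formalism of \cite{RJRC2022-1, RJRC2025-1}, or by comparing to the classical Bernstein adjunction for discrete smooth representations and transferring across via Proposition \ref{prop-to_smooth}.

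Second, I would exploit that $\chi$ is valued in the abelian group $K^{\times}$, so is trivial on the derived subgroup $M_I^{\mathrm{der}}$. The Hochschild--Serre spectral sequence for $M_I^{\mathrm{der}} \triangleleft M_I$ yields
\[
  E_{2}^{p,q} = H^{p}_{\cts}\!\bigl(M_I^{\mathrm{ab}},\, H^{q}_{\cts}(M_I^{\mathrm{der}}, K) \otimes \chi\bigr) \;\Rightarrow\; H^{p+q}_{\cts}(M_I, \chi),
\]
where $M_I^{\mathrm{ab}} = M_I/M_I^{\mathrm{der}}$ is (up to finite groups) the centre $Z(M_I)$, and it acts trivially on the inner cohomology $H^{q}_{\cts}(M_I^{\mathrm{der}}, K)$ since inner conjugation is trivial on group cohomology (cf.\ \cite[Proposition~3.8.1]{BSSW-1}).

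Third, I would kill each $E_{2}^{p,q}$ by showing $H^{*}_{\cts}(M_I^{\mathrm{ab}}, K_{\chi}) = 0$ when $\chi$ is non-trivial. Since $M_I^{\mathrm{ab}}$ is, up to a finite group, a split torus $(K^{\times})^{s}$, the Künneth formula reduces this to proving $H^{*}_{\cts}(K^{\times}, K_\chi) = 0$ for every non-trivial continuous character $\chi$. Decomposing $K^{\times} \cong \pi^{\Z} \times \OO_K^{\times}$, one handles the two factors separately: on $\pi^{\Z} \cong \Z$, non-triviality of $\chi$ means multiplication by $\chi(\pi) - 1$ is invertible, giving acyclicity; on $\OO_K^{\times}$, the Casselman--Wigner comparison identifies continuous cohomology with abelian Lie algebra cohomology, which is the Koszul complex for multiplication by a non-zero element of $\mathfrak{g}^{*}$ and hence acyclic.

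The main obstacle will be step~1: rigorously establishing Bernstein's second adjunction in the solid setting with the correct modular twist. Steps~2 and 3 then proceed by standard Hochschild--Serre arguments together with the easy vanishing of continuous cohomology of locally compact abelian groups with non-trivial character coefficients.
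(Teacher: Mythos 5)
Your route is genuinely different from the paper's, and it works in outline, but it is considerably heavier and has a few rough edges worth flagging. The paper does not invoke Bernstein's second adjunction at all. Instead it first \emph{dualizes}: using Lemma~\ref{lem-swap} and the computation of the smooth dual $\widetilde{\Ind^{G}_{P_I}\mathbbm{1}} = \Ind^{G}_{P_I}\delta_{P_I}$ from \cite[Chapitre I, 5.11]{Vig1}, it rewrites
\[
  \Ext^{\ast}_{K_{\square}[G]}\bigl(\underline{\Ind^{G}_{P_I}\mathbbm{1}}, \underline{\mathbbm{1}}\bigr)
  \cong \Ext^{\ast}_{K_{\square}[G]}\bigl(\underline{\mathbbm{1}}, \underline{\Ind^{G}_{P_I}\delta_{P_I}}\bigr),
\]
and then applies Shapiro's lemma (i.e.\ the \emph{first} adjunction, which is elementary since $G/P_I$ is compact so parabolic induction is an exact right adjoint of restriction) to land on $\Ext^{\ast}_{K_{\square}[P_I]}(\underline{\mathbbm{1}}, \underline{\delta_{P_I}})$, whose vanishing for $\delta_{P_I}\ne\mathbbm{1}$ is then quoted from \cite[Proposition~XI.1.9]{BW}. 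Unwinding the classical proof of second adjointness, your step~1 essentially repackages the same ``dualize, then use Frobenius reciprocity'' manoeuvre; but as a black box, second adjointness in the solid category is a substantially deeper input than Shapiro's lemma and the duality lemma already available in the paper, and you are right to identify it as the main obstacle. If you prefer not to rely on~\cite{BW}, your steps~2--3 give a clean self-contained substitute for the final vanishing, which is a real gain.

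Two smaller points. First, in step~2 the phrase ``inner conjugation is trivial on group cohomology'' does not by itself justify that $M_I^{\mathrm{ab}}$ acts trivially on $H^{q}_{\cts}(M_I^{\mathrm{der}},K)$: conjugation by a lift $m\in M_I$ is an automorphism of $M_I^{\mathrm{der}}$ that is generally \emph{outer}, and the standard ``inner automorphisms act trivially'' fact only applies to elements of $M_I^{\mathrm{der}}$ itself. The triviality does hold here, but for a Lie-theoretic reason (the adjoint action of $\gl_{n}$ on $H^{\ast}_{\Lie}(\sl_{n},\Qp)$ is trivial), which is what \cite[Proposition~3.8.1]{BSSW-1} actually packages. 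Second, the character $\delta_{P_I}$ is unramified, so its restriction to the compact part $\OO_K^{\times}$ of each $\Qp^{\times}$ factor of $M_I^{\mathrm{ab}}$ is in fact trivial; only the $\pi^{\Z}$ factors see a non-trivial character, and it is there that the vanishing actually happens. Your Koszul/Casselman--Wigner argument for the compact factor is therefore not used (and as stated it would need the extra hypothesis $d\chi\ne 0$); the K\"unneth formula still gives the conclusion because one tensor factor vanishes, but the prose should be adjusted so as not to suggest both factors die.
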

\begin{proof}
	Let $\delta _{P_I}$ denote the modulus character of the parabolic subgroup $P_I$. Then by \cite[Chapitre I,5.11]{Vig1} we have $\wt{\Ind^{G}_{P_I}\mathbbm{1}}=\Ind^{G}_{P_I}\delta _{P_I}$. Thus we get 
	\begin{align*}
		\Ext^{\ast}_{\Ksolid[G]}(\underline{\Ind^{G}_{P_I}\mathbbm{1}}, \underline{\mathbbm{1}}) &\cong \Ext^{\ast}_{\Ksolid[G]}(\underline{\mathbbm{1}}, \underline{\wt{\Ind^{G}_{P_I}\mathbbm{1}}})\\
		&\cong \Ext^{\ast}_{\Ksolid[G]}(\underline{\mathbbm{1}}, \underline{\Ind^{G}_{P_I}\delta _{P_I}})\\
		&\cong \Ext^{\ast}_{\Ksolid[P_I]}(\underline{\mathbbm{1}}, \underline{\delta _{P_I}})
	\end{align*}
	where we used Lemma \ref{lem-swap} in the first isomorphism and the third isomorphism comes from Shapiro's lemma. Since $I$ is a proper subset of $\Delta $, the modulus character $\delta _{P_I}$ is not the trivial character and \cite[Proposition~XI.1.9]{BW} implies there does not exist any  extension of $\mathbbm{1}$ by $\delta _{P_I}$. This gives $\Ext^{\ast}_{\Ksolid[G]}(\underline{\mathbbm{1}}, \underline{\delta _{P_I}})=0$, as desired. 
\end{proof}

\begin{proof}[Proof of Theorem \ref{thm-shift}]
	As the representations $\Sp_r(\Qp)^{\ast}$ are complete and locally convex, Proposition \ref{prop-to_smooth} and Lemma \ref{lem-swap} together give $H^{\ast}_{\cts}(\GL_n(\Qp), \Sp_r(\Qp)^{\ast})\cong \Ext^{\ast}_{\Ksolid[G]}(\underline{\Sp_r(\Qp)}, \underline{\mathbbm{1}})$. Since  $\Ext^{\ast}_{\Ksolid[G]}(\underline{\mathbbm{1}}, \underline{\mathbbm{1}})$ is just $H^{\ast}_{\cts}(\GL_n(\Qp),\Qp)$ which is isomorphic to $\Lambda _{\Qp}(x,y,x_3,x_5,\cdots,x_{2n-1})$ by Proposition \ref{prop-GLQp}, it suffices to show $\Ext^{\ast}_{\Ksolid[G]}(\underline{\Sp_r(\Qp)}, \underline{\mathbbm{1}})\cong \Ext^{\ast}_{\Ksolid[G]}(\underline{\mathbbm{1}},\underline{\mathbbm{1}})[-r]$. 

	Now recall that $\Sp_r(\Qp)$ is defined as the generalized Steinberg representation of $\GL_n(\Qp)$ associated to the subset of simple roots $\{1,\cdots,n-1-r\}\subseteq \Delta $. Let us denote the set $\{1,\cdots, n-1-r\}$ by $I_r$. Then \cite[Proposition~13]{SS1991-1} (see also p.88 of loc. cit.) gives an acyclic resolution of $\Sp_r(\Z)$ 
	\[
		0\rightarrow \Z\rightarrow \bigoplus_{\substack{I_r\subseteq I\subseteq \Delta \\ \left|\Delta \bs I\right|=1}} \Ind^{G}_{P_I}\mathbbm{1}\rightarrow \cdots \rightarrow \bigoplus^{}_{\substack{I_r\subseteq I\subseteq \Delta \\ \left|I\bs I_r\right|=1}}\Ind^{G}_{P_I}\mathbbm{1}\rightarrow \Ind^{G}_{P_{I_r}}\mathbbm{1}\rightarrow \Sp_r(\Z)\rightarrow 0
	\] 
	in the category of smooth $\GL_n(\Qp)$-representations with discrete topology. Tensoring this exact sequence with $\Qp$ and condensing it, we get an acyclic resolution of $\underline{\Sp_r(\Qp)}$ 
	\begin{equation}\label{eq-proj_res}
		0\rightarrow \underline{\Qp}\rightarrow \bigoplus_{\substack{I_r\subseteq I\subseteq \Delta \\ \left|\Delta \bs I\right|=1}} \underline{\Ind^{G}_{P_I}\mathbbm{1}}\rightarrow \cdots \rightarrow \bigoplus^{}_{\substack{I_r\subseteq I\subseteq \Delta \\ \left|I\bs I_r\right|=1}}\underline{\Ind^{G}_{P_I}\mathbbm{1}}\rightarrow \underline{\Ind^{G}_{P_{I_r}}\mathbbm{1}}\rightarrow \underline{\Sp_r(\Qp)}\rightarrow 0
	\end{equation}
	in the category $\Rep^{\infty}_{\Ksolid}(G)$. Now apply the functor $\Hom_{\Ksolid[G]}(-, \underline{\mathbbm{1}})$ to (\ref{eq-proj_res}) and pick an injective resolution $0\rightarrow \underline{\mathbbm{1}}\rightarrow \cI^{\bullet}$ in the category $\Rep^{\infty}_{\Ksolid}(G)$, we get a double complex of the form 
	\[
	\begin{tikzcd}[sep=small]
		& \vdots & \vdots & \\
		\cdots\arrow[r] & \Hom_{\Ksolid[G]}(\bigoplus_{\substack{I_r\subseteq I\subseteq \Delta \\ \left|I\bs I_r\right|=i}}\underline{\Ind^{G}_{P_I}\mathbbm{1}}, \cI^{j+1})\arrow[r]\arrow[u] & \Hom_{\Ksolid[G]}(\bigoplus^{}_{\substack{I_r\subseteq I\subseteq \Delta \\ \left|I\bs I_r\right|=i+1}}\underline{\Ind^{G}_{P_I}\mathbbm{1}}, \cI^{j+1})\arrow[r]\arrow[u] & \cdots\\
		\cdots\arrow[r] & \Hom_{\Ksolid[G]}(\bigoplus^{}_{\substack{I_r\subseteq I\subseteq \Delta \\ \left|I\bs I_r\right|=i}}\underline{\Ind^{G}_{P_I}\mathbbm{1}}, \cI^{j})\arrow[r]\arrow[u] & \Hom_{\Ksolid[G]}(\bigoplus^{}_{\substack{I_r\subseteq I\subseteq \Delta \\ \left|I\bs I_r\right|=i+1}}\underline{\Ind^{G}_{P_I}\mathbbm{1}}, \cI^{j})\arrow[r]\arrow[u] & \cdots\\
		& \vdots\arrow[u] & \vdots\arrow[u] & 
	\end{tikzcd}
	\] 
	The $E_1$-page of the spectral sequence associated to such a double complex then gives 
	\begin{equation}\label{eq-E1_vanish}
		E^{i,j}_{1}=\Ext^{j}_{\Ksolid[G]}(\bigoplus^{}_{\substack{I_r\subseteq I\subseteq \Delta \\ \left|I\bs I_r\right|=i}}\underline{\Ind^{G}_{P_I}\mathbbm{1}}, \underline{\mathbbm{1}})\Rightarrow \Ext^{i+j}_{\Ksolid[G]}(\underline{\Sp_r(\Qp)}, \underline{\mathbbm{1}}).
	\end{equation}
	By Lemma \ref{lem-ind_vanishing}, the columns of (\ref{eq-E1_vanish}) are all zero except the $r$-th column, which is $\Ext^{\ast}_{\Ksolid[G]}(\underline{\mathbbm{1}}, \underline{\mathbbm{1}})$. Thus we get  an isomorphism  $\Ext^{\ast}_{\Ksolid[G]}(\underline{\Sp_r(\Qp)}, \underline{\mathbbm{1}})\cong \Ext^{\ast}_{\Ksolid[G]}(\underline{\mathbbm{1}}, \underline{\mathbbm{1}})[-r]$, and this finishes the proof.  
\end{proof}

\printbibliography
\end{document}